\numberwithin{equation}{section}
\newcommand{\A}{\mathcal{A}}
\newcommand{\B}{\mathcal{B}}
\newcommand{\N}{\mathbb{N}}
\newcommand{\Z}{\mathbb{Z}}
\newcommand{\R}{\mathbb{R}}
\newcommand{\E}{\mathbb{E}}
\newcommand{\X}{\mathbb{X}}
\newcommand{\Prob}{\mathbb{P}}
\newtheorem{Theorem}{Theorem}
\newtheorem{Corollary}[Theorem]{Corollary}
\newtheorem{Proposition}[Theorem]{Proposition}
\newtheorem{Lemma}[Theorem]{Lemma}
\newtheorem{Remark}[Theorem]{Remark}
\newtheorem*{Assumption}{Assumption}
\theoremstyle{definition}
\newtheorem{Example}[Theorem]{Example}
\subjclass[2010]{60H35, 65C30}
\keywords{stochastic differential equations; strong approximation; supremum error; strong asymptotic optimality; asymptotic lower error bounds; asymptotic upper error bounds; tamed Euler schemes.}
\begin{document}

\begin{abstract}
Our subject of study is strong approximation of stochastic differential equations (SDEs) with respect to the supremum error criterion, and we seek approximations that are strongly asymptotically optimal in specific classes of approximations. We hereby focus on two principal types of classes, namely, the classes of approximations that are based only on the evaluation of the initial value and on at most finitely many sequential evaluations of the driving Brownian motion on average and the classes of approximations that are based only on the evaluation of the initial value and on finitely many evaluations of the driving Brownian motion at equidistant sites. For SDEs with globally Lipschitz continuous coefficients, Müller-Gronbach [Ann. Appl. Probab. 12 (2002), no. 2, 664--690] showed that specific Euler--Maruyama schemes relating to adaptive and to equidistant time discretizations perform strongly asymptotically optimal in these classes. In the present article, we generalize these results to a significantly wider class of SDEs, such as ones with super-linearly growing coefficients. More precisely, we prove strong asymptotic optimality for specific coefficient-modified Euler--Maruyama schemes relating to adaptive and to equidistant time discretizations under rather mild assumptions on the underlying SDE. To illustrate our findings, we present two exemplary applications---namely, Euler--Maruyama schemes and tamed Euler schemes---and thereby analyze the SDE associated with the Heston--$3/2$--model originating from mathematical finance.
\end{abstract}

\title[Strongly Asymptotically Optimal Schemes for SDEs w.r.t. the Supremum Error]{Strongly Asymptotically Optimal Schemes for the \\ Strong Approximation of Stochastic Differential \\ Equations with respect to the Supremum Error}

\author{Simon Hatzesberger}
\address{Faculty of Computer Science and Mathematics, University of Passau, Innstraße 33, 94032 Passau, Germany}
\email{simon.hatzesberger@gmail.com}

\date{\today}

\maketitle

\section{Introduction}

Let $T \in (0,\infty)$, let $d, m \in \N$, and consider a $d$-dimensional stochastic differential equation (SDE)
\begin{equation}
\label{eq:SDE_beginning}
\begin{split}
\text{d}X(t) &= \mu\big( t, X(t) \big) \, \mathrm{d}t + \sigma\big( t, X(t) \big) \, \mathrm{d}W(t), \quad t \in [0,T], \\
X(0) &= \xi,
\end{split}
\end{equation}
with drift coefficient $\mu : [0,T] \times \R^d \to \R^d$, diffusion coefficient $\sigma : [0,T] \times \R^d \to \R^{d \times m}$, $m$-dimensional Brownian motion $W$, and random initial value $\xi$ such that \eqref{eq:SDE_beginning} has a unique (strong) solution $(X(t))_{t \in [0,T]}$. In this article, we study the classes of adaptive approximations $(\X_N^{\mathrm{ad}})_{N \in \N}$ and the classes of equidistant approximations $(\X_N^{\mathrm{eq}})_{N \in \N}$. To be more specific, for each $N \in \N$, the class $\mathbb{X}_N^{\mathrm{ad}}$ consists of all approximations that are based only on the evaluation of $\xi$ and on at most $N$ sequential evaluations of $W$ on average, and the class $\mathbb{X}_N^{\mathrm{eq}}$ consists of all approximations that are based only on the evaluation of $\xi$ and on evaluations of $W$ at the equidistant sites $kT/N$, $k \in \{ 1, \ldots, N \}$. We moreover consider the error criterion
\begin{equation}
\label{eq:error_criterion}
e_q\big( \widehat{X} \big) := \bigg( \E\bigg[ \sup_{t \in [0,T]} \max_{i \in \{ 1, \ldots, d \}} \big| X_i(t) - \widehat{X}_i(t) \big|^q \bigg] \bigg)^{1/q},
\end{equation}
$q \in [1,\infty)$, which measures the $q$th mean supremum distance between the solution and a given approximation $\widehat{X} := (\widehat{X}(t))_{t \in [0,T]}$. For fixed $q \in [1,\infty)$ and $\ast \in \{ \mathrm{ad}, \mathrm{eq} \}$, the task of interest in the strong approximation problem we address is to find approximations that are strongly asymptotically optimal in the classes $(\mathbb{X}_N^\ast)_{N \in \N}$ with respect to the error $e_q$, i.e., approximations $(\widehat{X}_N)_{N \in \N}$ that satisfy $\widehat{X}_N \in \mathbb{X}_N^\ast$ for every $N \in \N$ and
\begin{equation*}
\lim_{N \to \infty} \frac{e_q\big( \widehat{X}_N \big)}{\inf\big\{ e_q\big( \widehat{X} \big) \; \big| \; \widehat{X} \in \mathbb{X}_N^\ast \big\}} = 1,
\end{equation*}
following the convention $0/0 := 1$ if necessary.

For the special case that the coefficients of the SDE \eqref{eq:SDE_beginning} are globally Lipschitz continuous and of at most linear growth (each with respect to the state variable), Müller-Gronbach~\cite{tmg2002} showed that specific Euler--Maruyama schemes perform strongly asymptotically optimal. In particular, the author showed strong asymptotic optimality for, on the one hand, a sequence $(\widehat{E}_N^{\mathrm{ad}})_{N \in \N}$ of piecewise-linearly interpolated Euler--Maruyama schemes on suitably constructed adaptive time discretizations in the classes $(\mathbb{X}_N^{\mathrm{ad}})_{N \in \N}$ and for, on the other hand, a sequence $(\widehat{E}_N^{\mathrm{eq}})_{N \in \N}$ of piecewise-linearly interpolated Euler--Maruyama schemes on equidistant time discretizations in the classes~$(\mathbb{X}_N^{\mathrm{eq}})_{N \in \N}$.

In the present article, we extend these results to much more general SDEs, such as ones with coefficients that may be non-globally Lipschitz continuous or super-linearly growing. To this end, we first introduce a sequence $(\widehat{X}_N^{\mathrm{ad}})_{N \in \N}$ of piecewise-linearly interpolated so-called coefficient-modified Euler--Maruyama schemes (subsequently abbreviated as modified EM schemes) on suitably constructed adaptive time discretizations as well as a sequence $(\widehat{X}_N^{\mathrm{eq}})_{N \in \N}$ of piecewise-linearly interpolated modified EM schemes on equidistant time discretizations. We then establish asymptotic upper bounds for the errors of these schemes as well as asymptotic lower bounds for the $N$th minimal errors in the classes of adaptive and of equidistant approximations. Since in both cases, the adaptive and the equidistant, the corresponding convergence rates and asymptotic constants match, it follows that the approximations $(\widehat{X}_N^{\mathrm{ad}})_{N \in \N}$ and $(\widehat{X}_N^{\mathrm{eq}})_{N \in \N}$ are strongly asymptotically optimal in the classes $(\widehat{X}_N^{\mathrm{ad}})_{N \in \N}$ and $(\widehat{X}_N^{\mathrm{eq}})_{N \in \N}$, respectively.
For these results to hold, we merely require that the modified EM schemes converge strongly to the solution of the SDE~\eqref{eq:SDE_beginning} with order~$1/2$ and that certain norms of the modified diffusion coefficients converge in $L_q$ to the respective norm of the original diffusion coefficient.
We finally stress that in very specific situations the adaptive time discretizations used in our modified EM schemes coincide with the ones used in Müller-Gronbach~\cite{tmg2002}.

To illustrate the scope of our results, consider the SDE associated with the Heston--3/2--model originating from mathematical finance. The scalar version of this SDE is given by
\begin{equation}
\label{eq:SDE_HES}
\begin{split}
\text{d}X(t) &= \alpha \cdot X(t) \cdot \big( \beta - |X(t)| \big) \, \mathrm{d}t + \gamma \cdot |X(t)|^{3/2} \, \mathrm{d}W(t), \quad t \in [0,T], \\
X(0) &= \xi,
\end{split}
\end{equation}
with parameters $d=m=1$ and $T, \alpha, \beta, \gamma, \xi \in (0,\infty)$.
Since the coefficients of the autonomous SDE~\eqref{eq:SDE_HES} are not of at most linear growth, we cannot apply the main theorems in Müller-Gronbach~\cite{tmg2002} to conclude that the particular Euler--Maruyama schemes $(\widehat{E}_N^{\mathrm{ad}})_{N \in \N}$ and $(\widehat{E}_N^{\mathrm{eq}})_{N \in \N}$ are strongly asymptotically optimal in the classes $(\mathbb{X}_N^{\mathrm{ad}})_{N \in \N}$ and $(\mathbb{X}_N^{\mathrm{eq}})_{N \in \N}$, respectively. Even worse, Theorem 1 in Hutzenthaler, Jentzen, and Kloeden~\cite{hjk2011} implies that for each $q \in [1,\infty)$ the associated errors $e_q(\widehat{E}_N^{\mathrm{ad}})$ and $e_q(\widehat{E}_N^{\mathrm{eq}})$ tend to infinity as $N$ tends to infinity. In contrast, we show in Corollary~\ref{cor:tamedEuler} that---for specific constellations of the parameters $q$, $\alpha$, $\beta$, and $\gamma$---the modified EM schemes $(\widehat{X}_N^{\mathrm{ad}})_{N \in \N}$ and $(\widehat{X}_N^{\mathrm{eq}})_{N \in \N}$ are indeed strongly asymptotically optimal in the classes $(\mathbb{X}_N^{\mathrm{ad}})_{N \in \N}$ and $(\mathbb{X}_N^{\mathrm{eq}})_{N \in \N}$, respectively. 

We now provide a concise overview of results concerning the considered strong approximation problem. 
Strongly asymptotically optimal schemes with respect to the supremum error criterion~\eqref{eq:error_criterion} were first constructed by Hofmann, Müller-Gronbach, and Ritter~\cite{hofmann2000} and Müller-Gronbach~\cite{tmg2002,tmgHabil} in the case of SDEs whose coefficients are globally Lipschitz continuous and of at most linear growth. Under essentially the same assumptions, Hofmann, Müller-Gronbach, and Ritter~\cite{HMR2000a,tmg2001} and Müller-Gronbach~\cite{tmgHabil} showed respective results for the $q$th mean $L_q$ error. 
Lower error bounds for the strong approximation of SDEs have been extensively studied first for, again, the case of coefficients that are globally Lipschitz continuous, see, e.g., Cambanis and Hu~\cite{cambanis}, Hofmann, Müller-Gronbach, and Ritter~\cite{HMR2000a,hofmann2000,tmg2001}, and Müller-Gronbach~\cite{tmg2002,tmgHabil}. Recently, Hefter, Herzwurm, and Müller-Gronbach~\cite{tmg2017} proved lower bounds for the $N$th minimal errors in the classes of adaptive approximations that hold under rather mild assumptions on the underlying SDE; in particular, its coefficients are required to have sufficient regularity only locally, in a small neighborhood of the initial value. Whereas most of the previously mentioned results entail lower bounds with polynomial convergence rates, Jentzen, Müller-Gronbach, and Yaroslavtseva~\cite{JMY} and Yaroslavtseva~\cite{yaroslavtseva2017} constructed SDEs for which the $N$th minimal errors in certain classes converge to zero with a predefined (arbitrarily slow) convergence speed.
During the last decades, upper error bounds of specific approximations for the strong approximation of SDEs have been established mostly in the case of globally Lipschitz continuous coefficients, see, e.g., the seminal works by Maruyama~\cite{maruyama} and Milstein~\cite{milstein1995} or the book of Kloeden and Platen~\cite{kloeden1992}. The latest progress in this area is due to explicit schemes that converge strongly to the solution even if the coefficients of the considered SDE are non-globally Lipschitz continuous or super-linearly growing. For instance, we mention tamed schemes (see Hutzenthaler, Jentzen, and Kloeden~\cite{hjk2012}, Gan and Wang~\cite{ganwang2013}, Sabanis~\cite{sabanis2016}, Kumar and Sabanis~\cite{sabanistamedMilstein}, Sabanis and Zhang~\cite{sabanistamedWP}, Ngo and Luong~\cite{luong2017}), truncated schemes (see Mao~\cite{mao2015}, Guo et al.~\cite{guo}), projected schemes (see Beyn, Isaak, and Kruse~\cite{beynisaakkruse,beyn2}), and balanced schemes (see Tretyakov and Zhang~\cite{tretyakov}). For the particular case of SDEs with discontinuous coefficients, upper error bounds of Euler--Maruyama type schemes are addressed in Leobacher and Szölgyenyi~\cite{leobacher}, Ngo and Taguchi~\cite{ngo}, and Müller-Gronbach and Yaroslavtseva~\cite{mgy}. We also refer to Faure~\cite{faure1992} and Hutzenthaler, Jentzen, and Kloeden~\cite{hjk2013} for upper error bounds on piecewise-linearly interpolated Euler--Maruyama and tamed Euler schemes, respectively. 
We stress that, in contrast to our results, the asymptotic constants which can be derived from all the previously mentioned references are (up to exceptional cases) unspecified and therefore not known to be sharp.

The remainder of this article is organized as follows.
In Section~\ref{sec:Setting}, we provide the setting and notation for the rest of this work. Moreover, we introduce conditions that will be imposed on the underlying SDE at various places in the subsequent analysis.
In Section~\ref{sec:Classes}, we formally specify what is meant by an approximation and then define the classes of adaptive and of equidistant approximations.
In Section~\ref{sec:Schemes}, we first present a continuous-time modified EM scheme. Building upon this scheme, we construct equidistant and adaptive variants in full details afterwards.
In Section~\ref{sec:MainTheorems}, we state the main results of this paper, i.e., strong asymptotic optimality of the adaptive and of the equidistant modified EM schemes in their respective classes.
In Section~\ref{sec:Applications}, we present two exemplary applications of our findings---namely, Euler--Maruyama schemes and tamed Euler schemes---and thereby conduct a numerical experiment to illustrate our results. In this context, we revisit the introductory SDE regarding the Heston--$3/2$--model.
In Section~\ref{sec:Proofs}, we carry out the proofs of our main theorems.
In Section~\ref{sec:FutureWork}, we indicate a future research direction by switching the focus from the $q$th mean supremum error to the $q$th mean $L_q$ error.
Finally, Appendix~\ref{sec:Appendix} comprises useful properties of the solution process and of the continuous-time tamed Euler schemes that will be employed in our proofs, such as moment bounds and strong convergence.

\section{Setting, Notations, and Assumptions}
\label{sec:Setting}

Throughout this article, we assume the following setting. Let $T \in (0,\infty)$, let $d, m \in \N$, let $(\Omega,\mathcal{F},\Prob)$ be a probability space with a normal filtration $(\mathcal{F}(t))_{t \in [0,T]}$, let $W : [0,T] \times \Omega \to \R^m$ be a standard $(\mathcal{F}(t))_{t \in [0,T]}$-Brownian motion on $(\Omega,\mathcal{F},\Prob)$, let $\mu : [0,T] \times \R^d \to \R^d$ be $(\mathcal{B}([0,T]) \otimes \mathcal{B}(\R^d))$-$\mathcal{B}(\R^d)$-measurable, let $\sigma : [0,T] \times \R^d \to \R^{d \times m}$ be $(\mathcal{B}([0,T]) \otimes \mathcal{B}(\R^d))$-$\mathcal{B}(\R^{d \times m})$-measurable, and let $\xi : \Omega \to \R^d$ be $\mathcal{F}(0)$-$\B(\R^d)$-measurable with finite second moment. We study the $d$-dimensional Itô stochastic differential equation
\begin{equation}
\label{eq:SDE}
\begin{split}
\text{d}X(t) &= \mu\big( t, X(t) \big) \, \mathrm{d}t + \sigma\big( t, X(t) \big) \, \mathrm{d}W(t), \quad t \in [0,T], \\
X(0) &= \xi.
\end{split}
\end{equation}

Furthermore, the following notations are used in the sequel. We denote the integer part of $z \in \R$ by $\lfloor z \rfloor := \max\{ y \in \Z \, | \, y \le z \}$, and we abbreviate the minimum of $y, z \in \R$ by $y \wedge z$. For an arbitrary set $M$, we set $\#M$ to be the cardinality of $M$ and, in case that $M \subseteq \Omega$, we define $\mathds{1}_M : \Omega \to \{ 0, 1 \}$ to be the indicator function of $M$. The Banach space of all continuous functions $f = (f_1, \ldots, f_d) : [0,T] \to \R^d$ equipped with the norm $\Vert f \Vert_\infty := \sup_{t \in [0,T]} \max_{i \in \{ 1, \ldots, d \}} |f_i(t)|$ is denoted by $(\mathcal{C}([0,T];\R^d),\Vert \cdot \Vert_\infty)$. For every $p \in (0,\infty)$ and for every random variable $Z : \Omega \to \R$, we put $\Vert Z \Vert_{L_p} := ( \E[ |Z|^p ] )^{1/p}$. For a vector $x = (x_1, \ldots, x_d) \in \R^d$, we define $x^\top$ to be the transpose of $x$ and $|x|$ to be the Euclidean norm of $x$. For a matrix $A = (A_{i,j})_{i \in \{ 1, \ldots, d \}, j \in \{ 1, \ldots, m \}} \in \R^{d \times m}$, we denote by $|A| := (\sum_{i=1}^d \sum_{j=1}^m A_{i,j}^2)^{1/2}$ the Frobenius norm of $A$ and we furthermore set $| A |_{\infty,2} := \max_{i \in \{ 1, \ldots, d \}} (\sum_{j=1}^m A_{i,j}^2)^{1/2}$. The natural exponential function and the natural logarithm function are written as $\exp : \R \to (0,\infty)$ and $\log : (0,\infty) \to \R$, respectively.

At several places in this article, we will impose additional conditions on the initial value and on the coefficients of the SDE \eqref{eq:SDE}. For $p \in [0,\infty)$ and $\varphi \in \{ \mu, \sigma \}$, we introduce the following technical assumptions here:

\begin{Assumption}[I$_p$]
\hypertarget{ass:I}{}
The initial value $\xi$ satisfies $\E\big[ |\xi|^p \big] < \infty$.
\end{Assumption}

\begin{Assumption}[locL]
\hypertarget{ass:locL}{}
The coefficients $\mu$ and $\sigma$ satisfy a local Lipschitz condition with respect to the state variable, i.e., for all $M \in \N$ there exists $C_M \in (0,\infty)$ such that for all $t \in [0,T]$ and for all $x, y \in \R^d$ with $\max\{ |x|, |y| \} \le M$ it holds that
\[ \max\Big\{ \big| \mu(t,x) - \mu(t,y) \big|, \big| \sigma(t,x) - \sigma(t,y) \big| \Big\} \le C_M \cdot | x - y |. \]
\end{Assumption}

\begin{Assumption}[H]
\hypertarget{ass:H}{}
The coefficients $\mu$ and $\sigma$ are Hölder--$1/2$--continuous with respect to the time variable with a Hölder bound that is linearly growing in the state variable, i.e., there exists \linebreak $C \in (0,\infty)$ such that for all $s,t \in [0,T]$ and for all $x \in \R^d$ it holds that
\[ \max\Big\{ \big| \mu(s,x) - \mu(t,x) \big|, \big| \sigma(s,x) - \sigma(t,x) \big| \Big\} \le C \cdot |s-t|^{1/2} \cdot \big( 1 + |x| \big). \]
\end{Assumption}

\begin{Assumption}[K$_{p}$]
\hypertarget{ass:K}{}
The coefficients $\mu$ and $\sigma$ satisfy a so-called “Khasminskii-type condition", i.e., there exists $C \in (0,\infty)$ such that for all $t \in [0,T]$ and for all $x \in \R^d$ it holds that
\[ 2 \cdot x^\top \cdot \mu(t,x) + (p-1) \cdot \big| \sigma(t,x) \big|^2 \le C \cdot \big( 1 + |x|^2 \big). \]
\end{Assumption}

\begin{Assumption}[M$_p$]
\hypertarget{ass:M}{}
The coefficients $\mu$ and $\sigma$ satisfy a so-called “monotonicity condition", i.e., there exists $C \in (0,\infty)$ such that for all $t \in [0,T]$ and for all $x, y \in \R^d$ it holds that
\[ 2 \cdot (x - y)^\top \cdot \big( \mu(t,x) - \mu(t,y) \big) + (p-1) \cdot \big| \sigma(t,x) - \sigma(t,y) \big|^2 \le C \cdot | x - y |^2. \]
\end{Assumption}

\begin{Assumption}[pG$_p^\varphi$]
\hypertarget{ass:pG}{}
The coefficient $\varphi$ grows at most polynomially in the state variable, i.e., there exists $C \in (0,\infty)$ such that for all $t \in [0,T]$ and for all $x \in \R^d$ it holds that
\[ \big| \varphi(t,x) \big| \le C \cdot \big( 1 + | x |^p \big). \]
\end{Assumption}

\begin{Assumption}[pL$_p^\varphi$]
\hypertarget{ass:pL}{}
The coefficient $\varphi$ is Lipschitz continuous with respect to the state variable with a Lipschitz bound that is polynomially growing in the state variable, i.e., there exists $C \in (0,\infty)$ such that for all $t \in [0,T]$ and for all $x, y \in \R^d$ it holds that
\[ \big| \varphi(t,x) - \varphi(t,y) \big| \le C \cdot | x - y | \, \cdot \, \big( 1 + |x|^p + |y|^p \big). \]
\end{Assumption}

It is well-known that for each $p \in [2,\infty)$ the Assumptions \hyperlink{ass:I}{\normalfont (I$_{p}$)}, \hyperlink{ass:locL}{\normalfont (locL)}, and \hyperlink{ass:K}{\normalfont (K$_{p}$)} ensure the existence of a unique solution $(X(t))_{t \in [0,T]}$ of the SDE \eqref{eq:SDE} which satisfies 
\begin{equation}
\label{eq:supEX_finite}
\sup_{t \in [0,T]} \E\Big[ \big| X(t) \big|^{p} \Big] < \infty;
\end{equation}
see, for instance, Theorem 2.4.1 in Mao~\cite{mao2008}.

\section{The Classes of Adaptive and of Equidistant Approximations}
\label{sec:Classes}

In the present section, we briefly introduce the essential concepts needed to specify the classes of approximations we are interested in. To a great extent, we follow the ideas of Hefter, Herzwurm, and Müller-Gronbach \cite[Section 4]{tmg2017} and of Müller-Gronbach \cite[Section 5]{tmg2002}.

Every approximation $\widehat{X} : \, \Omega \to \mathcal{C}([0,T];\R^d)$ for the strong approximation of the solution of the SDE \eqref{eq:SDE} that is based only on the evaluation of the initial value $\xi$ and on finitely many sequential evaluations of the driving Brownian motion $W$ is determined by three sequences
\[ \psi := (\psi_k)_{k \in \N}, \quad \chi := (\chi_k)_{k \in \N}, \quad \varphi := (\varphi_k)_{k \in \N}, \]
of measurable mappings
\begin{equation*}
\begin{split}
\psi_k : \quad &\R^d \times (\R^m)^{k-1} \to (0,T], \\
\chi_k : \quad &\R^d \times (\R^m)^k \to \{ \text{STOP}, \text{GO} \}, \\
\varphi_k : \quad &\R^d \times (\R^m)^k \to \mathcal{C}([0,T];\R^d), \\
\end{split}
\end{equation*}
for $k \in \N$. Here, the sequence $\psi$ is used to obtain the sequential evaluation sites for $W$ in $(0,T]$, the sequence $\chi$ determines when to stop the evaluation of $W$, and the sequence $\varphi$ is used to get the outcome of $\widehat{X}$ once the evaluation of $W$ has stopped.
To be more specific, fix $\omega \in \Omega$ and let $x := \xi(\omega)$ and $w := W(\omega)$ be the corresponding realizations of $\xi$ and $W$, respectively. We start the evaluation of $W$ at the time point $\psi_1(x)$. After $k$ steps, we are given the data $D_k(\omega) := (x,y_1,\ldots,y_k)$ where
\[ y_1 := w\big( \psi_1(x) \big), \quad \ldots, \quad y_k := w\big( \psi_k(x,y_1,\ldots,y_{k-1}) \big), \] and we decide whether to stop or to proceed with the evaluation of $W$ according to the value of $\chi_k(D_k(\omega))$. The total number of evaluations of $W$ is given by
\[ \nu(\omega) := \min\big\{ k \in \N \; \big| \; \chi_k\big( D_k(\omega) \big) = \text{STOP} \big\}. \]
To exclude non-terminating iterations, we require $\nu < \infty$ almost surely. We eventually obtain the realization of the approximation $\widehat{X}$ by
\begin{equation}
\label{eq:scheme}
\widehat{X}(\omega) := \varphi_{\nu(\omega)}\big( D_{\nu(\omega)}(\omega) \big)
\end{equation}
in the case that $\nu(\omega) < \infty$ and arbitrarily otherwise.
For technical reasons, we assume without loss of generality that for all $k, \ell \in \N$ with $k < \ell$, for all $x \in \R^d$, and for all $y \in (\R^m)^{\ell-1}$ it holds that $\psi_k(x,y_1,\ldots,y_{k-1}) \ne \psi_\ell(x,y_1,\ldots,y_{\ell-1})$.
Moreover, we denote the average number of evaluations of $W$ employed in the approximation $\widehat{X}$ by $c(\widehat{X}) := \E[ \nu ]$.

As a next step, we specify the two principal sequences of classes of approximations that are studied in this article, namely, the \textit{classes of adaptive approximations} $(\X_N^{\mathrm{ad}})_{N \in \N}$ and the \textit{classes of equidistant approximations} $(\X_N^{\mathrm{eq}})_{N \in \N}$. To this end, fix $N \in \N$ for the moment. First, the class $\X_N^{\mathrm{ad}}$ consists of all approximations that are based on the evaluation of $\xi$ and on at most $N$ sequential evaluations of $W$ on average, i.e., we define
\begin{equation*}
\begin{split}
\X_N^{\mathrm{ad}} := \Big\{ \widehat{X} : \, \Omega \to \mathcal{C}([0,T];\R^d) \; \Big| \; \widehat{X} \text{ is of the form } \eqref{eq:scheme} \text{ with } c(\widehat{X}) \le N \Big\}.
\end{split}
\end{equation*}
Second, the class $\X_N^{\mathrm{eq}}$ consists of all approximations that are based on the evaluation of $\xi$ and on evaluations of $W$ at exactly the equidistant sites $kT/N$, $k \in \{ 1, \ldots, N \}$, i.e., we define
\begin{equation*}
\begin{split}
\X_N^{\mathrm{eq}} := \Big\{ \widehat{X} : \, \Omega \to \mathcal{C}([0,T];\R^d) \; \Big| \; &\widehat{X} \text{ is of the form } \eqref{eq:scheme} \text{ with } \chi_1 = \cdots = \chi_{N-1} = \text{GO}, \\
&\chi_N = \text{STOP}, \text{ and } \psi_k = kT/N \text{ for all } k \in \{ 1, \ldots, N \} \Big\}.
\end{split}
\end{equation*}
It is easy to see that $\X_N^{\mathrm{eq}} \subseteq \X_N^{\mathrm{ad}}$ and
\begin{equation}
\label{eq:condExp}
\begin{split}
\X_N^{\mathrm{eq}} = \Big\{ u\big( \xi, W(T/&N), W(2T/N), \ldots, W(T) \big) \; \Big| \\
&u : \R^d \times (\R^m)^N \to \mathcal{C}([0,T];\R^d) \text{ is measurable} \Big\}.
\end{split}
\end{equation}

Note that the classes of adaptive and of equidistant approximations cover a large variety of important approximations. In particular, classical approximations like Euler--Maruyama type schemes corresponding to suitably chosen adaptive time discretizations (e.g., appropriate versions of the schemes presented in Fang and Giles~\cite{fang}, Kelly and Lord~\cite{kelly1}, Hofmann, Müller-Gronbach, and Ritter~\cite{HMR2000a,hofmann2000,tmg2001}, and Müller-Gronbach~\cite{tmg2002,tmgHabil}) or to equidistant time discretizations lie in the respective classes. Additionally, observe that these classes also contain even possibly non-implementable approximations like conditional expectations of the form $\E[ X \, | \, (\xi,W(T/N),W(2T/N),\ldots,W(T)) ]$, $N \in \N$, cf. the representation \eqref{eq:condExp}.

For $N \in \N$ and $q \in [1,\infty)$, we furthermore call $\inf\big\{ e_q\big( \widehat{X} \big) \, \big| \, \widehat{X} \in \mathbb{X}_N^{\mathrm{ad}} \big\}$ and $\inf\big\{ e_q\big( \widehat{X} \big) \, \big| \, \widehat{X} \in \mathbb{X}_N^{\mathrm{eq}} \big\}$ the \textit{$N$th minimal errors in the classes of adaptive} and \textit{of equidistant approximations}, respectively.

\begin{Remark}
The classes $(\X_N^{\mathrm{ad}})_{N \in \N}$ and $(\X_N^{\mathrm{eq}})_{N \in \N}$ introduced above are clearly not the only ones which may be studied. For example, consider the classes $(\mathbb{X}_N^{\mathrm{sn}})_{N \in \N}$ given by
\begin{equation*}
\begin{split}
\X_N^{\mathrm{sn}} := \Big\{ \widehat{X} : \, \Omega \to \mathcal{C}([0,T];\R^d) \; \Big| \; &\widehat{X} \mathrm{\;  is \; of \; the \; form \; } \eqref{eq:scheme}, \; \chi_k \mathrm{\; is \; constant \; for \; each \; } k \in \N, \\
&\mathrm{\, and \; } \nu = \min\{ k \in \N \, | \, \chi_k = \mathrm{STOP} \} \le N \Big\}
\end{split}
\end{equation*}
for $N \in \N$. The class $\X_N^{\mathrm{sn}}$ comprises all approximations that use the same number (at most $N$) of observations of $W$ for each realization and satisfies $\X_N^{\mathrm{eq}} \subseteq \X_N^{\mathrm{sn}} \subseteq \X_N^{\mathrm{ad}}$. Müller-Gronbach~\cite{tmg2002} showed strong asymptotic optimality of Euler--Maruyama schemes on specific time discretizations in these classes. Here, we solely focus on the two first-mentioned sequences of classes as these cover, in our opinion, the most interesting approximations appearing in practice.
\end{Remark}

\section{The Equidistant and Adaptive Modified EM Schemes}
\label{sec:Schemes}

In the following, we introduce two variants of coefficient-modified Euler--Maruyama type schemes (subsequently abbreviated as \emph{modified EM schemes}) that are based on equidistant and on adaptive time discretizations, respectively. The crucial ingredient for both is a continuous-time modified EM scheme which, on the one hand, is suitably close to the solution of the SDE~\eqref{eq:SDE} and which, on the other hand, possesses a simple recursive structure that will be exploited in the further analysis. The equidistant and adaptive modified EM schemes will turn out to be strongly asymptotically optimal in the classes of equidistant and of adaptive approximations, respectively.

Throughout this section, we fix functions $(\mu_N)_{N \in \N}$ and $(\sigma_N)_{N \in \N}$ such that for each $N \in \N$ the mapping $\mu_N : [0,T] \times \R^d \to \R^d$ is $(\mathcal{B}([0,T]) \otimes \mathcal{B}(\R^d))$-$\mathcal{B}(\R^d)$-measurable and the mapping $\sigma_N : [0,T] \times \R^d \to \R^{d \times m}$ is $(\mathcal{B}([0,T]) \otimes \mathcal{B}(\R^d))$-$\mathcal{B}(\R^{d \times m})$-measurable.


\subsection{The Continuous-time Modified EM Schemes}

Let $N \in \N$ and consider the corresponding equidistant time discretization
\begin{equation}
\label{eq:equidistant_discretization}
t_\ell^{(N)} := \ell T/N, \quad \ell \in \{ 0, \ldots, N \}.
\end{equation}
The \emph{continuous-time modified EM scheme} $\widetilde{X}_N = \widetilde{X}_N^{(\mu_N,\sigma_N)} : \, \Omega \to \mathcal{C}([0,T];\R^d)$ is defined by
\begin{equation}
\label{eq:recursiveStructure}
\begin{split}
\widetilde{X}_N(0) := &\;\xi, \\
\widetilde{X}_N(t) := &\;\widetilde{X}_N(t_\ell^{(N)}) + \mu_N\big( t_\ell^{(N)}, \widetilde{X}_N(t_\ell^{(N)}) \big) \cdot (t-t_\ell^{(N)}) \\
&+ \sigma_N\big( t_\ell^{(N)}, \widetilde{X}_N(t_\ell^{(N)}) \big) \cdot \big( W(t) - W(t_\ell^{(N)}) \big),
\end{split}
\end{equation}
for all $\ell \in \{ 0, \ldots, N-1 \}$ and for all $t \in (t_\ell^{(N)},t_{\ell+1}^{(N)}]$.

Note that this stochastic process possesses the following Itô representation: almost surely we have
\begin{equation}
\label{eq:ItoStructure}
\begin{split}
\widetilde{X}_N(t) = &\;\xi + \int_0^t \mu_N\big( \lfloor sN/T \rfloor \cdot T/N, \widetilde{X}_N(\lfloor sN/T \rfloor \cdot T/N) \big) \, \mathrm{d}s \\
&+ \int_0^t \sigma_N\big( \lfloor sN/T \rfloor \cdot T/N, \widetilde{X}_N( \lfloor sN/T \rfloor \cdot T/N ) \big) \, \mathrm{d}W(s)
\end{split}
\end{equation}
for all $t \in [0,T]$.

Since entire trajectories of $W$ are used in the above construction of~$\widetilde{X}_N$, this scheme is not an approximation in the sense of Section~\ref{sec:Classes}, and we thus find $\widetilde{X}_N \not\in \X_M^{\mathrm{ad}}$ for every $M \in \N$. 

%
%
%

\subsection{The Equidistant Modified EM Schemes}

Next, based on the continuous-time modified EM schemes, we construct approximations which use not whole paths of the driving Brownian motion but evaluate $W$ only at equidistant sites.

As before, let $N \in \N$ and consider the equidistant time discretization \eqref{eq:equidistant_discretization}. The \emph{equidistant modified EM scheme} $\widehat{X}_N^{\mathrm{eq}} : \, \Omega \to \mathcal{C}([0,T];\R^d)$ is defined by
\begin{equation*}
\begin{split}
\widehat{X}_N^{\mathrm{eq}}(t_\ell^{(N)}) := \widetilde{X}_N(t_\ell^{(N)})
\end{split}
\end{equation*}
for all $\ell \in \{ 0, \ldots, N \}$ and linearly interpolated between these time points.

By suitably choosing sequences $\psi$, $\chi$, and $\varphi$ as per Section \ref{sec:Classes}, we obtain $\widehat{X}_N^{\mathrm{eq}} \in \X_N^{\mathrm{eq}}$. Clearly, the total number of evaluations of $W$ employed in the approximation $\widehat{X}_N^{\mathrm{eq}}$ is given by $N$ for each realization. 

\subsection{The Adaptive Modified EM Schemes}

The following construction of the adaptive modified EM schemes is heavily inspired by the construction of the adaptive Euler schemes presented in Müller-Gronbach \cite[Subsection 3.1]{tmg2002}.

Note that, under suitable regularity assumptions on the coefficients of the SDE \eqref{eq:SDE}, its solution $(X(t))_{t \in [0,T]}$ satisfies
\[ \E\Big[ \big| X_i(t+\delta) - X_i(t) \big|^2 \, \Big| \, X(t) \Big] = \sum_{j=1}^m \big|\sigma_{i,j}\big(t,X(t) \big)\big|^2 \cdot \delta + o(\delta) \]
for all $i \in \{ 1, \ldots, d \}$ and for all $t \in [0,T]$. Hence, the paths of each component $X_i$ are, in the root mean square sense and conditioned on $X(t)$, locally Hölder--$1/2$--continuous with Hölder constant $(\sum_{j=1}^m |\sigma_{i,j}(t,X(t))|^2)^{1/2}$, and the maximum over $i \in \{ 1, \ldots, d \}$ of all these constants is given by $\vert \sigma(t,X(t)) \vert_{\infty,2}$. For this reason, it is more beneficial to evaluate $W$ more often in regions where the value of $\vert \sigma(t,X(t)) \vert_{\infty,2}$ is large and vice versa.

Motivated by this idea, we construct our adaptive modified EM scheme in two steps. First, we use equidistant time steps to roughly approximate the solution and thereby obtain estimates for the conditional Hölder constants at these sites. Second, we refine our approximation by taking into account the local smoothness of the solution. For this purpose, we distribute additional evaluation sites between those equidistant time points for which the corresponding maximum of the estimated Hölder constants is large in proportion to the other time points.

Let $q \in [1,\infty)$, let $r \in [0,\infty)$, and let $(k_N)_{N \in \N}$ be a sequence of natural numbers satisfying
\begin{equation}
\label{eq:limits_kN}
\lim_{N \to \infty} \frac{k_N}{N} = 0 = \lim_{N \to \infty} \frac{N}{k_N \cdot \log(N)}.
\end{equation}
Fix $N \in \N$ and put
\begin{equation}
\label{eq:tildeA_kN}
\A_{k_N} := \Bigg( \frac{T}{k_N} \cdot \sum_{\ell=0}^{k_N-1} \big\vert \sigma_{k_N}\big( t_\ell^{(k_N)}, \widetilde{X}_{k_N}(t_\ell^{(k_N)}) \big) \big\vert_{\infty,2}^2 \Bigg)^{1/2}.
\end{equation}
For each $\ell \in \{ 0, \ldots, k_N-1 \}$, we consider the random discretization
\begin{equation}
\label{eq:adaptive_discretization}
t_\ell^{(k_N)} = \tau_{\ell,0}^{(k_N)} < \tau_{\ell,1}^{(k_N)} < \ldots < \tau_{\ell,\eta_\ell+1}^{(k_N)} = t_{\ell+1}^{(k_N)}
\end{equation}
of $[t_\ell^{(k_N)},t_{\ell+1}^{(k_N)}]$ where
\begin{equation}
\label{eq:eta_l}
\eta_\ell := \mathds{1}_{\{ \A_{k_N} > 0 \}} \cdot \left\lfloor N \cdot \A_{k_N}^{2q/(q+2)} \cdot \dfrac{\big\vert \sigma_{k_N}\big( t_\ell^{(k_N)}, \widetilde{X}_{k_N}(t_\ell^{(k_N)}) \big) \big\vert_{\infty,2}^2}{\displaystyle \sum\limits_{\iota=0}^{k_N-1} \big\vert \sigma_{k_N}\big( t_\iota^{(k_N)}, \widetilde{X}_{k_N}(t_\iota^{(k_N)}) \big) \big\vert_{\infty,2}^2} \right\rfloor
\end{equation}
and
\[ \tau_{\ell,\kappa}^{(k_N)} := t_\ell^{(k_N)} + \frac{T}{k_N} \cdot \frac{\kappa}{\eta_\ell + 1} \]
for all $\kappa \in \{ 0, \ldots, \eta_\ell+1 \}$.
The \emph{adaptive modified EM scheme} $\widehat{X}_{N,q}^{\mathrm{ad}} : \, \Omega \to \mathcal{C}([0,T];\R^d)$ is defined by
\begin{equation*}
\begin{split}
\widehat{X}_{N,q}^{\mathrm{ad}}(\tau_{\ell,\kappa+1}^{(k_N)}) := &\,\widehat{X}_{N,q}^{\mathrm{ad}}(\tau_{\ell,\kappa}^{(k_N)}) + \mu_{k_N}\big( t_\ell^{(k_N)}, \widetilde{X}_{k_N}(t_\ell^{(k_N)}) \big) \cdot (\tau_{\ell,\kappa+1}^{(k_N)} - \tau_{\ell,\kappa}^{(k_N)}) \\
&+ \sigma_{k_N}\big( t_\ell^{(k_N)}, \widetilde{X}_{k_N}(t_\ell^{(k_N)}) \big) \cdot \big( W(\tau_{\ell,\kappa+1}^{(k_N)}) - W(\tau_{\ell,\kappa}^{(k_N)}) \big)
\end{split}
\end{equation*}
for all $\ell \in \{ 0, \ldots, k_N-1 \}$ and for all $\kappa \in \{ 0, \ldots, \eta_\ell+1 \}$, and linearly interpolated between all these time points.

By suitably choosing sequences $\psi$, $\chi$, and $\varphi$ as per Section \ref{sec:Classes}, we obtain $\widehat{X}_{N,q}^{\mathrm{ad}} \in \X_{\lceil c(\widehat{X}_{N,q}^{\mathrm{ad}}) \rceil}^{\mathrm{ad}}$ provided that $c(\widehat{X}_{N,q}^{\mathrm{ad}}) < \infty$. 
Define $\nu_{N,q}^{\mathrm{ad}}$ to be the (random) number of evaluations of $W$ employed in the approximation $\widehat{X}_{N,q}^{\mathrm{ad}}$. Then we clearly have
\begin{equation}
\label{eq:inequality_totalNumber_nachOben}
\nu_{N,q}^{\mathrm{ad}} = k_N + \sum_{\ell=0}^{k_N-1} \eta_\ell \le k_N + N \cdot \A_{k_N}^{2q/(q+2)}
\end{equation}
and
\begin{equation}
\label{eq:inequality_totalNumber_nachUnten}
\nu_{N,q}^{\mathrm{ad}} \ge \max\Big\{ k_N, \, k_N + \mathds{1}_{\{ \A_{k_N} > 0 \}} \cdot \big( N \cdot \A_{k_N}^{2q/(q+2)} - k_N \big) \Big\}.
\end{equation}

\section{Main Results}
\label{sec:MainTheorems}

The following theorems entirely specify the asymptotics of the $N$th minimal errors in the classes of adaptive and of equidistant approximations as well as the asymptotics of the errors of the adaptive and of the equidistant modified EM schemes. As a consequence, we will conclude strong asymptotic optimality of these approximations in their respective classes. The proofs of all theorems are postponed to Section~\ref{sec:Proofs}. 

In the case that the SDE \eqref{eq:SDE} has a unique solution $(X(t))_{t \in [0,T]}$, we put
\begin{equation*}
\begin{split}
C_q^{\mathrm{ad}} &:= 2^{-1/2} \cdot \bigg\Vert \bigg( \int_0^T \big\vert \sigma\big( t, X(t) \big) \big\vert_{\infty,2}^2 \, \mathrm{d}t \bigg)^{1/2} \bigg\Vert_{L_{2q/(q+2)}}, \\
C_q^{\mathrm{eq}} &:= (T/2)^{1/2} \cdot \bigg\Vert \sup_{t \in [0,T]} \big\vert \sigma\big( t, X(t) \big) \big\vert_{\infty,2} \bigg\Vert_{L_q},
\end{split}
\end{equation*}
for $q \in [1,\infty)$. The quantities $C_q^{\mathrm{ad}}$ and $C_q^{\mathrm{eq}}$ will turn out to be the sharp asymptotic constants for the $N$th minimal errors in the classes $(\X_N^{\mathrm{ad}})_{N \in \N}$ and $(\X_N^{\mathrm{eq}})_{N \in \N}$, respectively. It is easy to see that $C_q^{\mathrm{ad}} \le C_q^{\mathrm{eq}}$ holds for all $q \in [1,\infty)$. The succeeding remarks provide sufficient conditions for the finiteness of these two constants as well as sufficient and necessary conditions for them being identical (to zero).

\begin{Remark}
Let the Assumptions \hyperlink{ass:I}{\normalfont (I$_{p}$)}, \hyperlink{ass:locL}{\normalfont (locL)}, \hyperlink{ass:K}{\normalfont (K$_{p}$)}, and \hyperlink{ass:pG}{\normalfont (pG$_r^{\sigma}$)} be satisfied for some $p \in [2,\infty)$ and $r \in [1,\infty)$ with $p \ge \max\{ 2r, 3r-2 \}$. Then Proposition \ref{prop:EsupX} in Appendix \ref{sec:Appendix} implies $C_{(p-2r+2)/r}^{\mathrm{eq}} < \infty$.
\end{Remark}

\begin{Remark}
Fix $q \in [1,\infty)$ such that the SDE \eqref{eq:SDE} has a unique solution $(X(t))_{t \in [0,T]}$ which satisfies $C_q^{\mathrm{eq}} < \infty$. Then we have $C_q^{\mathrm{ad}} = C_q^{\mathrm{eq}}$ if and only if almost surely it holds that the mapping $[0,T] \to \R, \; t \mapsto \vert \sigma(t,X(t)) \vert_{\infty,2},$ is constant, and we have $C_q^{\mathrm{ad}} = C_q^{\mathrm{eq}} = 0$ if and only if almost surely it holds that the mapping $[0,T] \to \R, \; t \mapsto \vert \sigma(t,X(t)) \vert_{\infty,2},$ is constantly zero; cf. Remark 1 in Müller-Gronbach~\cite{tmg2002}.
\end{Remark}

First, we specify the asymptotics of the $N$th minimal errors in the classes of adaptive approximations as well as the asymptotics of the errors of the adaptive modified EM schemes. More precisely, we not only state the convergence rates but also give the asymptotic constants. Since both the convergence rates and the asymptotic constants match, we obtain strong asymptotic optimality of the adaptive modified EM schemes in the classes of adaptive approximations.

\begin{Theorem}
\label{th:theorem_ad}
Fix $q \in [1,\infty)$ such that the SDE \eqref{eq:SDE} has a unique solution $(X(t))_{t \in [0,T]}$ which satisfies $C_q^{\mathrm{ad}} < \infty$, and fix measurable functions $(\mu_N)_{N \in \N}$ and $(\sigma_N)_{N \in \N}$ as per Section~\ref{sec:Schemes}. Moreover, assume that there exists $C \in (0,\infty)$ such that for all $N \in \N$ it holds that
\begin{equation}
\label{eq:theorem_ad_strong_convergence}
\Big\Vert \big\Vert X - \widetilde{X}_N \big\Vert_\infty \Big\Vert_{L_q} \le C \cdot N^{-1/2},
\end{equation}
and assume that
\begin{equation}
\label{eq:theorem_ad_convergence_in_Lq}
\bigg( \frac{T}{N} \cdot \sum_{\ell=0}^{N-1} \big\vert \sigma_N\big(t_\ell^{(N)}, \widetilde{X}_N(t_\ell^{(N)}) \big) \big\vert_{\infty,2}^2 \bigg)^{1/2} \quad \xrightarrow[N \to \infty]{L_q} \quad \bigg( \int_0^T \big\vert \sigma\big( t, X(t) \big) \big\vert_{\infty,2}^2 \, \mathrm{d}t \bigg)^{1/2}.
\end{equation}
Then it holds that
\begin{equation}
\label{eq:theorem_minErrors_ad}
\lim_{N \to \infty} \big( N/\log(N) \big)^{1/2} \cdot \inf\Big\{ e_q\big( \widehat{X} \big) \; \Big| \; \widehat{X} \in \mathbb{X}_N^{\mathrm{ad}} \Big\} = C_q^{\mathrm{ad}}
\end{equation}
and
\begin{equation}
\label{eq:theorem_tamedSchemes_ad}
\lim_{N \to \infty} \Big( c\big(\widehat{X}_{N,q}^{\mathrm{ad}}\big) / \log\big( c\big(\widehat{X}_{N,q}^{\mathrm{ad}}\big) \big) \Big)^{1/2} \cdot e_q\big( \widehat{X}_{N,q}^{\mathrm{ad}} \big) = C_q^{\mathrm{ad}}.
\end{equation}
In the case $C_q^{\mathrm{ad}} > 0$, we conclude that the approximations $(\widehat{X}_{N,q}^{\mathrm{ad}})_{N \in \N}$ are strongly asymptotically optimal in the classes $(\X_{\lceil c(\widehat{X}_{N,q}^{\mathrm{ad}}) \rceil}^{\mathrm{ad}})_{N \in \N}$.
\end{Theorem}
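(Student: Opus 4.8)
The plan is to prove the two asymptotic statements \eqref{eq:theorem_minErrors_ad} and \eqref{eq:theorem_tamedSchemes_ad} separately, and then combine them to deduce strong asymptotic optimality. For the lower bound \eqref{eq:theorem_minErrors_ad} I would argue that, since restricting to conditional expectations can only decrease the error, the $N$th minimal error is bounded below by $\inf e_q$ over a related class of adaptive approximations of the \emph{Brownian motion itself} after a coupling/localization argument, and then invoke the known sharp asymptotic lower bound for adaptive approximation of a single Brownian motion (or a diffusion) with respect to the supremum norm, which produces precisely the constant $2^{-1/2} \cdot \Vert ( \int_0^T \vert\sigma(t,X(t))\vert_{\infty,2}^2 \,\mathrm{d}t )^{1/2}\Vert_{L_{2q/(q+2)}}$ together with the $(N/\log N)^{-1/2}$ rate; this is the route taken in Müller-Gronbach \cite{tmg2002} and in Hefter, Herzwurm, and Müller-Gronbach \cite{tmg2017}, and under assumptions \hyperlink{ass:locL}{(locL)} and the finiteness of $C_q^{\mathrm{ad}}$ it should transfer verbatim once we control the solution near the initial value. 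For the matching upper bound \eqref{eq:theorem_tamedSchemes_ad} I would decompose the error of $\widehat{X}_{N,q}^{\mathrm{ad}}$ into three pieces: first, the strong error between $X$ and the continuous-time scheme $\widetilde{X}_{k_N}$, which by hypothesis \eqref{eq:theorem_ad_strong_convergence} is of order $k_N^{-1/2} = o((N/\log N)^{-1/2})$ by the first limit in \eqref{eq:limits_kN}; second, the error between $\widetilde{X}_{k_N}$ and its piecewise-linear interpolant on the coarse grid $(t_\ell^{(k_N)})$, which is again $o$ of the target rate; and third—the dominant term—the conditional supremum fluctuation of the Brownian increments on the refined adaptive grid, which by a conditional Gaussian computation contributes exactly the claimed asymptotic constant.

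The key steps, in order, would be: (i) establish moment bounds for $\widetilde{X}_{k_N}(t_\ell^{(k_N)})$ uniformly in $N$ and $\ell$, using \eqref{eq:theorem_ad_strong_convergence} together with \eqref{eq:supEX_finite}, so that the random variable $\A_{k_N}$ and the counts $\eta_\ell$ are well-controlled; (ii) show via \eqref{eq:theorem_ad_convergence_in_Lq} that $\A_{k_N} \xrightarrow{L_q} ( \int_0^T \vert\sigma(t,X(t))\vert_{\infty,2}^2\,\mathrm{d}t )^{1/2}$, hence $N \cdot \A_{k_N}^{2q/(q+2)}$ is, up to the lower-order $k_N$ term, the total number of evaluations, and combined with \eqref{eq:inequality_totalNumber_nachOben}–\eqref{eq:inequality_totalNumber_nachUnten} and the second limit in \eqref{eq:limits_kN} this yields $c(\widehat{X}_{N,q}^{\mathrm{ad}})/(N) \to 1$ in an appropriate sense and $\log(c(\widehat{X}_{N,q}^{\mathrm{ad}})) \sim \log N$; (iii) on each subinterval $[t_\ell^{(k_N)},t_{\ell+1}^{(k_N)}]$ the scheme increments are Brownian motion rescaled by the frozen matrix $\sigma_{k_N}(t_\ell^{(k_N)},\widetilde{X}_{k_N}(t_\ell^{(k_N)}))$ on a uniform sub-grid of mesh $T/(k_N(\eta_\ell+1))$, and the conditional expectation of the $q$th power of the supremum over that subinterval of the distance between the scheme and its linear interpolant equals, asymptotically, $\vert\sigma_{k_N}(\cdots)\vert_{\infty,2}^q$ times the $q$th moment of a Brownian-bridge supremum on an interval of that length, which is $(T/(2 k_N(\eta_\ell+1)) \cdot \log(\eta_\ell+1))^{q/2}$ up to constants—this is where the factor $(T/2)^{1/2}$ and the logarithm enter; (iv) sum over $\ell$, insert the definition of $\eta_\ell$ from \eqref{eq:eta_l}, recognize that the optimal exponent $2q/(q+2)$ makes the per-interval contributions balance, pass to the limit using the $L_q$ convergence from step (ii) and a dominated-convergence argument, and match with $C_q^{\mathrm{ad}}$; (v) finally, compare $e_q(\widehat{X}_{N,q}^{\mathrm{ad}})$ rescaled by $(c(\widehat{X}_{N,q}^{\mathrm{ad}})/\log c(\widehat{X}_{N,q}^{\mathrm{ad}}))^{1/2}$ against the minimal error rescaled by the same, using $\widehat{X}_{N,q}^{\mathrm{ad}} \in \X_{\lceil c(\widehat{X}_{N,q}^{\mathrm{ad}})\rceil}^{\mathrm{ad}}$ and monotonicity of the minimal error in $N$, to obtain the ratio-to-one conclusion when $C_q^{\mathrm{ad}}>0$.

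I expect the main obstacle to be step (iii)–(iv): making the heuristic ``conditionally on the coarse skeleton, the refined scheme behaves like a scaled Brownian motion, and the supremum error is governed by the Brownian-bridge law of the iterated logarithm type asymptotics'' fully rigorous, uniformly over the random partition with random counts $\eta_\ell$ that themselves depend on the path. One has to be careful that the number of refined intervals $\eta_\ell+1$ tends to infinity on the event $\{\A_{k_N}>0\}$ in a way compatible with the $\log(\eta_\ell+1)$ factor aggregating to $\log N$, and that the frozen-coefficient approximation error (the difference between $\sigma_{k_N}$ evaluated at the coarse point versus the true local behavior of $X$) is negligible at the relevant order; here assumptions \hyperlink{ass:locL}{(locL)} and \hyperlink{ass:H}{(H)} together with the moment bounds of step (i) should suffice, but the bookkeeping is delicate. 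A secondary technical point is interchanging the limit with the expectation in the presence of the supremum norm, for which a uniform-integrability estimate derived from the $q$-th moment bounds and a slightly-higher-moment version of \eqref{eq:theorem_ad_strong_convergence} (available from the appendix) will be needed.
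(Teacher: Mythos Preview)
Your proposal has a genuine gap in the lower-bound argument, and a related misconception about which hypotheses are available.

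The theorem is stated under \emph{only} three assumptions: uniqueness of the solution with $C_q^{\mathrm{ad}}<\infty$, the strong-convergence bound \eqref{eq:theorem_ad_strong_convergence}, and the $L_q$ convergence \eqref{eq:theorem_ad_convergence_in_Lq}. You repeatedly invoke Assumptions \hyperlink{ass:locL}{(locL)} and \hyperlink{ass:H}{(H)}, the pointwise moment bound \eqref{eq:supEX_finite}, and ``localization near the initial value'' in the style of \cite{tmg2017}. None of these are available here, and the whole point of the theorem is that they are not needed. Your lower-bound route---couple $X$ locally to a Brownian motion and import the sharp constant from \cite{tmg2002,tmg2017}---therefore does not go through under the stated hypotheses.

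The paper's device, which you are missing, is to transfer the lower bound from $X$ to the auxiliary scheme $\widetilde{X}_{k_N}$. By the inverse triangle inequality and \eqref{eq:theorem_ad_strong_convergence},
\[
e_q(\widehat X_N)\ \ge\ \big\Vert\Vert \widetilde X_{k_N}-\widehat X_N\Vert_\infty\big\Vert_{L_q}-c\,k_N^{-1/2},
\]
and now the comparison process $\widetilde X_{k_N}$ has an \emph{exact} piecewise-linear-in-$W$ structure: on each coarse interval, $\widetilde X_{k_N}(t)-\E[\widetilde X_{k_N}(t)\mid D_N]$ is the frozen matrix $\sigma_{k_N}(t_\ell^{(k_N)},\widetilde X_{k_N}(t_\ell^{(k_N)}))$ times a Brownian bridge. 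A symmetrization argument (conditional law of $W$ equals that of $-W$ given the data) shows that the conditional expectation minimizes the $L_q$ supremum distance among all functions of $D_N$, and from there one lands on weighted Brownian-bridge maxima and the constant $C_q^{\mathrm{ad}}$ via Fatou and a subsequence argument. No regularity of $\mu,\sigma$ enters; everything is carried by \eqref{eq:theorem_ad_strong_convergence} and \eqref{eq:theorem_ad_convergence_in_Lq}. Your upper-bound sketch is closer in spirit, but two details are off: first, $c(\widehat X_{N,q}^{\mathrm{ad}})/N$ does \emph{not} tend to $1$; by \eqref{eq:inequality_totalNumber_nachOben}--\eqref{eq:inequality_totalNumber_nachUnten} and \eqref{eq:theorem_ad_convergence_in_Lq} it tends to $\big\Vert(\int_0^T\vert\sigma(t,X(t))\vert_{\infty,2}^2\,\mathrm{d}t)^{1/2}\big\Vert_{L_{2q/(q+2)}}^{2q/(q+2)}$, and this extra factor is exactly what combines with the $\A_{k_N}^{2/(q+2)}$ inside the norm to produce $C_q^{\mathrm{ad}}$. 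Second, for the supremum error you do not ``sum over $\ell$'' the per-interval contributions; the relevant quantity is a maximum of weighted Brownian-bridge suprema, handled through the functionals $\mathcal M_q$ and $\mathcal G_q$ and Lemma~2 of \cite{tmg2002}, not an additive decomposition.
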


\begin{proof}
This result is an immediate consequence of the Lemmas~\ref{lemma:liminf_ad} and \ref{lemma:limsup_ad} given in Section~\ref{sec:Proofs}.
\end{proof}

Next, we specify the asymptotics of the $N$th minimal errors in the classes of equidistant approximations as well as the asymptotics of the errors of the equidistant modified EM schemes. More precisely, we not only state the convergence rates but also give the asymptotic constants. Since both the convergence rates and the asymptotic constants match, we obtain strong asymptotic optimality of the equidistant modified EM schemes in the classes of equidistant approximations.

\begin{Theorem}
\label{th:theorem_eq}
Fix $q \in [1,\infty)$ such that the SDE \eqref{eq:SDE} has a unique solution $(X(t))_{t \in [0,T]}$ which satisfies $C_q^{\mathrm{eq}} < \infty$, and fix measurable functions $(\mu_N)_{N \in \N}$ and $(\sigma_N)_{N \in \N}$ as per Section~\ref{sec:Schemes}. Moreover, assume that there exists $C \in (0,\infty)$ such that for all $N \in \N$ it holds that
\begin{equation}
\label{eq:theorem_eq_strong_convergence}
\Big\Vert \big\Vert X - \widetilde{X}_N \big\Vert_\infty \Big\Vert_{L_q} \le C \cdot N^{-1/2},
\end{equation}
and assume that
\begin{equation}
\label{eq:theorem_eq_convergence_in_Lq}
\max_{\ell \in \{ 0, \ldots, N-1 \}} \big\vert \sigma_N\big(t_\ell^{(N)}, \widetilde{X}_N(t_\ell^{(N)}) \big) \big\vert_{\infty,2} \quad \xrightarrow[N \to \infty]{L_q} \quad \sup_{t \in [0,T]} \big\vert \sigma\big( t, X(t) \big) \big\vert_{\infty,2}. 
\end{equation}
Then it holds that
\begin{equation}
\label{eq:theorem_minErrors_eq}
\lim_{N \to \infty} \big( N/\log(N) \big)^{1/2} \cdot \inf\Big\{ e_q\big( \widehat{X} \big) \; \Big| \; \widehat{X} \in \mathbb{X}_N^{\mathrm{eq}} \Big\} = C_q^{\mathrm{eq}}
\end{equation}
and
\begin{equation}
\label{eq:theorem_tamedSchemes_eq}
\lim_{N \to \infty} \big( N/\log(N) \big)^{1/2} \cdot e_q\big( \widehat{X}_N^{\mathrm{eq}} \big) = C_q^{\mathrm{eq}}.
\end{equation}
In the case $C_q^{\mathrm{eq}} > 0$, we conclude that the approximations $(\widehat{X}_N^{\mathrm{eq}})_{N \in \N}$ are strongly asymptotically optimal in the classes $(\X_N^{\mathrm{eq}})_{N \in \N}$.
\end{Theorem}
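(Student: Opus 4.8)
The plan is to establish \eqref{eq:theorem_minErrors_eq} and \eqref{eq:theorem_tamedSchemes_eq} separately, as the matching limits then immediately yield strong asymptotic optimality (dividing \eqref{eq:theorem_tamedSchemes_eq} by \eqref{eq:theorem_minErrors_eq} gives the ratio $1$, using $C_q^{\mathrm{eq}} > 0$). Mirroring the structure announced for Theorem~\ref{th:theorem_ad}, I would split the work into a lower bound lemma (the $\liminf$ of $(N/\log N)^{1/2} \cdot \inf\{e_q(\widehat{X}) \mid \widehat{X} \in \X_N^{\mathrm{eq}}\}$ is $\ge C_q^{\mathrm{eq}}$) and an upper bound lemma (the $\limsup$ of $(N/\log N)^{1/2} \cdot e_q(\widehat{X}_N^{\mathrm{eq}})$ is $\le C_q^{\mathrm{eq}}$); since every $\widehat{X}_N^{\mathrm{eq}} \in \X_N^{\mathrm{eq}}$, the two bounds squeeze both limits to $C_q^{\mathrm{eq}}$.

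\medskip

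For the \textbf{upper bound} on $e_q(\widehat{X}_N^{\mathrm{eq}})$, I would first use the triangle inequality in $L_q(\Omega; \mathcal{C}([0,T];\R^d))$ to split $\|X - \widehat{X}_N^{\mathrm{eq}}\|_\infty$ into $\|X - \widetilde{X}_N\|_\infty$ plus $\|\widetilde{X}_N - \widehat{X}_N^{\mathrm{eq}}\|_\infty$. The first term is $O(N^{-1/2})$ by hypothesis \eqref{eq:theorem_eq_strong_convergence}, hence negligible against the target rate $(N/\log N)^{-1/2}$. The second term is the interpolation error: on each subinterval $[t_\ell^{(N)}, t_{\ell+1}^{(N)}]$, $\widehat{X}_N^{\mathrm{eq}}$ is the chord of $\widetilde{X}_N$, so $\widetilde{X}_N(t) - \widehat{X}_N^{\mathrm{eq}}(t)$ equals, up to the drift part which contributes only $O(N^{-1})$, the quantity $\sigma_N(t_\ell^{(N)}, \widetilde{X}_N(t_\ell^{(N)})) \cdot (W(t) - W(t_\ell^{(N)}) - \tfrac{t - t_\ell^{(N)}}{t_{\ell+1}^{(N)} - t_\ell^{(N)}}(W(t_{\ell+1}^{(N)}) - W(t_\ell^{(N)})))$, i.e. a Brownian bridge scaled by the frozen diffusion matrix. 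Conditioning on $\mathcal{F}(t_\ell^{(N)})$ the bridges on the $N$ cells are independent and the dominant contribution to the supremum error is $\max_{\ell} |\sigma_N(t_\ell^{(N)}, \widetilde{X}_N(t_\ell^{(N)}))|_{\infty,2} \cdot \sup_{s \in [0,T/N]} |B(s)|$-type terms, where $B$ is a standard Brownian bridge; the classical extreme-value asymptotics for the maximum of $N$ i.i.d. scaled bridge suprema on intervals of length $T/N$ gives the factor $(T \log N / (2N))^{1/2}$, and the $L_q$-convergence hypothesis \eqref{eq:theorem_eq_convergence_in_Lq} lets one replace the random coefficient $\max_\ell |\sigma_N(\cdots)|_{\infty,2}$ by $\sup_{t} |\sigma(t,X(t))|_{\infty,2}$ in the limit, producing exactly $C_q^{\mathrm{eq}} = (T/2)^{1/2} \||\!\sup_t |\sigma(t,X(t))|_{\infty,2}|\!\|_{L_q}$. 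This is essentially the argument of Müller-Gronbach~\cite{tmg2002} for the Lipschitz case, and I would cite it for the bridge extreme-value computations while supplying the approximation steps that handle the coefficient modification.

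\medskip

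For the \textbf{lower bound} on the $N$th minimal error, the key observation is that by the representation \eqref{eq:condExp} every $\widehat{X} \in \X_N^{\mathrm{eq}}$ is a measurable function of $(\xi, W(T/N), \ldots, W(T))$, so $e_q(\widehat{X})^q \ge \E[\sup_t |X(t) - \E[X(t) \mid \mathcal{G}_N]|^q]$ where $\mathcal{G}_N := \sigma(\xi, W(T/N), \ldots, W(T))$ — the conditional expectation is the $L_2$-optimal and (after a standard Jensen/conditional-variance argument) essentially the pathwise-supremum-optimal predictor, with a correction one controls via the local smoothness. Then I would localize: conditioning on $\mathcal{G}_N$, on each cell the solution $X$ minus its conditional mean behaves, to leading order, like a Brownian bridge scaled by $\sigma(t_\ell^{(N)}, X(t_\ell^{(N)}))$ — this is where Assumptions \hyperlink{ass:locL}{(locL)}, \hyperlink{ass:H}{(H)}, and moment bounds \eqref{eq:supEX_finite} enter, to show the It\^o increments are well-approximated by frozen-coefficient Gaussian increments and the error is lower-order. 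The same extreme-value-of-$N$-bridges asymptotics then yields the lower constant $C_q^{\mathrm{eq}}$, matching the upper bound. I would again lean on Müller-Gronbach~\cite{tmg2002} for the conditional-expectation lower bound machinery and the bridge estimates.

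\medskip

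The \textbf{main obstacle} is that Müller-Gronbach's original arguments are built on globally Lipschitz, linearly growing coefficients, which guarantee uniform moment bounds and uniform strong rates automatically; here these must be replaced by the abstract hypotheses \eqref{eq:theorem_eq_strong_convergence}–\eqref{eq:theorem_eq_convergence_in_Lq} together with the Appendix moment and convergence results for the continuous-time tamed scheme. The delicate point is the \emph{uniform integrability} needed to pass from convergence in probability of the (random, $N$-dependent) scaled bridge-supremum maxima to convergence of their $q$th moments: one must dominate $\big((N/\log N)^{1/2} \|\widetilde{X}_N - \widehat{X}_N^{\mathrm{eq}}\|_\infty\big)^q$ uniformly in $N$ by an integrable random variable, which requires careful Gaussian tail estimates for the maximum over $N$ cells combined with the polynomial-growth bound \hyperlink{ass:pG}{(pG$_\bullet^\sigma$)} on $\sigma_N$ and the moment bounds on $\widetilde{X}_N$ — the $\log N$ normalization makes this a genuinely quantitative concentration argument rather than a soft one. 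A secondary technical nuisance is carefully tracking that the drift contributions and the discrepancy between $\sigma_N(t_\ell, \widetilde{X}_N(t_\ell))$ and $\sigma(t_\ell, X(t_\ell))$ are both $o((N/\log N)^{-1/2})$ in the appropriate $L_q$-sense, which is exactly what hypothesis \eqref{eq:theorem_eq_convergence_in_Lq} is designed to deliver.
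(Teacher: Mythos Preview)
Your upper-bound argument matches the paper's Lemma~\ref{lemma:limsup_eq} closely; the uniform-integrability concern you raise is in fact bypassed there because, after conditioning on the data $(\xi,W(t_1^{(N)}),\dots,W(t_N^{(N)}))$, the bridge-maximum bound factors as $(\max_\ell|\sigma_N(\cdots)|_{\infty,2})^q$ times a \emph{deterministic} quantity $I_N\to1$ (Lemma~2 in \cite{tmg2002}), so hypothesis~\eqref{eq:theorem_eq_convergence_in_Lq} applies directly with no further domination needed.

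The lower bound, however, has a real gap. You propose to analyze $X-\E[X\mid\mathcal G_N]$ and show it is, to leading order, a scaled Brownian bridge on each cell, explicitly invoking Assumptions~(locL), (H) and the moment bound~\eqref{eq:supEX_finite}. But none of these are hypotheses of Theorem~\ref{th:theorem_eq}; the theorem is stated abstractly with only $C_q^{\mathrm{eq}}<\infty$ and~\eqref{eq:theorem_eq_strong_convergence}--\eqref{eq:theorem_eq_convergence_in_Lq}. Your route would therefore not prove the theorem as stated (it would at best prove a weaker version under extra regularity). The paper's device in Lemma~\ref{lemma:liminf_eq} is the missing idea: apply the \emph{inverse} triangle inequality to replace $X$ by $\widetilde X_N$ at cost $O(N^{-1/2})$ via~\eqref{eq:theorem_eq_strong_convergence}, and then exploit that $\widetilde X_N$ is piecewise affine in $W$ with $D_N$-measurable coefficients, so that $\widetilde X_N-\E[\widetilde X_N\mid D_N]$ equals \emph{exactly} $\sigma_N(t_\ell^{(N)},\widetilde X_N(t_\ell^{(N)}))\cdot\bigl(W-\E[W\mid D_N]\bigr)$ on each cell --- no approximation, no coefficient regularity required. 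A reflection argument (conditionally on $D_N$, $W$ and $-W$ have the same law) then shows the conditional expectation is within a factor $1$ of optimal among $D_N$-measurable predictors for \emph{every} $q$, not just $q=2$, after which the bridge asymptotics together with a subsequence-plus-Fatou step finish the lower bound using only~\eqref{eq:theorem_eq_convergence_in_Lq}.
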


\begin{proof}
This result is an immediate consequence of the Lemmas \ref{lemma:liminf_eq} and \ref{lemma:limsup_eq} given in Section~\ref{sec:Proofs}.
\end{proof}

\begin{Remark}
We briefly comment on the assumptions of the theorems above. First, we consider the equidistant case. Clearly, existence of a unique solution and finiteness of the asymptotic constant~$C_q^{\mathrm{eq}}$ ensure well-posedness of the considered problem. A crucial step in our analysis of $e_q(\widehat{X}_{N}^{\mathrm{eq}})$, $N \in \N$, will be to split this error into two parts: the distance between $X$ and $\widetilde{X}_N$ and the distance between $\widetilde{X}_N$ and $\widehat{X}_{N}^{\mathrm{eq}}$, see \eqref{eq:test1}. The strong convergence order $1/2$ of the continuous-time EM schemes as given by~\eqref{eq:theorem_eq_strong_convergence} will imply that the former distances become asymptotically negligible in comparison to the latter ones. And exactly these latter distances will turn out to be determined by specific weighted Brownian bridges; the $L_q$ convergence~\eqref{eq:theorem_eq_convergence_in_Lq} will then, in particular, establish a central asymptotic relation between these weights and the asymptotic constant. Similar arguments also apply for the adaptive case.
\end{Remark}

\section{Applications}
\label{sec:Applications}

We now present two exemplary applications of our main theorems. First, we show strong asymptotic optimality of the classical Euler--Maruyama schemes relating to specific adaptive and equidistant discretizations in the setting of SDEs with globally Lipschitz continuous coefficients. Afterwards, we derive that specific adaptive and equidistant tamed Euler schemes are strongly asymptotically optimal for certain SDEs whose coefficients may grow polynomially.

\subsection{Euler--Maruyama Schemes}

Let the functions $(\mu_N)_{N \in \N}$ and $(\sigma_N)_{N \in \N}$ be given by
\begin{equation}
\label{eq:muN_sigmaN_Euler}
\mu_N = \mu, \quad \sigma_N = \sigma, \quad N \in \N.
\end{equation}
Then the modified EM schemes determined by these functions coincide with the corresponding classical Euler--Maruyama schemes. In the subsequent corollary, we prove strong asymptotic optimality of their adaptive and equidistant variants in the classes of adaptive and of equidistant approximations, respectively.

\begin{Corollary}
\label{cor:Euler}
Fix $q \in [1,\infty)$ and let the Assumptions~\hyperlink{ass:I}{\normalfont (I$_{\max\{q,2\}}$)}, \hyperlink{ass:H}{\normalfont (H)}, \hyperlink{ass:pL}{\normalfont (pL$_{0}^{\mu}$)}, and \hyperlink{ass:pL}{\normalfont (pL$_{0}^{\sigma}$)} be satisfied. Moreover, let the functions $(\mu_N)_{N \in \N}$ and $(\sigma_N)_{N \in \N}$ be given by \eqref{eq:muN_sigmaN_Euler}. Then the asymptotics~\eqref{eq:theorem_minErrors_ad}, \eqref{eq:theorem_tamedSchemes_ad}, \eqref{eq:theorem_minErrors_eq}, and \eqref{eq:theorem_tamedSchemes_eq} hold true. If we additionally have $C_q^{\mathrm{ad}} > 0$, then the Euler--Maruyama schemes $(\widehat{X}_{N,q}^{\mathrm{ad}})_{N \in \N}$ and $(\widehat{X}_{N}^{\mathrm{eq}})_{N \in \N}$ are strongly asymptotically optimal in the classes $(\X_{\lceil c(\widehat{X}_{N,q}^{\mathrm{ad}}) \rceil}^{\mathrm{ad}})_{N \in \N}$ and $(\X_N^{\mathrm{eq}})_{N \in \N}$, respectively.
\end{Corollary}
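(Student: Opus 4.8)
The plan is to verify that, under the hypotheses of Corollary~\ref{cor:Euler}, the two abstract conditions of Theorems~\ref{th:theorem_ad} and~\ref{th:theorem_eq}---namely the strong convergence order $1/2$ of the continuous-time schemes $\widetilde{X}_N$ and the $L_q$ convergence of the discretized diffusion norms---are both satisfied, so that the conclusions transfer verbatim. First I would observe that since $\mu_N = \mu$ and $\sigma_N = \sigma$, the continuous-time modified EM scheme $\widetilde{X}_N$ is simply the classical continuous-time Euler--Maruyama scheme, and the adaptive/equidistant modified EM schemes are the classical adaptive/equidistant Euler--Maruyama schemes. I would also note that the assumptions (pL$_0^\mu$) and (pL$_0^\sigma$) say precisely that $\mu$ and $\sigma$ are globally Lipschitz in the state variable, and together with (H) they yield at most linear growth in the state variable uniformly in time; combined with (I$_{\max\{q,2\}}$) these are exactly the classical hypotheses under which existence and uniqueness of a solution $(X(t))_{t\in[0,T]}$ with finite $\max\{q,2\}$th moments holds, and under which $C_q^{\mathrm{eq}}<\infty$ (hence also $C_q^{\mathrm{ad}}<\infty$, using $C_q^{\mathrm{ad}}\le C_q^{\mathrm{eq}}$).

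Next I would establish the strong convergence estimate \eqref{eq:theorem_eq_strong_convergence} (which is identical to \eqref{eq:theorem_ad_strong_convergence}): there exists $C\in(0,\infty)$ with $\Vert\,\Vert X-\widetilde{X}_N\Vert_\infty\Vert_{L_q}\le C\cdot N^{-1/2}$ for all $N\in\N$. This is a standard result for the piecewise-linearly/continuous-time Euler--Maruyama scheme under global Lipschitz continuity of the coefficients in space and Hölder-$1/2$ continuity in time, with $q$th moments controlled by (I$_{\max\{q,2\}}$); I would cite it from Faure~\cite{faure1992} or Kloeden and Platen~\cite{kloeden1992} (or derive it quickly via the Burkholder--Davis--Gundy inequality, Gronwall's lemma, and the moment bound \eqref{eq:supEX_finite}), being careful that it is the \emph{supremum-over-$[0,T]$} error in $L_q$ that is needed, not just the error at grid points---but this is classical. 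Then I would verify the two $L_q$-convergence conditions \eqref{eq:theorem_ad_convergence_in_Lq} and \eqref{eq:theorem_eq_convergence_in_Lq}. For \eqref{eq:theorem_ad_convergence_in_Lq} one writes the left-hand side as $(\int_0^T |\sigma(\lfloor sN/T\rfloor T/N,\widetilde{X}_N(\lfloor sN/T\rfloor T/N))|_{\infty,2}^2\,\mathrm{d}s)^{1/2}$ and compares it to $(\int_0^T |\sigma(s,X(s))|_{\infty,2}^2\,\mathrm{d}s)^{1/2}$; using the Lipschitz property of $|\cdot|_{\infty,2}$ on matrices, assumptions (H), (pL$_0^\sigma$), the strong convergence just established (which in particular gives $\sup_\ell |\widetilde{X}_N(t_\ell^{(N)})-X(t_\ell^{(N)})|\to 0$ in $L_q$), path-continuity of $s\mapsto X(s)$, and uniform $L_{2q}$ (or higher) moment bounds on $X$ and $\widetilde{X}_N$ to justify passing the limit under the integral and the $L_q$ expectation (dominated convergence / uniform integrability), one obtains the claimed $L_q$ convergence. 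The argument for \eqref{eq:theorem_eq_convergence_in_Lq} is analogous but with the maximum over grid points converging to the supremum over $[0,T]$: here one additionally uses uniform continuity of $s\mapsto\sigma(s,X(s))$ on the compact $[0,T]$ (a.s.) to control $\sup_{t}|\sigma(t,X(t))|_{\infty,2} - \max_\ell |\sigma(t_\ell^{(N)},X(t_\ell^{(N)}))|_{\infty,2}\to 0$ a.s., and again a moment/domination argument to upgrade to $L_q$.

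Having checked all hypotheses, I would invoke Theorem~\ref{th:theorem_ad} to obtain \eqref{eq:theorem_minErrors_ad} and \eqref{eq:theorem_tamedSchemes_ad}, and Theorem~\ref{th:theorem_eq} to obtain \eqref{eq:theorem_minErrors_eq} and \eqref{eq:theorem_tamedSchemes_eq}; the strong asymptotic optimality statements in the case $C_q^{\mathrm{ad}}>0$ then follow directly from those theorems (noting that $C_q^{\mathrm{ad}}>0$ implies $C_q^{\mathrm{eq}}>0$ as well). The main obstacle I anticipate is the verification of the two $L_q$-convergence conditions---specifically, producing a clean domination to move from almost-sure convergence of the (random) Riemann-type sums and maxima to $L_q$ convergence; this requires moment bounds on $\widetilde{X}_N$ that are uniform in $N$ and of sufficiently high order relative to the polynomial growth of $\sigma$ (here linear growth, so order $2q$ on the state suffices), which in the globally Lipschitz setting are standard but must be stated carefully, presumably drawing on Proposition~\ref{prop:EsupX} or the analogous moment bounds for the continuous-time Euler schemes collected in Appendix~\ref{sec:Appendix}. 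Everything else is bookkeeping.
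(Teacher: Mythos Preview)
Your proposal is correct and follows essentially the same route as the paper: deduce linear growth from (H), (pL$_0^\mu$), (pL$_0^\sigma$), obtain the moment bound $\E[\sup_t|X(t)|^{\max\{q,2\}}]<\infty$ (hence $C_q^{\mathrm{eq}}<\infty$), cite the classical order-$1/2$ supremum-error result for continuous-time Euler--Maruyama (the paper points to Proposition~14 in Faure~\cite{faure1992}), verify \eqref{eq:theorem_ad_convergence_in_Lq} and \eqref{eq:theorem_eq_convergence_in_Lq} via (H), (pL$_0^\sigma$), linear growth, and the moment bound, and invoke Theorems~\ref{th:theorem_ad}--\ref{th:theorem_eq}. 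One small simplification: you will not actually need $2q$ moments or a uniform-integrability argument---because $\sigma$ is globally Lipschitz in $x$, the differences $\max_\ell|\sigma(t_\ell^{(N)},\widetilde X_N(t_\ell^{(N)}))|_{\infty,2}-\max_\ell|\sigma(t_\ell^{(N)},X(t_\ell^{(N)}))|_{\infty,2}$ are controlled directly by $\sup_t|\widetilde X_N(t)-X(t)|$ in $L_q$, and the remaining step $\max_\ell|\sigma(t_\ell^{(N)},X(t_\ell^{(N)}))|_{\infty,2}\to\sup_t|\sigma(t,X(t))|_{\infty,2}$ goes through by dominated convergence with the fixed $L_q$ majorant $\sup_t|\sigma(t,X(t))|_{\infty,2}$, so $\max\{q,2\}$ moments suffice.
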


\begin{proof}
First of all, the Assumptions~\hyperlink{ass:H}{\normalfont (H)}, \hyperlink{ass:pL}{\normalfont (pL$_{0}^{\mu}$)}, and \hyperlink{ass:pL}{\normalfont (pL$_{0}^{\sigma}$)} imply the linear growth conditions \hyperlink{ass:pG}{\normalfont (pG$_{1}^{\mu}$)} and \hyperlink{ass:pG}{\normalfont (pG$_{1}^{\sigma}$)}. Hence, by using Proposition~\ref{prop:EsupX} from Appendix~\ref{sec:Appendix}, we conclude
\begin{equation}
\label{eq:appl_help1}
\E\bigg[ \sup_{t \in [0,T]} \big| X(t) \big|^{\max\{q,2\}} \bigg] < \infty
\end{equation}
and thereby prove $C_q^{\mathrm{eq}} < \infty$. 

Furthermore, it is well-known that in the considered setting the continuous-time Euler--Maruyama schemes convergence strongly with order $1/2$, see, for instance, Proposition~14 in Faure~\cite{faure1992}; we consequently have \eqref{eq:theorem_ad_strong_convergence} and \eqref{eq:theorem_eq_strong_convergence}. 

Finally, the $L_q$ convergences \eqref{eq:theorem_ad_convergence_in_Lq} and \eqref{eq:theorem_eq_convergence_in_Lq} essentially follow from \hyperlink{ass:H}{\normalfont (H)}, \hyperlink{ass:pL}{\normalfont (pL$_{0}^{\sigma}$)}, \hyperlink{ass:pL}{\normalfont (pG$_{1}^{\sigma}$)}, and \eqref{eq:appl_help1}.

Applying Theorems~\ref{th:theorem_ad} and \ref{th:theorem_eq} finishes the proof of this corollary.
\end{proof}

\begin{Remark}
The results on the Euler--Maruyama schemes presented in Corollary~\ref{cor:Euler} are well-known and were first (directly) proved by Müller-Gronbach~\cite{tmg2002} for the particular case $T=1$.
\end{Remark}

\subsection{Tamed Euler Schemes}

Let $r \in [0,\infty)$ and let the functions $(\mu_N)_{N \in \N}$ and $(\sigma_N)_{N \in \N}$ be given by
\begin{equation}
\label{eq:muN_sigmaN_tamedEuler}
\begin{split}
\mu_N = \mu_N^{(r)} : \quad [0,T] \times \R^d \to \R^d, \quad &(t,x) \mapsto \frac{\mu(t,x)}{1+(T/N)^{1/2} \cdot |x|^r}, \\ 
\sigma_N = \sigma_N^{(r)} : \quad [0,T] \times \R^d \to \R^{d \times m}, \quad &(t,x) \mapsto \frac{\sigma(t,x)}{1+(T/N)^{1/2} \cdot |x|^r}, \quad N \in \N.
\end{split}
\end{equation}
Then the modified EM schemes determined by these functions constitute so-called tamed Euler schemes.
In the subsequent corollary, we prove strong asymptotic optimality of their adaptive and equidistant variants in the classes of adaptive and of equidistant approximations, respectively.

We stress that the type of continuous-time tamed Euler schemes considered here is heavily inspired by the one introduced in Sabanis~\cite{sabanis2016}. The reason we do not use the latter is that our approach is more convenient for our analysis; in particular, our schemes satisfy the desired recursion~\eqref{eq:recursiveStructure}. Nevertheless, observe that both types of tamed Euler schemes coincide in the case that the SDE~\eqref{eq:SDE} is autonomous and $T=1$.

\begin{Corollary}
\label{cor:tamedEuler}
Fix $q \in [1,\infty)$ and let the Assumptions~\hyperlink{ass:I}{\normalfont (I$_p$)}, \hyperlink{ass:H}{\normalfont (H)}, \hyperlink{ass:K}{\normalfont (K$_p$)}, \hyperlink{ass:M}{\normalfont (M$_a$)}, and \hyperlink{ass:pL}{\normalfont (pL$_r^\mu$)} be satisfied for some $p,a \in [2,\infty)$ and $r \in [0,\infty)$ such that $p \ge 4r+2$ and $q < \min\{ a, p/(2r+1) \}$. Moreover, let the functions $(\mu_N)_{N \in \N}$ and $(\sigma_N)_{N \in \N}$ be given by \eqref{eq:muN_sigmaN_tamedEuler}. Then the asymptotics~\eqref{eq:theorem_minErrors_ad}, \eqref{eq:theorem_tamedSchemes_ad}, \eqref{eq:theorem_minErrors_eq}, and \eqref{eq:theorem_tamedSchemes_eq} hold true. If we additionally have $C_q^{\mathrm{ad}} > 0$, then the tamed Euler schemes $(\widehat{X}_{N,q}^{\mathrm{ad}})_{N \in \N}$ and $(\widehat{X}_{N}^{\mathrm{eq}})_{N \in \N}$ are strongly asymptotically optimal in the classes $(\X_{\lceil c(\widehat{X}_{N,q}^{\mathrm{ad}}) \rceil}^{\mathrm{ad}})_{N \in \N}$ and $(\X_N^{\mathrm{eq}})_{N \in \N}$, respectively.
\end{Corollary}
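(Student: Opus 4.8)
The plan is to verify the hypotheses of Theorems~\ref{th:theorem_ad} and~\ref{th:theorem_eq} for the continuous-time tamed Euler schemes $\widetilde{X}_N$ determined by~\eqref{eq:muN_sigmaN_tamedEuler} and then to invoke those theorems, which at once yield~\eqref{eq:theorem_minErrors_ad}--\eqref{eq:theorem_tamedSchemes_eq} and, when $C_q^{\mathrm{ad}}>0$, the asserted strong asymptotic optimality. Three items must be checked: (a) the SDE~\eqref{eq:SDE} has a unique solution with $C_q^{\mathrm{eq}}<\infty$ (which, since $C_q^{\mathrm{ad}}\le C_q^{\mathrm{eq}}$, also handles the adaptive case); (b) the strong convergence bound required in both theorems, i.e.~\eqref{eq:theorem_ad_strong_convergence} and~\eqref{eq:theorem_eq_strong_convergence}; and (c) the $L_q$ convergences~\eqref{eq:theorem_ad_convergence_in_Lq} and~\eqref{eq:theorem_eq_convergence_in_Lq}. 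First I would record what the hypotheses entail for $\sigma$, since regularity is assumed explicitly only for $\mu$: as $a\ge 2$, Assumption~\hyperlink{ass:M}{\normalfont (M$_a$)} implies its $a=2$ version, and combined with~\hyperlink{ass:pL}{\normalfont (pL$_r^{\mu}$)} it gives $|\sigma(t,x)-\sigma(t,y)|^2\le C|x-y|^2+2|x-y|\,|\mu(t,x)-\mu(t,y)|\le C|x-y|^2(1+|x|^r+|y|^r)$, hence both~\hyperlink{ass:locL}{\normalfont (locL)} and a polynomial-Lipschitz bound for $\sigma$ of order $r/2$; evaluating~\hyperlink{ass:H}{\normalfont (H)} at $x=0$ bounds $\mu(\cdot,0)$ and $\sigma(\cdot,0)$ on $[0,T]$, so these upgrade to~\hyperlink{ass:pG}{\normalfont (pG$_{r+1}^{\mu}$)} and~\hyperlink{ass:pG}{\normalfont (pG$_{(r+2)/2}^{\sigma}$)}.

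For (a), Assumptions~\hyperlink{ass:I}{\normalfont (I$_p$)}, \hyperlink{ass:locL}{\normalfont (locL)}, and~\hyperlink{ass:K}{\normalfont (K$_p$)} give the unique solution $(X(t))_{t\in[0,T]}$ with $\sup_{t\in[0,T]}\E[|X(t)|^p]<\infty$; feeding the growth bound~\hyperlink{ass:pG}{\normalfont (pG$_{(r+2)/2}^{\sigma}$)} into Proposition~\ref{prop:EsupX}, which applies and returns a moment order at least $q(r+2)/2$ thanks to $p\ge 4r+2$ and $q<p/(2r+1)$ (a short computation gives $q(r+2)/2<p-r$), yields $\E[\sup_{t\in[0,T]}|X(t)|^{q(r+2)/2}]<\infty$; since $|\sigma(t,X(t))|_{\infty,2}\le C(1+|X(t)|^{(r+2)/2})$, this gives $C_q^{\mathrm{eq}}<\infty$. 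For (b), the schemes~\eqref{eq:muN_sigmaN_tamedEuler} are of the Sabanis type of~\cite{sabanis2016}, arranged so that the recursion~\eqref{eq:recursiveStructure} holds, and are therefore covered by the results on continuous-time tamed Euler schemes collected in Appendix~\ref{sec:Appendix}; under the present hypotheses these provide uniform-in-$N$ moment bounds for $\widetilde{X}_N$ (at exponents high enough for the estimates below, again secured by $p\ge 4r+2$) together with the strong convergence $\Big\Vert \big\Vert X-\widetilde{X}_N \big\Vert_\infty \Big\Vert_{L_q}\le CN^{-1/2}$, that is~\eqref{eq:theorem_ad_strong_convergence} and~\eqref{eq:theorem_eq_strong_convergence}; here I would simply cite that proposition.

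For (c), write $A_N$, $B_N$ for the left-hand sides of~\eqref{eq:theorem_ad_convergence_in_Lq} and~\eqref{eq:theorem_eq_convergence_in_Lq} and $A$, $B$ for the right-hand sides. I would decompose $A_N-A$ and $B_N-B$ by the triangle inequality into three contributions. The \emph{taming error} (replacing $\sigma_N$ by $\sigma$ at the grid points) is at most $CN^{-1/2}(1+\Vert\widetilde{X}_N\Vert_\infty^{(3r+2)/2})$, since $|\sigma_N^{(r)}(t,x)-\sigma(t,x)|\le|\sigma(t,x)|\,(T/N)^{1/2}|x|^r$. The \emph{state error} (replacing $\widetilde{X}_N(t_\ell^{(N)})$ by $X(t_\ell^{(N)})$) is controlled through the polynomial-Lipschitz bound for $\sigma$, the strong convergence from (b), and the uniform moment bounds, split via Hölder's inequality. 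The \emph{mesh error}---namely that $\frac{T}{N}\sum_{\ell=0}^{N-1}|\sigma(t_\ell^{(N)},X(t_\ell^{(N)}))|_{\infty,2}^2\to\int_0^T|\sigma(t,X(t))|_{\infty,2}^2\,\mathrm{d}t$ and $\max_{\ell\in\{0,\ldots,N-1\}}|\sigma(t_\ell^{(N)},X(t_\ell^{(N)}))|_{\infty,2}\to\sup_{t\in[0,T]}|\sigma(t,X(t))|_{\infty,2}$ almost surely---follows from the almost sure uniform continuity on $[0,T]$ of $t\mapsto\sigma(t,X(t))$, which holds because $X$ has continuous paths and $\sigma$ is continuous in $x$ (by~\hyperlink{ass:locL}{\normalfont (locL)}) and Hölder-$1/2$ in $t$ (by~\hyperlink{ass:H}{\normalfont (H)}). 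These three contributions tend to $0$ in probability; combined with the $L_q$-uniform integrability of $A_N^q$ and $B_N^q$---which follows from $A_N,B_N\le C(1+\Vert\widetilde{X}_N\Vert_\infty^{(r+2)/2})$ and the uniform moment bounds on $\widetilde{X}_N$ at an exponent strictly above $q(r+2)/2$---this gives the claimed $L_q$ convergences. Theorems~\ref{th:theorem_ad} and~\ref{th:theorem_eq} then finish the proof.

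The part I expect to require the most care is the state-error estimate in step (c): it is a product of a factor that is $O(N^{-1/2})$ only in $L_q$ with a polynomially growing weight, so bounding it in $L_q$ forces a careful splitting of the two exponents, and it is precisely there---together with the uniform integrability and the moment bounds---that the quantitative hypotheses $q<\min\{a,p/(2r+1)\}$ and $p\ge 4r+2$ are consumed beyond what mere well-posedness requires: $q<a$ governs the strong $L_q$-convergence of the Sabanis-type scheme, while $q<p/(2r+1)$ and $p\ge 4r+2$ secure moment bounds of $X$ and $\widetilde{X}_N$ at the exponents dictated by the polynomial growth~\hyperlink{ass:pG}{\normalfont (pG$_{(r+2)/2}^{\sigma}$)} of $\sigma$.
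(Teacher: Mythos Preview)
Your proposal is correct and follows essentially the same route as the paper: derive \hyperlink{ass:locL}{\normalfont (locL)}, \hyperlink{ass:pL}{\normalfont (pL$_{r/2}^{\sigma}$)}, \hyperlink{ass:pG}{\normalfont (pG$_{r+1}^{\mu}$)}, \hyperlink{ass:pG}{\normalfont (pG$_{(r+2)/2}^{\sigma}$)} from the stated hypotheses, invoke Proposition~\ref{prop:EsupX} for $C_q^{\mathrm{eq}}<\infty$, invoke Proposition~\ref{prop:dist_X_tildeX} for the strong convergence, and for the $L_q$ convergences combine convergence in probability with uniform integrability via Proposition~\ref{prop:EsupTildeX}. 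The only cosmetic difference is that the paper first establishes $\sup_{t\in[0,T]}|\sigma(t,X(t))-\sigma_N(t,\widetilde{X}_N(t))|\to 0$ in probability (splitting into a state part and a taming part, each bounded in a suitable $L_\theta$) and then deduces~\eqref{eq:theorem_ad_convergence_in_Lq} and~\eqref{eq:theorem_eq_convergence_in_Lq} from that single uniform statement, whereas you decompose $A_N-A$ and $B_N-B$ directly into taming, state, and mesh contributions; the arguments and the exponents used coincide.
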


\begin{proof}
Observe first that in this setting the conditions~\hyperlink{ass:locL}{\normalfont (locL)}, \hyperlink{ass:pG}{\normalfont (pG$_{r+1}^{\mu}$)}, \hyperlink{ass:pL}{\normalfont (pL$_{r/2}^{\sigma}$)}, and \hyperlink{ass:pG}{\normalfont (pG$_{(r+2)/2}^{\sigma}$)} are also satisfied. Hence, by using Proposition~\ref{prop:EsupX} from Appendix~\ref{sec:Appendix}, we conclude
\begin{equation}
\label{eq:appl_help2}
\E\bigg[ \sup_{t \in [0,T]} \big| X(t) \big|^{p-r} \bigg] < \infty
\end{equation}
and thereby prove $C_q^{\mathrm{eq}} < \infty$.

The strong convergence properties \eqref{eq:theorem_ad_strong_convergence} and \eqref{eq:theorem_eq_strong_convergence} immediately follow from Proposition~\ref{prop:dist_X_tildeX} given in Appendix~\ref{sec:Appendix}.

Our proof for the $L_q$ convergences \eqref{eq:theorem_ad_convergence_in_Lq} and \eqref{eq:theorem_eq_convergence_in_Lq} is slightly more demanding and will be carried out in several steps.
As a first step, we show
\begin{equation}
\label{eq:ch3_inProbability}
\sup_{t \in [0,T]} \Big| \sigma\big( t, X(t) \big) - \sigma_N\big( t, \widetilde{X}_N(t) \big) \Big| \quad \xrightarrow[N \to \infty]{\Prob} \quad 0.
\end{equation}
Due to the triangle inequality, it suffices to prove
\begin{equation}
\label{eq:ch3_inProb1}
\sup_{t \in [0,T]} \Big| \sigma\big( t, X(t) \big) - \sigma\big( t, \widetilde{X}_{N}(t) \big) \Big| \quad \xrightarrow[N \to \infty]{\Prob} \quad 0
\end{equation}
and
\begin{equation}
\label{eq:ch3_inProb2}
\sup_{t \in [0,T]} \Big| \sigma\big( t, \widetilde{X}_{N}(t) \big) - \sigma_N\big( t, \widetilde{X}_{N}(t) \big) \Big| \quad \xrightarrow[N \to \infty]{\Prob} \quad 0.
\end{equation}
To this end, we show $L_\theta$ convergence of the respective random variables to zero for appropriate values of $\theta \in (0,\infty)$. First, combining the condition~\hyperlink{ass:pL}{\normalfont (pL$_{r/2}^{\sigma}$)}, the Cauchy--Schwarz inequality, the triangle inequality, Proposition~\ref{prop:dist_X_tildeX}, the moment bound~\eqref{eq:appl_help2}, and Proposition \ref{prop:EsupTildeX} yields
\begin{equation*}
\begin{split}
&\bigg\Vert \sup_{t \in [0,T]} \left| \sigma\big( t, X(t) \big) - \sigma\big( t, \widetilde{X}_{N}(t) \big) \right| \bigg\Vert_{L_\theta} \\
&\le c \cdot \bigg\Vert \sup_{t \in [0,T]} \big| X(t) - \widetilde{X}_{N}(t) \big| \cdot \left( 1 + \big| X(t) \big|^{r/2} + \big| \widetilde{X}_{N}(t) \big|^{r/2} \right) \bigg\Vert_{L_\theta} \\
&\le c \cdot \bigg\Vert \sup_{t \in [0,T]} \big| X(t) - \widetilde{X}_{N}(t) \big| \bigg\Vert_{L_{2\theta}} \cdot \bigg\Vert \sup_{t \in [0,T]} \left( 1 + \big| X(t) \big|^{r/2} + \big| \widetilde{X}_{N}(t) \big|^{r/2} \right) \bigg\Vert_{L_{2\theta}} \\
&\le c \cdot \bigg\Vert \sup_{t \in [0,T]} \big| X(t) - \widetilde{X}_{N}(t) \big| \bigg\Vert_{L_{2\theta}} \cdot \bigg( 1 + \bigg\Vert \sup_{t \in [0,T]} \big| X(t) \big|^{r/2} \bigg\Vert_{L_{2\theta}} + \bigg\Vert \sup_{t \in [0,T]} \big| \widetilde{X}_{N}(t) \big|^{r/2} \bigg\Vert_{L_{2\theta}} \bigg) \\
&\le c \cdot N^{-1/2}
\end{split}
\end{equation*}
for all $N \in \N$ where $\theta := \min\{ a, p/(2r+1) \}/3 \in [2/3,\infty)$. By letting $N$ tend to infinity, we eventually obtain \eqref{eq:ch3_inProb1}. Second, combining the growth condition~\hyperlink{ass:pG}{\normalfont (pG$_{(r+2)/2}^{\sigma}$)}, the triangle inequality, and Proposition~\ref{prop:EsupTildeX} yields
\begin{equation*}
\begin{split}
&\bigg\Vert \sup_{t \in [0,T]} \Big| \sigma\big( t, \widetilde{X}_{N}(t) \big) - \sigma_N\big( t, \widetilde{X}_{N}(t) \big) \Big| \bigg\Vert_{L_\theta} \\
&= (T/N)^{1/2} \cdot \left\Vert \sup_{t \in [0,T]} \left| \frac{\sigma\big( t, \widetilde{X}_{N}(t) \big) \cdot \big| \widetilde{X}_{N}(t) \big|^r}{1+(T/N)^{1/2} \cdot\big| \widetilde{X}_{N}(t) \big|^r} \right| \right\Vert_{L_\theta} \\
&\le (T/N)^{1/2} \cdot \bigg\Vert \sup_{t \in [0,T]} \big| \sigma\big( t, \widetilde{X}_{N}(t) \big) \big| \cdot \big| \widetilde{X}_{N}(t) \big|^{r} \bigg\Vert_{L_\theta} \\
&\le c \cdot N^{-1/2} \cdot \bigg\Vert \sup_{t \in [0,T]} \left( 1 + \big| \widetilde{X}_{N}(t) \big|^{(r+2)/2} \right) \cdot \big| \widetilde{X}_{N}(t) \big|^r \bigg\Vert_{L_\theta} \\
&\le c \cdot N^{-1/2} \cdot \bigg( \bigg\Vert \sup_{t \in [0,T]} \big| \widetilde{X}_{N}(t) \big|^{r} \bigg\Vert_{L_\theta} + \bigg\Vert \sup_{t \in [0,T]} \big| \widetilde{X}_{N}(t) \big|^{(3r+2)/2} \bigg\Vert_{L_\theta} \bigg) \\
&\le c \cdot N^{-1/2}
\end{split}
\end{equation*}
for all $N \in \N$ where $\theta := 2 \cdot (p-r)/(3r+2) \in [2,\infty)$. By letting $N$ tend to infinity, we eventually obtain \eqref{eq:ch3_inProb2}.
From \eqref{eq:ch3_inProbability} we next conclude that
\begin{equation}
\label{eq:theorem_ad_convergence_in_Prob}
\bigg( \frac{T}{N} \cdot \sum_{\ell=0}^{N-1} \big\vert \sigma_N\big(t_\ell^{(N)}, \widetilde{X}_N(t_\ell^{(N)}) \big) \big\vert_{\infty,2}^2 \bigg)^{1/2} \quad \xrightarrow[N \to \infty]{\Prob} \quad \bigg( \int_0^T \big\vert \sigma\big( t, X(t) \big) \big\vert_{\infty,2}^2 \, \mathrm{d}t \bigg)^{1/2}
\end{equation}
and
\begin{equation}
\label{eq:theorem_eq_convergence_in_Prob}
\max_{\ell \in \{ 0, \ldots, N-1 \}} \big\vert \sigma_N\big(t_\ell^{(N)}, \widetilde{X}_N(t_\ell^{(N)}) \big) \big\vert_{\infty,2} \quad \xrightarrow[N \to \infty]{\Prob} \quad \sup_{t \in [0,T]} \big\vert \sigma\big( t, X(t) \big) \big\vert_{\infty,2}. 
\end{equation}
As a final step, we prove that
\begin{equation}
\label{eq:uniform_integrability_1}
\Bigg( \bigg( \frac{T}{N} \cdot \sum_{\ell=0}^{N-1} \big\vert \sigma_N\big(t_\ell^{(N)}, \widetilde{X}_N(t_\ell^{(N)}) \big) \big\vert_{\infty,2}^2 \bigg)^{q/2} \Bigg)_{N \in \N} \quad \text{is uniformly integrable}
\end{equation}
and
\begin{equation}
\label{eq:uniform_integrability_2}
\Big( \max_{\ell \in \{ 0, \ldots, N-1 \}} \big\vert \sigma_N\big(t_\ell^{(N)}, \widetilde{X}_N(t_\ell^{(N)}) \big) \big\vert_{\infty,2}^q \Big)_{N \in \N} \quad \text{is uniformly integrable}.
\end{equation}
To this end, observe that $1 \le q < 2(p-r)/(r+2)$, and hence the growth condition~\hyperlink{ass:pG}{\normalfont (pG$_{(r+2)/2}^{\sigma}$)} and Proposition~\ref{prop:EsupTildeX} give
\begin{equation*}
\begin{split}
&\sup_{N \in \N} \bigg\Vert \sup_{t \in [0,T]} \big\vert \sigma_N\big(t, \widetilde{X}_N(t) \big) \big\vert_{\infty,2} \bigg\Vert_{L_{2(p-r)/(r+2)}} \le c \cdot \bigg( 1 + \sup_{N \in \N} \bigg\Vert \sup_{t \in [0,T]} \big\vert \widetilde{X}_N(t) \big\vert_{\infty,2} \bigg\Vert_{L_{p-r}}^{(r+2)/2} \bigg) < \infty,
\end{split}
\end{equation*}
which immediately implies \eqref{eq:uniform_integrability_1} and \eqref{eq:uniform_integrability_2}.
Combining \eqref{eq:theorem_ad_convergence_in_Prob} and \eqref{eq:uniform_integrability_1} as well as \eqref{eq:theorem_eq_convergence_in_Prob} and \eqref{eq:uniform_integrability_2} then finally yields \eqref{eq:theorem_ad_convergence_in_Lq} and \eqref{eq:theorem_eq_convergence_in_Lq}. 

Applying Theorems~\ref{th:theorem_ad} and \ref{th:theorem_eq} finishes the proof of this corollary.
\end{proof}

We illustrate the results of Corollary~\ref{cor:tamedEuler} by a numerical experiment. 

\begin{Example}
Consider the introductory SDE \eqref{eq:SDE_HES} regarding the Heston--$3/2$--model with parameters $d = 1$, $m = 1$, $T = 1$, $\alpha = 5$, $\beta = 1$, $\gamma = 1$, and $\xi = 1$. This SDE thus reads as
\begin{equation}
\label{eq:SDE_HES_example}
\begin{split}
\text{d}X(t) &= 5 \cdot X(t) \cdot \big( 1 - |X(t)| \big) \, \mathrm{d}t + |X(t)|^{3/2} \, \mathrm{d}W(t), \quad t \in [0,1], \\
X(0) &= 1.
\end{split}
\end{equation}

We are interested in strongly asymptotically optimal approximations with respect to the error~$e_2$, i.e., we fix $q=2$.
It is easy to see that the SDE~\eqref{eq:SDE_HES_example} satisfies all the assumptions of Corollary~\ref{cor:tamedEuler}. More precisely, Assumption~\hyperlink{ass:I}{\normalfont (I$_{p}$)} is satisfied for all $p \in [0,\infty)$, Assumption~\hyperlink{ass:H}{\normalfont (H)} is satisfied, Assumption~\hyperlink{ass:K}{\normalfont (K$_{p}$)} is satisfied for all $p \in [2,11]$, Assumption~\hyperlink{ass:M}{\normalfont (M$_{a}$)} is satisfied for all $a \in [2,6]$, and Assumption~\hyperlink{ass:pL}{\normalfont (pL$_{r}^{\mu}$)} is satisfied for all $r \in [1,\infty)$. For the rest of this example, we fix $p=11$, $a=6$, and $r=1$. \linebreak Moreover, let the functions $(\mu_N)_{N \in \N}$ and $(\sigma_N)_{N \in \N}$ be given by~\eqref{eq:muN_sigmaN_tamedEuler}. As indicated in the beginning of this subsection, we hereinafter refer to the modified EM schemes as tamed Euler schemes.

In view of \eqref{eq:theorem_tamedSchemes_ad} and \eqref{eq:theorem_tamedSchemes_eq}, we aim at visualizing that, for large $N \in \N$, the approximation errors $e_2( \widehat{X}_{N,2}^{\mathrm{ad}} )$ and $e_2( \widehat{X}_{N}^{\mathrm{eq}} )$ of the adaptive and of the equidistant tamed Euler schemes are close to $C_2^{\mathrm{ad}} \cdot ( \log( c( \widehat{X}_{N,2}^{\mathrm{ad}} ) ) / c( \widehat{X}_{N,2}^{\mathrm{ad}} ) )^{1/2}$ and $C_2^{\mathrm{eq}} \cdot (\log( N ) / N)^{1/2}$, respectively.

We thereby encounter three different approximation issues, namely, the approximation of the asymptotic constants $C_2^{\mathrm{ad}}$ and $C_2^{\mathrm{eq}}$, of the errors $e_2( \widehat{X}_{N,2}^{\mathrm{ad}} )$ and $e_2( \widehat{X}_{N}^{\mathrm{eq}} )$, and of the average number of evaluations $c( \widehat{X}_{N,2}^{\mathrm{ad}} )$.

Regarding the first approximation issue, we do not know numerically suitable closed-form expressions of the constants $C_2^{\mathrm{ad}}$ and $C_2^{\mathrm{eq}}$, nor of the solution, for the particular SDE \eqref{eq:SDE_HES_example}. Therefore, we estimate these constants via Monte Carlo simulations in which we approximate the solution by an equidistant tamed Euler scheme with a sufficiently large number of discretization points. More precisely, we estimate $C_2^{\mathrm{ad}}$ and $C_2^{\mathrm{eq}}$ by
\begin{equation*}
\widehat{C}_{2,M,N}^{\mathrm{ad}} := 2^{-1/2} \cdot \frac{1}{M} \cdot \sum_{m=1}^M \bigg( \frac{1}{N} \cdot \sum_{\ell=0}^{N-1} \big| \widehat{X}_{N,m}^{\mathrm{eq}}(t_\ell^{(N)}) \big|^3 \bigg)^{1/2}
\end{equation*}
and
\begin{equation*}
\widehat{C}_{2,M,N}^{\mathrm{eq}} := 2^{-1/2} \cdot \bigg( \frac{1}{M} \cdot \sum_{m=1}^M \max_{\ell \in \{ 0, \ldots, N \}} \big| \widehat{X}_{N,m}^{\mathrm{eq}}(t_\ell^{(N)}) \big|^3 \bigg)^{1/2},
\end{equation*}
respectively, where $M, N \in \N$ and where the random vectors
\begin{equation*}
\begin{split}
&\big( \widehat{X}_{N,m}^{\mathrm{eq}}(t_0^{(N)}), \ldots, \widehat{X}_{N,m}^{\mathrm{eq}}(t_N^{(N)}) \big), \quad m \in \{ 1, \ldots, M \},
\end{split}
\end{equation*}
are independent copies of $( \widehat{X}_{N}^{\mathrm{eq}}(t_0^{(N)}), \ldots, \widehat{X}_{N}^{\mathrm{eq}}(t_N^{(N)}) )$. Observe that for $C_2^{\mathrm{ad}}$, we approximate the integral occurring in its definition by left Riemann sums. Proposition \ref{prop:dist_X_tildeX} in Appendix \ref{sec:Appendix} implies that $\widehat{C}_{2,M,N}^{\mathrm{ad}}$ and $\widehat{C}_{2,M,N}^{\mathrm{eq}}$ tend to $C_2^{\mathrm{ad}}$ and $C_2^{\mathrm{eq}}$, respectively, as $M$ and $N$ tend to infinity. Figure \ref{figure:MC_constants} depicts simulations of $\widehat{C}_{2,M,2^{27}}^{\mathrm{ad}}$ and $\widehat{C}_{2,M,2^{27}}^{\mathrm{eq}}$ in dependence of $M$ along with their corresponding $95 \%$ CLT-based confidence intervals. Furthermore, we utilize the specific approximations $C_2^{\mathrm{ad}} \approx 0.7080$ and $C_2^{\mathrm{eq}} \approx 1.7749$ obtained from realizations of $\widehat{C}_{2,10^4,2^{27}}^{\mathrm{ad}}$ and of $\widehat{C}_{2,10^4,2^{27}}^{\mathrm{eq}}$, respectively, for the black lines featured in Figure \ref{figure:MC_errors}.

\begin{figure}[H]
\begin{center}
\input{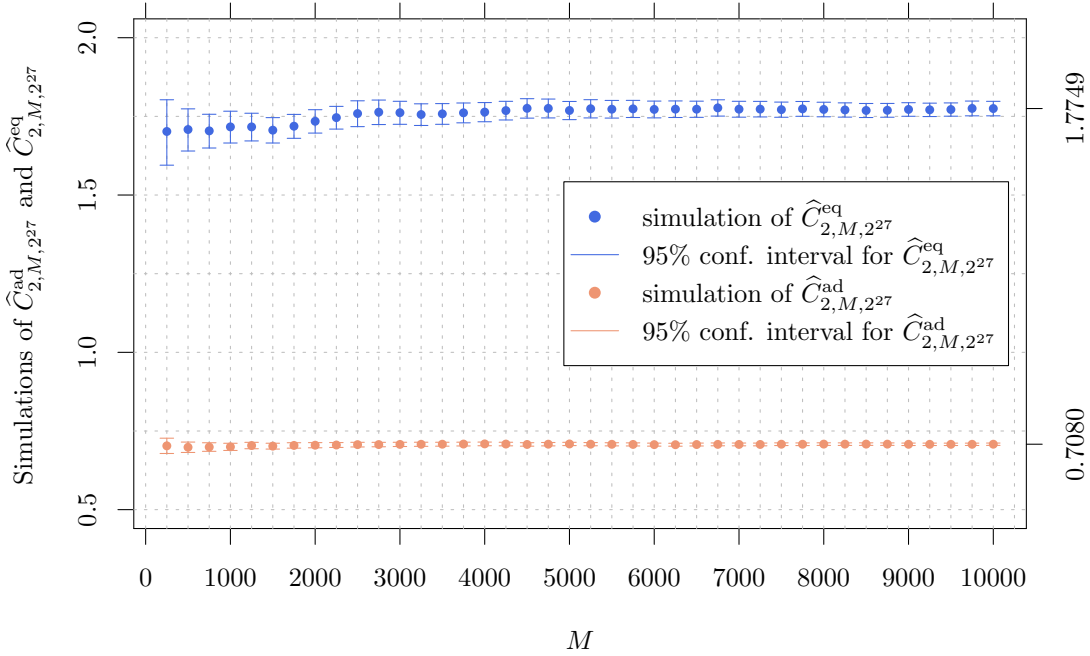}
\end{center}
\caption{Monte Carlo approximations of the asymptotic constants $C_2^{\mathrm{ad}}$ and $C_2^{\mathrm{eq}}$ for the SDE \eqref{eq:SDE_HES_example}.}
\label{figure:MC_constants}
\end{figure}

The remaining two approximation issues are addressed simultaneously. Similarly to the approximation of the asymptotic constants, we again estimate the solution by a sufficiently accurate equidistant tamed Euler scheme, and we approximate the errors of the equidistant tamed Euler schemes as well as the errors and the average numbers of evaluations of the adaptive tamed Euler schemes via Monte Carlo simulations. More precisely, for each $N \in \N$ we estimate $e_2(\widehat{X}_{N}^{\mathrm{eq}})$, $e_2(\widehat{X}_{N,2}^{\mathrm{ad}})$, and $c( \widehat{X}_{N,2}^{\mathrm{ad}} )$ by
\begin{equation*}
\widehat{e}_{2,M,N^*,N}^{\mathrm{eq}} := \bigg( \frac{1}{M} \cdot \sum_{m=1}^M \max_{\ell \in \{ 0, \ldots, N^* \}} \big| \widehat{X}_{N^*,m}^{\mathrm{eq}}(t_\ell^{(N^*)}) - \widehat{X}_{N,m}^{\mathrm{eq}}(t_\ell^{(N^*)}) \big|^2 \bigg)^{1/2},
\end{equation*}
\begin{equation*}
\widehat{e}_{2,M,N^*,N}^{\mathrm{ad}} := \bigg( \frac{1}{M} \cdot \sum_{m=1}^M \max_{\ell \in \{ 0, \ldots, N^* \}} \big| \widehat{X}_{N^*,m}^{\mathrm{eq}}(t_\ell^{(N^*)}) - \widehat{X}_{N,2,m}^{\mathrm{ad}}(t_\ell^{(N^*)}) \big|^2 \bigg)^{1/2},
\end{equation*}
and
\[ \widehat{c}_{M,N} := \frac{1}{M} \cdot \sum_{m=1}^M \nu_{N,2,m}^{\mathrm{ad}}, \]
respectively, where $M,N^* \in \N$ and where the random vectors
\begin{equation*}
\begin{split}
&\big( \widehat{X}_{N^*,m}^{\mathrm{eq}}(t_0^{(N^*)}), \ldots, \widehat{X}_{N^*,m}^{\mathrm{eq}}(t_{N^*}^{(N^*)}) \big), \quad m \in \{ 1, \ldots, M \},
\end{split}
\end{equation*}
are independent copies of $( \widehat{X}_{N^*}^{\mathrm{eq}}(t_0^{(N^*)}), \ldots, \widehat{X}_{N^*}^{\mathrm{eq}}(t_{N^*}^{(N^*)}) )$, the random vectors
\begin{equation*}
\begin{split}
&\big( \widehat{X}_{N,m}^{\mathrm{eq}}(t_0^{(N^*)}), \ldots, \widehat{X}_{N,m}^{\mathrm{eq}}(t_{N^*}^{(N^*)}) \big), \quad m \in \{ 1, \ldots, M \},
\end{split}
\end{equation*}
are independent copies of $( \widehat{X}_{N}^{\mathrm{eq}}(t_0^{(N^*)}), \ldots, \widehat{X}_{N}^{\mathrm{eq}}(t_{N^*}^{(N^*)}) )$, the random vectors
\begin{equation*}
\begin{split}
&\big( \widehat{X}_{N,2,m}^{\mathrm{ad}}(t_0^{(N^*)}), \ldots, \widehat{X}_{N,2,m}^{\mathrm{ad}}(t_{N^*}^{(N^*)}) \big), \quad m \in \{ 1, \ldots, M \},
\end{split}
\end{equation*}
are independent copies of $( \widehat{X}_{N,2}^{\mathrm{ad}}(t_0^{(N^*)}), \ldots, \widehat{X}_{N,2}^{\mathrm{ad}}(t_{N^*}^{(N^*)}) )$, and the random variables
\[ \nu_{N,2,m}^{\mathrm{ad}}, \quad m \in \{ 1, \ldots, M \}, \]
are independent copies of $\nu_{N,2}^{\mathrm{ad}}$.
For the adaptive tamed Euler schemes, we used $k_N := \linebreak \lceil N \cdot (\log(N+1))^{-1/2} \rceil$ for all $N \in \N$ on every computation.
Numerical estimates $(N,\widehat{e}_{2,10^4,2^{27},N}^{\mathrm{eq}})$, $N \in \{ 2^6, 2^8, \ldots, 2^{20} \}$, and $(\widehat{c}_{10^3,N},\widehat{e}_{2,10^3,2^{27},N}^{\mathrm{ad}})$, $N \in \{ 2^7, 2^9, \ldots, 2^{21} \}$, are visualized in Figure \ref{figure:MC_errors}. \hfill $\Diamond$

\begin{figure}[H]
\begin{center}
\input{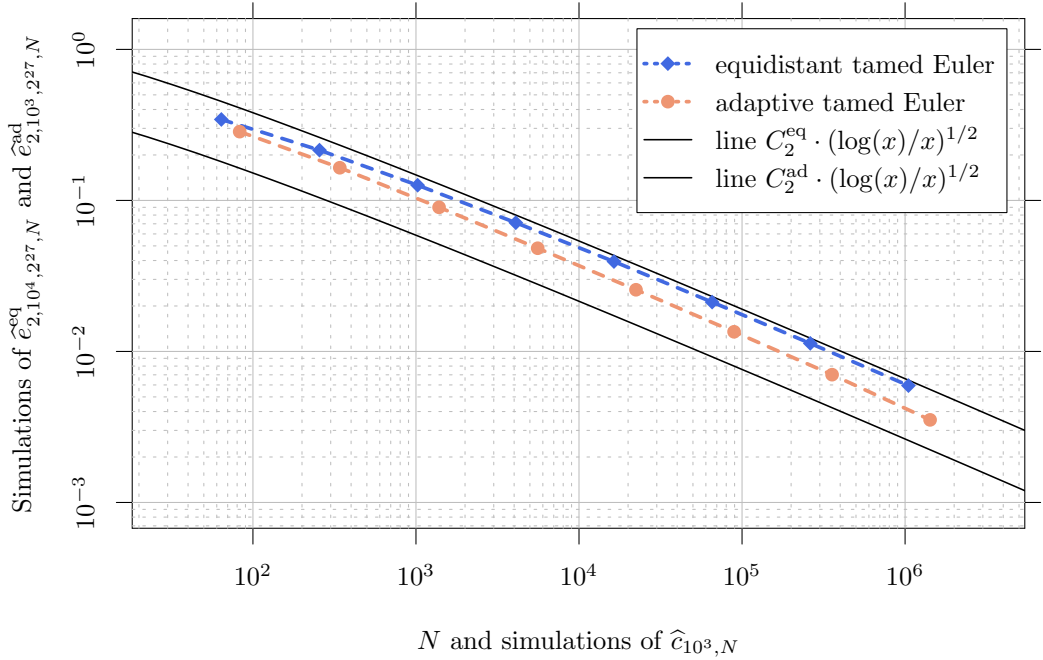}
\end{center}
\caption{Monte Carlo approximations of the errors $e_2(\widehat{X}_{N}^{\mathrm{eq}})$ and $e_2(\widehat{X}_{N,2}^{\mathrm{ad}})$ versus $N$ and Monte Carlo approximations of the average number of evaluations $c(\widehat{X}_{N,2}^{\mathrm{ad}})$ for the SDE \eqref{eq:SDE_HES_example}.}
\label{figure:MC_errors}
\end{figure}

\end{Example}

\section{Proofs}
\label{sec:Proofs}

In the following, we prove the theorems presented in Section~\ref{sec:MainTheorems} by showing asymptotic lower bounds relating to \eqref{eq:theorem_minErrors_ad} and \eqref{eq:theorem_minErrors_eq} as well as asymptotic upper bounds relating to \eqref{eq:theorem_tamedSchemes_ad} and \eqref{eq:theorem_tamedSchemes_eq}. The structure of the corresponding proofs is to a large extent based on techniques developed in Müller-Gronbach~\cite{tmg2002}.
 
Throughout this section, let $(\mu_N)_{N \in \N}$ and $(\sigma_N)_{N \in \N}$ be measurable functions as per Section~\ref{sec:Schemes}. In addition, let $(k_N)_{N \in \N}$ be a sequence of natural numbers satisfying the limits~\eqref{eq:limits_kN}, and let $c$ denote unspecified positive constants that may vary at every occurrence and that may only depend on $T$, $d$, $m$, and the parameters and constants from assumptions used in the respective lemmas.

%
%
%

For the convenience of the reader, we also provide a lemma containing a simple subsequence argument that will be employed in the proofs of the Lemmas~\ref{lemma:liminf_ad} and \ref{lemma:liminf_eq}.

\begin{Lemma}
\label{lemma:appendixB_liminf_subsequence_argument}
Let $(a_N)_{N \in \N}$ be a sequence of real numbers that is bounded from below and let $C \in \R$. Then the following are equivalent:
\begin{itemize}
\item[\textit{(i)}]  It holds that $\liminf_{N \to \infty} a_N \ge C$.
\item[\textit{(ii)}] For every subsequence $(a_{N_\kappa})_{\kappa \in \N}$ of $(a_N)_{N \in \N}$ there exists a subsequence $(a_{N_{\kappa_n}})_{n \in \N}$ of \linebreak $(a_{N_\kappa})_{\kappa \in \N}$ such that $\liminf_{n \to \infty} a_{N_{\kappa_n}} \ge C$.
\end{itemize}
\end{Lemma}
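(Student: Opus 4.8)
The plan is to prove the two implications separately, each by an elementary subsequence argument.

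For \textit{(i)}~$\Rightarrow$~\textit{(ii)}: I would simply recall the standard fact that for every subsequence $(a_{N_\kappa})_{\kappa \in \N}$ of $(a_N)_{N \in \N}$ one has $\liminf_{\kappa \to \infty} a_{N_\kappa} \ge \liminf_{N \to \infty} a_N$, since passing to a subsequence cannot decrease the lower limit. Hence, assuming \textit{(i)}, the trivial refinement given by $\kappa_n := n$ already yields a subsequence $(a_{N_{\kappa_n}})_{n \in \N} = (a_{N_\kappa})_{\kappa \in \N}$ of $(a_{N_\kappa})_{\kappa \in \N}$ with $\liminf_{n \to \infty} a_{N_{\kappa_n}} \ge \liminf_{N \to \infty} a_N \ge C$, which is exactly \textit{(ii)}.

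For \textit{(ii)}~$\Rightarrow$~\textit{(i)}: I would argue by contraposition. Suppose $\liminf_{N \to \infty} a_N < C$. Since $(a_N)_{N \in \N}$ is bounded from below, $\ell := \liminf_{N \to \infty} a_N$ is a real number with $\ell < C$, and by definition of the lower limit there exists a subsequence $(a_{N_\kappa})_{\kappa \in \N}$ of $(a_N)_{N \in \N}$ with $\lim_{\kappa \to \infty} a_{N_\kappa} = \ell$. Now every subsequence $(a_{N_{\kappa_n}})_{n \in \N}$ of this particular subsequence still converges to $\ell$, so $\liminf_{n \to \infty} a_{N_{\kappa_n}} = \ell < C$. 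Thus, for the subsequence $(a_{N_\kappa})_{\kappa \in \N}$, no refinement with lower limit at least $C$ exists, i.e., \textit{(ii)} fails. This establishes the implication and completes the proof.

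There is no genuine obstacle here; the statement is a routine ``liminf along a universal subsequence'' criterion, and the only point requiring a moment's attention is the use of the boundedness-from-below hypothesis: it guarantees that $\liminf_{N \to \infty} a_N$, in the case it is smaller than $C$, is a finite real number, so that a subsequence converging to it can actually be extracted. The same reasoning works verbatim with $\liminf$ replaced by $\limsup$ and the inequalities reversed, but that variant is not needed in the sequel.
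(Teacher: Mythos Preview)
Your proof is correct. The paper itself does not supply a proof of this lemma; it is stated without proof as a ``simple subsequence argument'' provided for the reader's convenience, so your elementary argument is exactly what is needed to fill the gap.
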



\subsection{Asymptotic lower bounds}

We start by introducing some notation that will be used in this subsection. For all $q \in [1,\infty)$, for all $N \in \N$, for all $\alpha_1, \ldots, \alpha_N \in [0,\infty)$, and for all independent real-valued Brownian bridges $B_1, \ldots, B_N$ on $[0,1]$ from $0$ to $0$ we put
\[ \mathcal{M}_q(\alpha_1, \ldots, \alpha_N) := \E\bigg[ \max_{\ell \in \{ 1, \ldots, N \}} \Big( \alpha_\ell \cdot \sup_{t \in [0,1]} |B_\ell(t)| \Big)^q \bigg] \in [0,\infty) \]
and
\[ \mathcal{M}_q(N) := \mathcal{M}_q(\underbrace{1, \ldots, 1}_{N \text{ times}}). \]

First, we prove an asymptotic lower bound for the $N$th minimal errors in the classes of adaptive approximations.

\begin{Lemma}
\label{lemma:liminf_ad}
Assume the setting of Theorem~\ref{th:theorem_ad}. Then it holds that
\begin{equation}
\label{eq:ear}
\liminf_{N \to \infty} \big( N / \log(N) \big)^{1/2} \cdot \inf\Big\{ e_q\big( \widehat{X} \big) \; \Big| \; \widehat{X} \in \mathbb{X}_N^{\mathrm{ad}} \Big\} \ge C_q^{\mathrm{ad}}.
\end{equation}
\end{Lemma}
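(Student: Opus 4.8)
The plan is to transpose the argument of Müller-Gronbach~\cite{tmg2002} for the globally Lipschitz case to the present mild setting, the latter being accommodated by a stopping-time localization in the spirit of Hefter, Herzwurm, and Müller-Gronbach~\cite{tmg2017}. By Lemma~\ref{lemma:appendixB_liminf_subsequence_argument} it suffices to produce, from an arbitrary subsequence, a further subsequence along which $\liminf (N/\log(N))^{1/2}\cdot\inf\{e_q(\widehat X)\mid\widehat X\in\X_N^{\mathrm{ad}}\}\ge C_q^{\mathrm{ad}}$. Fix such a subsequence and, for each $N$ in it, a near-minimizer $\widehat X_N\in\X_N^{\mathrm{ad}}$ with evaluation index $\nu_N$ (so $\E[\nu_N]\le N$) and evaluation times $0=\tau_{N,0}<\tau_{N,1}<\dots<\tau_{N,\nu_N}\le\tau_{N,\nu_N+1}:=T$. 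Since feeding an approximation extra evaluations of $W$ cannot increase $e_q$, I may enlarge the evaluation sites by a deterministic equidistant grid of $m_N:=\lfloor N/\log(N)\rfloor$ points, obtaining a random partition $\pi_N$ with intervals $I_{N,1},\dots,I_{N,n_N}$, where $n_N\le\nu_N+m_N$ (so $\E[n_N]\le N(1+o(1))$, and $n_N\ge m_N$ forces $\log(n_N)\ge(1-o(1))\log(N)$) and mesh $\le T/m_N\to0$. Writing $\mathcal G_N$ for the $\sigma$-field generated by $\xi$ and the values of $W$ on the enlarged set, $\widehat X_N$ is $\mathcal G_N$-measurable, so bounding $\sup_{[0,T]}$ from below by the maximum over the $I_{N,k}$ and applying the conditional Jensen inequality yields
\begin{equation*}
e_q(\widehat X_N)^q \ \ge\ \E\Big[\max_{1\le k\le n_N}\E\big[\,\textstyle\sup_{t\in I_{N,k}}\max_i|X_i(t)-\widehat X_{N,i}(t)|^q\,\big|\,\mathcal G_N\big]\Big],
\end{equation*}
and it remains to bound the inner conditional expectations below, uniformly over $\mathcal G_N$-measurable curves.

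For the per-interval bound I would localize at the first time $X$ or $\sigma(\cdot,X(\cdot))$ leaves a large ball — legitimate because $C_q^{\mathrm{ad}}<\infty$ forces $\sup_{t\in[0,T]}|\sigma(t,X(t))|_{\infty,2}<\infty$ almost surely and, by Proposition~\ref{prop:EsupX}, the relevant truncated moments of $\sup_{t\in[0,T]}|X(t)|$ are finite — thereby reducing to the situation where, on a short interval $I=[a,b]$ of length $h$, the process $X$ behaves, conditionally on $\mathcal F(a)$ and on $W(b)-W(a)$, like the affine interpolation of its endpoints plus $\sigma(a,X(a))$ times a Brownian bridge of time-length $h$, up to a remainder negligible relative to $h^{1/2}$; here the heuristic is that genuine irregularity of $\sigma$ only makes the conditional law of $X$ on $I$ harder to predict, so the frozen-coefficient Brownian bridge is the least favourable case. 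Selecting in each interval the component index realizing $|\sigma(a,X(a))|_{\infty,2}$ and invoking the symmetrization trick (a Brownian bridge $B$ has the same law as $-B$, $x\mapsto x^q$ is convex, and $|B-g|+|B+g|\ge2|B|$ pointwise) shows that for any $\mathcal G_N$-measurable $g$,
\begin{equation*}
\E\big[\,\textstyle\sup_{t\in I_{N,k}}\max_i|X_i(t)-g_i(t)|^q\,\big|\,\mathcal G_N\big]\ \ge\ (1-o(1))\,\alpha_{N,k}^q\,\E\big[\,\textstyle\sup_{t\in[0,1]}|B_k(t)|^q\,\big],
\end{equation*}
with $\alpha_{N,k}:=|\sigma(\tau_{N,k-1},X(\tau_{N,k-1}))|_{\infty,2}\cdot|I_{N,k}|^{1/2}$ and $B_1,\dots,B_{n_N}$ independent standard Brownian bridges given the $\sigma$-field generated by $\xi$, the $\tau_{N,k}$, and the $X(\tau_{N,k})$. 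Taking conditional expectations reduces the right-hand side of the first display to $(1-o(1))\,\E[\mathcal M_q(\alpha_{N,1},\dots,\alpha_{N,n_N})]$. Making this remainder control uniform over the random partition $\pi_N$ is the step I expect to be the main obstacle.

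It then remains to perform the deterministic and the stochastic optimization. On the deterministic side one shows $\mathcal M_q(n)\sim 2^{-q/2}(\log n)^{q/2}$ — the extreme-value asymptotics of the maximum of $n$ i.i.d.\ copies of $\sup_{[0,1]}|B|$, whose tail is $\sim 2e^{-2x^2}$ — and, more precisely, the uniform lower bound $\mathcal M_q(\alpha_1,\dots,\alpha_n)\ge 2^{-q/2}(1-o(1))\,(\frac1n\sum_{k}\alpha_k^2)^{q/2}(\log n)^{q/2}$ as $n\to\infty$, equal weights being asymptotically the least favourable allocation for a prescribed $\sum_k\alpha_k^2$. Since $\sum_k\alpha_{N,k}^2$ is a left Riemann sum for $V:=\int_0^T|\sigma(t,X(t))|_{\infty,2}^2\,\mathrm dt$ along a partition of mesh tending to $0$, it converges a.s.\ to $V$; restricting to the high-probability event $\{\nu_N\le N\log(N)\}$ (so that also $\log(n_N)=(1+o(1))\log(N)$) and combining the last two displays gives $e_q(\widehat X_N)^q\ge 2^{-q/2}(1-o(1))(\log(N))^{q/2}\,\E[\mathbf 1_{\{\nu_N\le N\log(N)\}}(V/n_N)^{q/2}]$.

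Finally I would minimize $\E[(V/n_N)^{q/2}]$ over all $\mathcal F$-measurable $n_N\ge1$ with $\E[n_N]\le N(1+o(1))$: a Lagrange/Hölder computation whose optimum $n_N\propto V^{q/(q+2)}$ is precisely where the exponent $2q/(q+2)$ defining $C_q^{\mathrm{ad}}$ comes from, and which yields $\E[(V/n_N)^{q/2}]\ge(1-o(1))\,N^{-q/2}\,(\E[V^{q/(q+2)}])^{(q+2)/2}$. Hence $(N/\log(N))^{q/2}\,e_q(\widehat X_N)^q\ge 2^{-q/2}(1-o(1))\,(\E[V^{q/(q+2)}])^{(q+2)/2}=(1-o(1))\,(C_q^{\mathrm{ad}})^q$, and letting $N\to\infty$ along the subsequence, together with the fact that the $\widehat X_N$ were chosen as near-minimizers, gives \eqref{eq:ear}. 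Besides the uniformity of the remainder control in the second paragraph, the only delicate points are the uniformity of the $\mathcal M_q$-lower bound over arbitrary weight vectors and the treatment of paths on which the scheme places far fewer evaluations than $N$, both of which are handled by the localization and by the good-event truncation above.
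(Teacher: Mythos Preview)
Your strategy has a genuine gap: you try to analyze $X$ directly on each random subinterval, freezing $\sigma$ at the left endpoint and treating the discrepancy as a remainder ``negligible relative to $h^{1/2}$''. You yourself flag uniform control of this remainder as ``the main obstacle'', and under the hypotheses of Theorem~\ref{th:theorem_ad} it is not available. That theorem assumes \emph{only} existence of a unique solution, $C_q^{\mathrm{ad}}<\infty$, the strong order-$1/2$ convergence \eqref{eq:theorem_ad_strong_convergence} of the continuous-time modified EM scheme, and the $L_q$ convergence \eqref{eq:theorem_ad_convergence_in_Lq}. There is no local Lipschitz condition, no Khasminskii-type bound, and no moment bound on $X$ beyond what $C_q^{\mathrm{ad}}<\infty$ encodes. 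In particular, Proposition~\ref{prop:EsupX}, which you invoke for the localization, requires \hyperlink{ass:locL}{(locL)}, \hyperlink{ass:K}{(K$_p$)} and \hyperlink{ass:pG}{(pG$_r^\sigma$)}, none of which are part of the setting. Without such regularity you cannot justify the ``frozen-coefficient plus Brownian bridge plus $o(h^{1/2})$'' picture for $X$ itself, so the per-interval lower bound in your second paragraph is unsupported.

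The paper's device is precisely to avoid this remainder analysis. Instead of comparing $\widehat X_N$ to $X$, one compares it to the continuous-time modified EM scheme $\widetilde X_{k_N}$: by the inverse triangle inequality and \eqref{eq:theorem_ad_strong_convergence},
\[
e_q(\widehat X_N)\ \ge\ \big\|\,\|\widetilde X_{k_N}-\widehat X_N\|_\infty\big\|_{L_q}-c\,k_N^{-1/2},
\]
and the loss $c\,k_N^{-1/2}$ is negligible on the scale $(\log N/N)^{1/2}$ thanks to \eqref{eq:limits_kN}. The gain is that $\widetilde X_{k_N}$ has \emph{exactly} piecewise-constant drift and diffusion on the grid $\{t_\ell^{(k_N)}\}$, so on each subinterval $\widetilde X_{k_N}(t)-\E[\widetilde X_{k_N}(t)\mid D_N]=\sigma_{k_N}(t_\ell^{(k_N)},\widetilde X_{k_N}(t_\ell^{(k_N)}))\cdot(W(t)-\E[W(t)\mid D_N])$ with no remainder at all. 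The symmetrization argument (conditional law of $W$ given $D_N$ is symmetric) then yields $\|\|\widetilde X_{k_N}-\widehat X_N\|_\infty\|_{L_q}\ge\|\|\widetilde X_{k_N}-\E[\widetilde X_{k_N}\mid D_N]\|_\infty\|_{L_q}$, after which the weighted-Brownian-bridge lower bound \eqref{eq:slice} and the Hölder step \eqref{eq:steigbuegel} proceed as in Müller-Gronbach~\cite{tmg2002}. The passage from weights built out of $\sigma_{k_N}(\cdot,\widetilde X_{k_N}(\cdot))$ to $\int_0^T|\sigma(t,X(t))|_{\infty,2}^2\,\mathrm dt$ is handled not by a Riemann-sum argument on $X$ (which again would need regularity you do not have) but directly by assumption~\eqref{eq:theorem_ad_convergence_in_Lq}, together with a subsequence/Fatou argument via Lemma~\ref{lemma:appendixB_liminf_subsequence_argument}. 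In short: the modified EM scheme is not just the upper-bound method, it is also the intermediary that makes the lower bound go through under the minimal abstract hypotheses.
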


\begin{proof}
Fix $N \in \N$ with $N > \exp(2)$ and $\widehat{X}_N \in \mathbb{X}_N^{\mathrm{ad}}$ for the moment. Due to the inverse triangle inequality and assumption~\eqref{eq:theorem_ad_strong_convergence}, it holds that
\begin{equation}
\label{eq:hammer}
\begin{split}
e_q\big( \widehat{X}_N \big) &= \Big\Vert \big\Vert X - \widehat{X}_N \big\Vert_\infty \Big\Vert_{L_q} \ge \Big\Vert \big\Vert \widetilde{X}_{k_N} - \widehat{X}_N \big\Vert_\infty \Big\Vert_{L_q} - c \cdot k_N^{-1/2}.
\end{split}
\end{equation}
Let $D_N$ denote the entire data used by $\widehat{X}_N$ in the sense of Section~\ref{sec:Classes}, define $\Psi_N$ to be the set of observation sites of the driving Brownian motion employed in $\widehat{X}_N$, and put $\nu_N := \# \Psi_N$. As a first step, we show that the distance between $\widetilde{X}_{k_N}$ and $\widehat{X}_N$ as above is greater or equal than the respective distance between $\widetilde{X}_{k_N}$ and $\E[ \widehat{X}_{k_N} \, | \, D_N ]$. Because of the first limit in \eqref{eq:limits_kN}, we may actually assume that $\{ t_1^{(k_N)}, \ldots, t_{k_N}^{(k_N)} \} \subseteq \Psi_N$. Hence, each $\widetilde{X}_{k_N}(t_\ell^{(k_N)})$, $\ell \in \{0, \ldots, k_N \}$, is measurable with respect to the $\sigma$-algebra generated by $D_N$ and we thereby obtain
\begin{equation}
\label{eq:newark}
\begin{split}
\widetilde{X}_{k_N}(t) - \E\big[ \widetilde{X}_{k_N}(t) \, \big| \, D_N \big]
&= \sigma_{k_N}\big( t_\ell^{(k_N)}, \widetilde{X}_{k_N}(t_\ell^{(k_N)}) \big) \cdot \Big( W(t) - \E\big[ W(t) \, \big| \, D_N \big] \Big)
\end{split}
\end{equation}
for all $\ell \in \{ 0, \ldots, k_N-1 \}$ and for all $t \in (t_\ell^{(k_N)},t_{\ell+1}^{(k_N)}]$. Similarly to the derivations of the Lemmas~1 and 2 in Yaroslavtseva~\cite{yaroslavtseva2017}, one shows that for $\Prob^{D_N}$-almost all $(x,y) \in \R^d \times \bigcup_{n \in \N} \R^m$ it holds that
\[ \Prob^{W | D_N = (x,y)} = \Prob^{-W | D_N = (x,y)}, \]
which along with \eqref{eq:newark} yields
\[ \Prob^{\widetilde{X}_{k_N} - \E[ \widetilde{X}_{k_N} | D_N ] \, | \, D_N = (x,y)} = \Prob^{-\widetilde{X}_{k_N} + \E[ \widetilde{X}_{k_N} | D_N ] \, | \, D_N = (x,y)}. \]
We thus conclude that $(\widetilde{X}_{k_N} - \E[ \widetilde{X}_{k_N} \, | \, D_N ], \, D_N)$ and $(-\widetilde{X}_{k_N} + \E[ \widetilde{X}_{k_N} \, | \, D_N ], \, D_N)$ are identically distributed. Since, additionally, both $\widehat{X}_N$ and $\E[ \widetilde{X}_{k_N} \, | \, D_N ]$ are measurable functions of $D_N$, we consequently find that $\widetilde{X}_{k_N} - \widehat{X}_N$ and $2 \E[ \widetilde{X}_{k_N} \, | \, D_N ] - \widetilde{X}_{k_N} - \widehat{X}_N$ are also identically distributed. Therefore, we obtain
\begin{equation}
\label{eq:amboss}
\begin{split}
&\Big\Vert \big\Vert \widetilde{X}_{k_N} - \widehat{X}_N \big\Vert_\infty \Big\Vert_{L_q} \\
&= 1/2 \cdot \bigg( \Big\Vert \big\Vert \widetilde{X}_{k_N} - \widehat{X}_N \big\Vert_\infty \Big\Vert_{L_q} + \Big\Vert \big\Vert -2 \E[ \widetilde{X}_{k_N} \, | \, D_N ] + \widetilde{X}_{k_N} + \widehat{X}_N \big\Vert_\infty \Big\Vert_{L_q} \bigg) \\
&\ge 1/2 \cdot \Big\Vert \big\Vert \widetilde{X}_{k_N} - \widehat{X}_N -2 \E[ \widetilde{X}_{k_N} \, | \, D_N ] + \widetilde{X}_{k_N} + \widehat{X}_N \big\Vert_\infty \Big\Vert_{L_q} \\
&= \Big\Vert \big\Vert \widetilde{X}_{k_N} - \E\big[ \widetilde{X}_{k_N} \, \big| \, D_N \big] \big\Vert_\infty \Big\Vert_{L_q}.
\end{split}
\end{equation}
Almost identically to the proof of inequality (12) in Müller-Gronbach~\cite{tmg2002} and the ensuing inequality therein, one subsequently shows that
\begin{equation}
\label{eq:slice}
\E\Big[ \big\Vert \widetilde{X}_{k_N} - \E\big[ \widetilde{X}_{k_N} \, \big| \, D_N \big] \big\Vert_\infty^q \; \Big| \; D_N \Big] \ge \A_{k_N}^q \cdot \delta_N^{-q/2} \cdot \mathcal{M}_q(\delta_N)
\end{equation}
holds almost surely where $\A_{k_N}$ is defined as in \eqref{eq:tildeA_kN} and where
\[ \delta_N := \max\bigg\{ 1, \sum_{\ell \in L_N} \Big( \#\big( \Psi_N \cap (t_\ell^{(k_N)}, t_{\ell+1}^{(k_N)}) \big) +1 \Big) \bigg\} \]
with
\[ L_N := \left\{ \ell \in \{ 0, \ldots, k_N-1 \} \; \middle| \; \big\vert \sigma_{k_N}\big( t_\ell^{(k_N)}, \widetilde{X}_{k_N}(t_\ell^{(k_N)}) \big) \big\vert_{\infty,2} > 0 \right\}. \]
By using arguments in a similar way to the ones in the proof of the last inequality on page 681 in Müller-Gronbach~\cite{tmg2002}, we arrive at
\begin{equation}
\label{eq:steigbuegel}
\begin{split}
&\big( N/ \log(N) \big)^{1/2} \cdot \left\Vert \A_{k_N} \cdot \delta_N^{-1/2} \cdot \big( \mathcal{M}_q(\delta_N) \big)^{1/q} \right\Vert_{L_q} \\
&\ge \left\Vert \mathcal{A}_{k_N} \cdot \big( \log(\delta_N) \big)^{-1/2} \cdot \big( \mathcal{M}_q(\delta_N) \big)^{1/q} \cdot \mathds{1}_{\{ \delta_N > \exp(2) \} \cap \{ \int_0^T \vert \sigma(t, X(t) ) \vert_{\infty,2}^2 \, \mathrm{d}t > 0 \}} \right\Vert_{L_{2q/(q+2)}}.
\end{split}
\end{equation}

Combining \eqref{eq:hammer}, \eqref{eq:amboss}, \eqref{eq:slice}, \eqref{eq:steigbuegel}, and the second limit in~\eqref{eq:limits_kN} yields
\begin{equation}
\label{eq:almost_ear}
\begin{split}
&\liminf_{N \to \infty} \big( N/ \log(N) \big)^{1/2} \cdot \inf\Big\{ e_q\big( \widehat{X} \big) \; \Big| \; \widehat{X} \in \mathbb{X}_N^{\mathrm{ad}} \Big\} \ge \liminf_{N \to \infty} \big\Vert \alpha^{(N)} \big\Vert_{L_{2q/(q+2)}} \\
\end{split}
\end{equation}
where
\[ \alpha^{(N)} := \mathcal{A}_{k_N} \cdot \big( \log(\delta_N) \big)^{-1/2} \cdot \big( \mathcal{M}_q(\delta_N) \big)^{1/q} \cdot \mathds{1}_{\{ \delta_N > \exp(2) \} \cap \{ \int_0^T \vert \sigma(t, X(t) ) \vert_{\infty,2}^2 \, \mathrm{d}t > 0 \}}. \]

Next, we use the subsequence argument that is provided by Lemma \ref{lemma:appendixB_liminf_subsequence_argument} to conclude \eqref{eq:ear} from~\eqref{eq:almost_ear}. First of all, assumption~\eqref{eq:theorem_ad_convergence_in_Lq} implies
\begin{equation}
\label{eq:laeppchen}
\mathcal{A}_{k_N} \quad \xrightarrow[N \to \infty]{\Prob} \quad \bigg( \int_0^T \big\vert \sigma\big( t, X(t) \big) \big\vert_{\infty,2}^2 \, \mathrm{d}t \bigg)^{1/2}.
\end{equation}
Now let $(\alpha^{(N_\kappa)})_{\kappa \in \N}$ be a subsequence of $(\alpha^{(N)})_{N \in \N}$. In view of \eqref{eq:laeppchen}, there exists a subsequence $(\mathcal{A}_{k_{N_{\kappa_n}}})_{n \in \N}$ of $(\mathcal{A}_{k_{N_\kappa}})_{\kappa \in \N}$ such that
\begin{equation}
\label{eq:laeppchen_almostsurely}
\mathcal{A}_{k_{N_{\kappa_n}}} \quad \xrightarrow[n \to \infty]{\text{a.s.}} \quad \bigg( \int_0^T \big\vert \sigma\big( t, X(t) \big) \big\vert_{\infty,2}^2 \, \mathrm{d}t \bigg)^{1/2}.
\end{equation}
Some tedious calculations using \eqref{eq:laeppchen_almostsurely} show
\begin{equation}
\label{eq:ohrmuschel}
\begin{split}
&\Prob\bigg( \Big\{ \lim_{n \to \infty} \delta_{N_{\kappa_n}} = \infty \Big\} \cap \bigg\{ \int_0^T \big\vert \sigma\big( t, X(t) \big) \big\vert_{\infty,2}^2 \, \mathrm{d}t > 0 \bigg\} \bigg) \\
&= \Prob\bigg( \bigg\{ \int_0^T \big\vert \sigma\big( t, X(t) \big) \big\vert_{\infty,2}^2 \, \mathrm{d}t > 0 \bigg\} \bigg).
\end{split}
\end{equation}
On the one hand, we clearly have
\begin{equation}
\label{eq:keineAhnung1}
\begin{split}
&\Prob\bigg( \bigg\{ \liminf_{n \to \infty} \alpha^{(N_{\kappa_n})} \ge 2^{-1/2} \cdot \bigg( \int_0^T \big\vert \sigma\big( t, X(t) \big) \big\vert_{\infty,2}^2 \, \mathrm{d}t \bigg)^{1/2} \bigg\} \\
&\hspace{0.55cm}\cap \bigg\{ \int_0^T \big\vert \sigma\big( t, X(t) \big) \big\vert_{\infty,2}^2 \, \mathrm{d}t = 0 \bigg\} \bigg) \\
&= \Prob\bigg( \bigg\{ \int_0^T \big\vert \sigma\big( t, X(t) \big) \big\vert_{\infty,2}^2 \, \mathrm{d}t = 0 \bigg\} \bigg). \\
\end{split}
\end{equation}
On the other hand, the limit \eqref{eq:laeppchen_almostsurely}, an easy generalization of Corollary~2 in Müller-Gronbach~\cite{tmg2002} regarding non-negative instead of strictly positive scalars, and \eqref{eq:ohrmuschel} yield
\begin{equation}
\label{eq:keineAhnung2}
\begin{split}
&\Prob\bigg( \bigg\{ \liminf_{n \to \infty} \alpha^{(N_{\kappa_n})} \ge 2^{-1/2} \cdot \bigg( \int_0^T \big\vert \sigma\big( t, X(t) \big) \big\vert_{\infty,2}^2 \, \mathrm{d}t \bigg)^{1/2} \bigg\} \\
&\hspace{0.55cm}\cap \bigg\{ \int_0^T \big\vert \sigma\big( t, X(t) \big) \big\vert_{\infty,2}^2 \, \mathrm{d}t > 0 \bigg\} \bigg) \\
&= \Prob\bigg( \bigg\{ \liminf_{n \to \infty} \alpha^{(N_{\kappa_n})} \ge 2^{-1/2} \cdot \bigg( \int_0^T \big\vert \sigma\big( t, X(t) \big) \big\vert_{\infty,2}^2 \, \mathrm{d}t \bigg)^{1/2} \bigg\} \\
&\hspace{0.95cm}\cap \bigg\{ \int_0^T \big\vert \sigma\big( t, X(t) \big) \big\vert_{\infty,2}^2 \, \mathrm{d}t > 0 \bigg\} \cap \bigg\{ \lim_{n \to \infty} \mathcal{A}_{k_{N_{\kappa_n}}} = \bigg( \int_0^T \big\vert \sigma\big( t, X(t) \big) \big\vert_{\infty,2}^2 \, \mathrm{d}t \bigg)^{1/2} \bigg\} \bigg) \\
&\ge \Prob\bigg( \Big\{ \lim_{n \to \infty} \delta_{N_{\kappa_n}} = \infty \Big\} \\
&\hspace{0.95cm}\cap \bigg\{ \int_0^T \big\vert \sigma\big( t, X(t) \big) \big\vert_{\infty,2}^2 \, \mathrm{d}t > 0 \bigg\} \cap \bigg\{ \lim_{n \to \infty} \mathcal{A}_{k_{N_{\kappa_n}}} = \bigg( \int_0^T \big\vert \sigma\big( t, X(t) \big) \big\vert_{\infty,2}^2 \, \mathrm{d}t \bigg)^{1/2} \bigg\} \bigg) \\
&= \Prob\bigg( \bigg\{ \int_0^T \big\vert \sigma\big( t, X(t) \big) \big\vert_{\infty,2}^2 \, \mathrm{d}t > 0 \bigg\} \bigg). \\
\end{split}
\end{equation}
Combining \eqref{eq:keineAhnung1} and \eqref{eq:keineAhnung2}, we conclude that
\[ \liminf_{n \to \infty} \alpha^{(N_{\kappa_n})} \ge 2^{-1/2} \cdot \bigg( \int_0^T \big\vert \sigma\big( t, X(t) \big) \big\vert_{\infty,2}^2 \, \mathrm{d}t \bigg)^{1/2} \]
holds almost surely. Consequently, Fatou's lemma gives
\begin{equation*}
\liminf_{n \to \infty} \big\Vert \alpha^{(N_{\kappa_n})} \big\Vert_{L_{2q/(q+2)}} \ge C_q^{\mathrm{ad}}.
\end{equation*}

Finally, employing Lemma \ref{lemma:appendixB_liminf_subsequence_argument} finishes the proof of this lemma.
\end{proof}


Next, we prove an asymptotic lower bound for the $N$th minimal errors in the classes of equidistant approximations.

\begin{Lemma}
\label{lemma:liminf_eq}
Assume the setting of Theorem~\ref{th:theorem_eq}. Then it holds that
\begin{equation}
\label{eq:liminf_eq}
\liminf_{N \to \infty} \big( N/ \log(N) \big)^{1/2} \cdot \inf\Big\{ e_q\big( \widehat{X} \big) \; \Big| \; \widehat{X} \in \mathbb{X}_N^{\mathrm{eq}} \Big\} \ge C_q^{\mathrm{eq}}.
\end{equation}
\end{Lemma}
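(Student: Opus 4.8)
The plan is to mimic, step for step, the proof of Lemma~\ref{lemma:liminf_ad}, simplified by the fact that in the equidistant case the set of evaluation sites of any $\widehat{X}\in\mathbb{X}_N^{\mathrm{eq}}$ is the deterministic grid $\{t_1^{(N)},\ldots,t_N^{(N)}\}$. Consequently no analogue of the budget‑distributing quantity $\delta_N$ is needed, and the conditional lower bound will involve exactly $N$ independent unit Brownian bridges, one per subinterval.

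First I would reduce the error of an arbitrary $\widehat{X}_N\in\mathbb{X}_N^{\mathrm{eq}}$ to a distance from the continuous‑time modified EM scheme $\widetilde{X}_N$: by the triangle inequality for $\Vert\cdot\Vert_\infty$ and $\Vert\cdot\Vert_{L_q}$ and the strong convergence~\eqref{eq:theorem_eq_strong_convergence},
\[
e_q\big(\widehat{X}_N\big) = \Big\Vert\big\Vert X-\widehat{X}_N\big\Vert_\infty\Big\Vert_{L_q} \ge \Big\Vert\big\Vert\widetilde{X}_N-\widehat{X}_N\big\Vert_\infty\Big\Vert_{L_q} - c\cdot N^{-1/2},
\]
and the subtracted term is negligible after multiplication by $(N/\log(N))^{1/2}$. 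Writing $D_N:=(\xi,W(t_1^{(N)}),\ldots,W(t_N^{(N)}))$ for the data available to $\widehat{X}_N$, which by~\eqref{eq:condExp} determines $\widehat{X}_N$, and noting that every $\widetilde{X}_N(t_\ell^{(N)})$ is $\sigma(D_N)$‑measurable by~\eqref{eq:recursiveStructure}, I would obtain the analogue of~\eqref{eq:newark},
\[
\widetilde{X}_N(t)-\E\big[\widetilde{X}_N(t)\,\big|\,D_N\big] = \sigma_N\big(t_\ell^{(N)},\widetilde{X}_N(t_\ell^{(N)})\big)\cdot\big(W(t)-\E\big[W(t)\,\big|\,D_N\big]\big)
\]
for $\ell\in\{0,\ldots,N-1\}$ and $t\in(t_\ell^{(N)},t_{\ell+1}^{(N)}]$. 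The conditional‑symmetry argument used in the proof of Lemma~\ref{lemma:liminf_ad} (the conditional law of $W$ given $D_N$ is symmetric, cf.\ Yaroslavtseva~\cite{yaroslavtseva2017}) then yields $\Vert\Vert\widetilde{X}_N-\widehat{X}_N\Vert_\infty\Vert_{L_q}\ge\Vert\Vert\widetilde{X}_N-\E[\widetilde{X}_N\,|\,D_N]\Vert_\infty\Vert_{L_q}$, so only the latter quantity has to be bounded from below.

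Conditionally on $D_N$, the increments $W(\cdot)-\E[W(\cdot)\,|\,D_N]$ restricted to the successive intervals $(t_\ell^{(N)},t_{\ell+1}^{(N)}]$ are independent $m$‑dimensional Brownian bridges of length $T/N$. Projecting onto the coordinate direction realising the norm $|\cdot|_{\infty,2}$ and proceeding as for inequality~(12) in Müller-Gronbach~\cite{tmg2002}, one gets, with $\beta_\ell^{(N)}:=|\sigma_N(t_\ell^{(N)},\widetilde{X}_N(t_\ell^{(N)}))|_{\infty,2}$,
\[
\E\Big[\big\Vert\widetilde{X}_N-\E[\widetilde{X}_N\,|\,D_N]\big\Vert_\infty^q\,\Big|\,D_N\Big] \ge (T/N)^{q/2}\cdot\mathcal{M}_q\big(\beta_0^{(N)},\ldots,\beta_{N-1}^{(N)}\big)\qquad\text{a.s.},
\]
so that, after combining the three reductions and taking expectations, Lemma~\ref{lemma:liminf_eq} follows once one establishes
\[
\liminf_{N\to\infty}\big(T/\log(N)\big)^{1/2}\cdot\Big(\E\big[\mathcal{M}_q(\beta_0^{(N)},\ldots,\beta_{N-1}^{(N)})\big]\Big)^{1/q}\ \ge\ C_q^{\mathrm{eq}}.
\]

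The main obstacle is precisely this last inequality. Using the elementary monotonicity $\mathcal{M}_q(\alpha_1,\ldots,\alpha_n)\ge(\min_{\ell\in S}\alpha_\ell)^q\,\mathcal{M}_q(\#S)$ for any $S\subseteq\{1,\ldots,n\}$ together with the known extreme‑value asymptotics $\mathcal{M}_q(n)=\big(\tfrac12\log n\big)^{q/2}(1+o(1))$, it suffices to show that, for large $N$, a proportion of order one of the weights $\beta_\ell^{(N)}$ lies close to $\sup_{t\in[0,T]}|\sigma(t,X(t))|_{\infty,2}$: if $\#\{\ell:\beta_\ell^{(N)}>\sup_t|\sigma(t,X(t))|_{\infty,2}-\varepsilon\}\ge cN$ holds eventually along a subsequence realising almost sure convergence of the relevant objects, then the displayed $\liminf$ is pathwise at least $(T/2)^{1/2}\big(\sup_t|\sigma(t,X(t))|_{\infty,2}-\varepsilon\big)^{+}$, and letting $\varepsilon\downarrow0$ and invoking Fatou's lemma gives $C_q^{\mathrm{eq}}$. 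Establishing this clustering is the delicate part: one uses that $t\mapsto|\sigma(t,X(t))|_{\infty,2}$ is continuous (Assumptions~(locL), (H)), hence exceeds its supremum minus $\varepsilon/2$ on an interval of positive length around its maximiser, on which $\Theta(N)$ grid points $t_\ell^{(N)}$ lie, and then upgrades the $L_q$‑convergence~\eqref{eq:theorem_eq_convergence_in_Lq} of the maximal weight, in combination with the strong convergence~\eqref{eq:theorem_eq_strong_convergence} of $\widetilde{X}_N$ to $X$, to the statement that $\beta_\ell^{(N)}$ is close to $|\sigma(t_\ell^{(N)},X(t_\ell^{(N)}))|_{\infty,2}$ at those sites. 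Exactly as in the proof of Lemma~\ref{lemma:liminf_ad}, the passage from this pathwise estimate to the claimed $\liminf$ of $L_q$‑norms is then carried out via the subsequence principle of Lemma~\ref{lemma:appendixB_liminf_subsequence_argument} and Fatou's lemma, following the reasoning around p.~682 of Müller-Gronbach~\cite{tmg2002}.
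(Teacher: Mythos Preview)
Your reduction steps — the triangle inequality against $\widetilde{X}_N$, the conditional‑symmetry argument to pass to $\E[\widetilde{X}_N\mid D_N]$, and the weighted Brownian‑bridge lower bound
\[
\E\Big[\big\Vert\widetilde{X}_N-\E[\widetilde{X}_N\mid D_N]\big\Vert_\infty^q\,\Big|\,D_N\Big]\ \ge\ (T/N)^{q/2}\,\mathcal{M}_q\big(\beta_0^{(N)},\ldots,\beta_{N-1}^{(N)}\big)
\]
— are exactly the paper's \eqref{eq:serve}–\eqref{eq:slice2}; this part is correct.

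The discrepancy is in the final extreme‑value step. The paper does not argue that a positive proportion of the weights $\beta_\ell^{(N)}$ clusters near the supremum. Instead, after extracting via \eqref{eq:theorem_eq_convergence_in_Lq} a subsequence along which $\max_\ell\beta_\ell^{(N)}\to\sup_{t}|\sigma(t,X(t))|_{\infty,2}$ almost surely, it applies (the non‑negative extension of) Corollary~2 of Müller-Gronbach~\cite{tmg2002} directly to the weighted functional and obtains
\[
(\log N)^{-1/2}\,\mathcal{M}_q^{1/q}\big(\beta_0^{(N)},\ldots,\beta_{N-1}^{(N)}\big)\ \xrightarrow{\ \text{a.s.}\ }\ 2^{-1/2}\sup_{t}|\sigma(t,X(t))|_{\infty,2},
\]
then concludes with Fatou's lemma and Lemma~\ref{lemma:appendixB_liminf_subsequence_argument}. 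No continuity of $t\mapsto|\sigma(t,X(t))|_{\infty,2}$ and no pointwise comparison of $\beta_\ell^{(N)}$ with $|\sigma(t_\ell^{(N)},X(t_\ell^{(N)}))|_{\infty,2}$ enters.

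Your alternative clustering argument has a genuine gap in the generality of Theorem~\ref{th:theorem_eq}: the hypotheses there do \emph{not} include \hyperlink{ass:locL}{(locL)} or \hyperlink{ass:H}{(H)}, and assumptions \eqref{eq:theorem_eq_strong_convergence}–\eqref{eq:theorem_eq_convergence_in_Lq} control only $\Vert X-\widetilde X_N\Vert_\infty$ and the \emph{maximum} of the weights. From these you cannot infer that $t\mapsto|\sigma(t,X(t))|_{\infty,2}$ is continuous, nor that individual $\beta_\ell^{(N)}$ are close to $|\sigma(t_\ell^{(N)},X(t_\ell^{(N)}))|_{\infty,2}$ (neither $\sigma$ nor $\sigma_N$ is assumed to have any regularity here). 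Your argument would go through under the stronger hypotheses of Corollaries~\ref{cor:Euler} or~\ref{cor:tamedEuler}, but not in the abstract setting of Theorem~\ref{th:theorem_eq}. The fix is to invoke the weighted extreme‑value result from~\cite{tmg2002} as the paper does, rather than re‑deriving it via a proportion argument that requires unavailable regularity.
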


\begin{proof}
Fix $N \in \N$ and $\widehat{X}_N \in \mathbb{X}_N^{\mathrm{eq}}$ for the moment, and let $D_N := \big( \xi, W(t_1^{(N)}), \ldots, W(t_N^{(N)}) \big)$ denote the data used by $\widehat{X}_N$.
Similarly to the estimates \eqref{eq:hammer}, \eqref{eq:amboss}, and \eqref{eq:slice} in the proof of Lemma~\ref{lemma:liminf_ad}, one successively shows that\begin{equation}
\label{eq:serve}
\begin{split}
e_q\big( \widehat{X}_N \big) &= \Big\Vert \big\Vert X - \widehat{X}_N \big\Vert_\infty \Big\Vert_{L_q} 
\ge \Big\Vert \big\Vert \widetilde{X}_N - \widehat{X}_N \big\Vert_\infty \Big\Vert_{L_q} - c \cdot N^{-1/2}, \\
\end{split}
\end{equation}
\begin{equation}
\label{eq:topspin}
\Big\Vert \big\Vert \widetilde{X}_N - \widehat{X}_N \big\Vert_\infty \Big\Vert_{L_q} \ge \Big\Vert \big\Vert \widetilde{X}_N - \E\big[ \widetilde{X}_N \, \big| \, D_N \big] \big\Vert_\infty \Big\Vert_{L_q},
\end{equation}
and that
\begin{equation}
\label{eq:slice2}
\begin{split}
&\E\Big[ \big\Vert \widetilde{X}_N - \E\big[ \widetilde{X}_N \, \big| \, D_N \big] \big\Vert_\infty^q \; \Big| \; D_N \Big] \ge (T/N)^{q/2} \cdot \mathcal{M}_q\big( \alpha_0^{(N)}, \ldots, \alpha_{N-1}^{(N)}\big)
\end{split}
\end{equation}
holds almost surely where
\[ \alpha_\ell^{(N)} := \big\vert \sigma_N\big( t_\ell^{(N)}, \widetilde{X}_N(t_\ell^{(N)}) \big) \big\vert_{\infty,2} \]
for $\ell \in \{ 0, \ldots, N-1 \}$.

Combining \eqref{eq:serve}, \eqref{eq:topspin}, and \eqref{eq:slice2} yields
\begin{equation}
\label{eq:doublefault}
\begin{split}
&\liminf_{N \to \infty} \big( N/ \log(N) \big)^{1/2} \cdot \inf\Big\{ e_q\big( \widehat{X} \big) \; \Big| \; \widehat{X} \in \mathbb{X}_N^{\mathrm{eq}} \Big\} \\
&\ge T^{1/2} \cdot \liminf_{N \to \infty} \Big\Vert \big( \log(N) \big)^{-1/2} \cdot \mathcal{M}_q^{1/q}\big( \alpha_0^{(N)}, \ldots, \alpha_{N-1}^{(N)} \big) \Big\Vert_{L_q}.
\end{split}
\end{equation}

Next, we again use the subsequence argument that is provided by Lemma \ref{lemma:appendixB_liminf_subsequence_argument} to conclude \eqref{eq:liminf_eq} from~\eqref{eq:doublefault}. First of all, assumption~\eqref{eq:theorem_eq_convergence_in_Lq} implies
\begin{equation}
\label{eq:ace}
\alpha^{(N)} := \max_{\ell \in \{ 0, \ldots, N-1 \}} \alpha_\ell^{(N)} \quad \xrightarrow[N \to \infty]{\Prob} \quad \sup_{t \in [0,T]} \big\vert \sigma\big( t, X(t) \big) \big\vert_{\infty,2}.
\end{equation}
Now let $(\alpha^{(N_\kappa)})_{\kappa \in \N}$ be a subsequence of $(\alpha^{(N)})_{N \in \N}$. In view of \eqref{eq:ace}, there exists a subsequence $(\alpha^{(N_{\kappa_n})})_{n \in \N}$ of $(\alpha^{(N_\kappa)})_{\kappa \in \N}$ such that
\begin{equation*}
\alpha^{(N_{\kappa_n})} \quad \xrightarrow[n \to \infty]{\text{a.s.}} \quad \sup_{t \in [0,T]} \big\vert \sigma\big( t, X(t) \big) \big\vert_{\infty,2}.
\end{equation*}
Again, an easy generalization of Corollary 2 in Müller-Gronbach~\cite{tmg2002} regarding non-negative instead of strictly positive scalars leads to
\begin{equation*}
\big( \log(N_{\kappa_n}) \big)^{-1/2} \cdot \mathcal{M}_q^{1/q}\big( \alpha_0^{(N_{\kappa_n})}, \ldots, \alpha_{N_{\kappa_n}-1}^{(N_{\kappa_n})} \big) \quad \xrightarrow[n \to \infty]{\text{a.s.}} \quad 2^{-1/2} \cdot \sup_{t \in [0,T]} \big\vert \sigma\big( t, X(t) \big) \big\vert_{\infty,2}.
\end{equation*}
Consequently, Fatou's lemma gives
\begin{equation*}
\begin{split}
&\liminf_{n \to \infty} \Big\Vert \big( \log(N_{\kappa_n}) \big)^{-1/2} \cdot \mathcal{M}_q^{1/q}\big( \alpha_0^{(N_{\kappa_n})}, \ldots, \alpha_{N_{\kappa_n}-1}^{(N_{\kappa_n})} \big) \Big\Vert_{L_q} \ge 2^{-1/2} \cdot \bigg\Vert \sup_{t \in [0,T]} \big\vert \sigma\big( t, X(t) \big) \big\vert_{\infty,2} \bigg\Vert_{L_q}.
\end{split}
\end{equation*}

Finally, employing Lemma \ref{lemma:appendixB_liminf_subsequence_argument} finishes the proof of this lemma.
\end{proof}


\subsection{Asymptotic upper bounds}
We start by introducing some notation that will be used in this subsection. For all $q \in [1,\infty)$, for all $N \in \N$, for all $\alpha_1, \ldots, \alpha_N \in [0,\infty)$, and for all independent real-valued Brownian bridges $B_1, \ldots, B_N$ on $[0,1]$ from $0$ to $0$ we put
\[ \mathcal{G}_q( \cdot \, ; \alpha_1, \ldots, \alpha_N) : \quad [0,\infty) \to [0,1], \quad u \mapsto \Prob\bigg( \bigg\{ \max_{\ell \in \{ 1, \ldots, N \}} \Big( \alpha_\ell \cdot \sup_{t \in [0,1]} |B_\ell(t)| \Big)^q > u \bigg\} \bigg), \] 
and
\[ \mathcal{G}_q( \cdot \, ; N ) := \mathcal{G}_q( \cdot \, ; \underbrace{1, \ldots, 1}_{N \text{ times}} ). \]

First, we prove an asymptotic upper bound for the errors of the adaptive modified EM schemes.

\begin{Lemma}
\label{lemma:limsup_ad}
Assume the setting of Theorem~\ref{th:theorem_ad}. Then it holds that
\[ \limsup_{N \to \infty} \Big( c\big( \widehat{X}_{N,q}^{\mathrm{ad}} \big) / \log\big( c\big( \widehat{X}_{N,q}^{\mathrm{ad}} \big) \big) \Big)^{1/2} \cdot e_q\big( \widehat{X}_{N,q}^{\mathrm{ad}} \big) \le C_q^{\mathrm{ad}}. \]
\end{Lemma}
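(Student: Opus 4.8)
The overall strategy mirrors the lower-bound proof of Lemma~\ref{lemma:liminf_ad} but in the opposite direction, and it exploits the explicit recursive construction of $\widehat{X}_{N,q}^{\mathrm{ad}}$. First I would split the error using the triangle inequality: by assumption~\eqref{eq:theorem_ad_strong_convergence},
\[
e_q\big( \widehat{X}_{N,q}^{\mathrm{ad}} \big) \le \Big\Vert \big\Vert X - \widetilde{X}_{k_N} \big\Vert_\infty \Big\Vert_{L_q} + \Big\Vert \big\Vert \widetilde{X}_{k_N} - \widehat{X}_{N,q}^{\mathrm{ad}} \big\Vert_\infty \Big\Vert_{L_q} \le c \cdot k_N^{-1/2} + \Big\Vert \big\Vert \widetilde{X}_{k_N} - \widehat{X}_{N,q}^{\mathrm{ad}} \big\Vert_\infty \Big\Vert_{L_q}.
\]
Since $N/(k_N \log N)\to 0$ forces $k_N^{-1/2}$ to be $o\big((\log c(\widehat{X}_{N,q}^{\mathrm{ad}})/c(\widehat{X}_{N,q}^{\mathrm{ad}}))^{1/2}\big)$ (using the bound $c(\widehat{X}_{N,q}^{\mathrm{ad}}) \le k_N + N \cdot \A_{k_N}^{2q/(q+2)}$ from~\eqref{eq:inequality_totalNumber_nachOben} together with $\A_{k_N}$ being bounded in $L_{2q/(q+2)}$ by~\eqref{eq:theorem_ad_convergence_in_Lq}), the first term is asymptotically negligible after multiplication by the normalizing factor. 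So everything reduces to controlling the second term.

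For the second term I would use that, on each subinterval $[t_\ell^{(k_N)}, t_{\ell+1}^{(k_N)}]$, the process $\widetilde{X}_{k_N}$ evolves affinely in $W$ with slope $\sigma_{k_N}(t_\ell^{(k_N)}, \widetilde{X}_{k_N}(t_\ell^{(k_N)}))$ (plus a drift term), and $\widehat{X}_{N,q}^{\mathrm{ad}}$ is its piecewise-linear interpolant on the refined grid $\tau_{\ell,\kappa}^{(k_N)}$ built using exactly the same coefficient values. Therefore $\widetilde{X}_{k_N} - \widehat{X}_{N,q}^{\mathrm{ad}}$, on each piece $[\tau_{\ell,\kappa}^{(k_N)}, \tau_{\ell,\kappa+1}^{(k_N)}]$, equals $\sigma_{k_N}(t_\ell^{(k_N)},\widetilde{X}_{k_N}(t_\ell^{(k_N)}))$ times a Brownian bridge over that piece (the drift cancels exactly under linear interpolation). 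Conditioning on $\widetilde{X}_{k_N}$ at the grid points $t_\ell^{(k_N)}$ and on the endpoint Brownian increments, the supremum norm is governed by $\mathcal{M}_q\big(\alpha_0, \ldots, \alpha_{\delta-1}\big)$-type quantities where the weights $\alpha$ are the $\vert\sigma_{k_N}(t_\ell^{(k_N)},\widetilde{X}_{k_N}(t_\ell^{(k_N)}))\vert_{\infty,2}$ repeated $\eta_\ell+1$ times and $\delta = \sum_\ell(\eta_\ell+1)$. Using the asymptotics of $\mathcal{M}_q$ for many i.i.d.\ weighted bridge suprema — the same generalization of Corollary~2 of Müller-Gronbach~\cite{tmg2002} invoked in Lemma~\ref{lemma:liminf_eq} — together with the definition~\eqref{eq:eta_l} of $\eta_\ell$ (which makes each weight-block of length $\eta_\ell+1$ proportional to $N\cdot\A_{k_N}^{2q/(q+2)}\cdot\vert\sigma\vert^2_{\infty,2}/\sum\vert\sigma\vert^2_{\infty,2}$, so that $\alpha_\ell^2/(\eta_\ell+1)\approx \A_{k_N}^2\cdot(N\A_{k_N}^{2q/(q+2)})^{-1}\sum_\iota\vert\sigma\vert^2/k_N\cdot(k_N/T)$ is essentially constant across $\ell$), one obtains
\[
\Big\Vert \big\Vert \widetilde{X}_{k_N} - \widehat{X}_{N,q}^{\mathrm{ad}} \big\Vert_\infty \Big\Vert_{L_q} \sim 2^{-1/2}\cdot\big(\log \delta_N\big)^{1/2}\cdot\big\Vert \A_{k_N}\cdot\delta_N^{-1/2}\big\Vert_{L_q}\cdot\big(\text{bridge constant}\big),
\]
and after inserting $\delta_N \approx N\cdot\A_{k_N}^{2q/(q+2)}$ and $c(\widehat{X}_{N,q}^{\mathrm{ad}})\sim\delta_N$ this collapses to $C_q^{\mathrm{ad}}$ via the $L_q$-convergence~\eqref{eq:theorem_ad_convergence_in_Lq} of $\A_{k_N}^2$ to $\int_0^T\vert\sigma(t,X(t))\vert^2_{\infty,2}\,\mathrm{d}t$ and a standard uniform-integrability/dominated-convergence argument (noting $\A_{k_N}$ is $L_{2q/(q+2)}$-bounded). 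I would organize this via the same subsequence device (Lemma~\ref{lemma:appendixB_liminf_subsequence_argument}, applied to $-$ the sequence, i.e.\ for $\limsup$), extracting an a.s.-convergent subsequence of $\A_{k_N}$ and applying the $\mathcal{M}_q$-asymptotics pathwise, then reassembling with Fatou in the reverse direction (i.e.\ a uniform-integrability upper bound) to pass from pathwise $\limsup$ to $\limsup$ of $L_q$-norms.

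The main obstacle I anticipate is the careful two-sided control of $\mathcal{M}_q\big(\alpha_0^{(N)},\ldots,\alpha_{\delta_N-1}^{(N)}\big)$ when the weights are not all equal but come in blocks of (possibly very different) sizes $\eta_\ell+1$, and when some weights vanish (the $\mathds{1}_{\{\A_{k_N}>0\}}$ and the set $L_N$ bookkeeping). One must show that the rescaled maximum behaves, to leading order, like $2^{-q/2}\cdot(\text{const})\cdot(\max_\ell \alpha_\ell^2/(\eta_\ell+1))^{q/2}\cdot(\log\delta_N)^{q/2}$ and that the floor function in~\eqref{eq:eta_l} together with the $k_N$ extra coarse-grid points does not spoil the asymptotic constant — precisely the delicate ``page 681'' estimates of Müller-Gronbach~\cite{tmg2002} that the lower-bound proof cited. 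I would handle the floor-induced discrepancy by comparing $\eta_\ell$ with its unrounded counterpart and absorbing the $O(1)$ error per block into the $o(\log\delta_N)$ regime, using $k_N/N\to 0$ to ensure the total rounding contribution and the $k_N$ coarse points are lower-order. The drift terms contribute only $O(N^{-1})$ per step uniformly (by the polynomial growth bounds and the moment bounds of Proposition~\ref{prop:EsupTildeX}) and are likewise negligible; I would dispatch these first so that the bridge analysis is clean.
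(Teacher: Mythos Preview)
Your decomposition via the triangle inequality and the identification of $\widetilde{X}_{k_N}-\widehat{X}_{N,q}^{\mathrm{ad}}$ as $\sigma_{k_N}$ times a piecewise Brownian bridge match the paper exactly, and your observation that the drift cancels under linear interpolation is correct (so there is in fact nothing to ``dispatch'' at the end). The departure comes in how the bridge term is bounded. Rather than working with $\mathcal{M}_q$-asymptotics plus a subsequence argument plus uniform integrability, the paper invokes the upper-bound analogue of equation~(25) in M\"uller-Gronbach~\cite{tmg2002}, which yields directly
\[
\E\Big[\big\Vert \widetilde{X}_{k_N} - \widehat{X}_{N,q}^{\mathrm{ad}} \big\Vert_\infty^q \,\Big|\, \big(\xi, W(t_1^{(k_N)}),\dots,W(t_{k_N}^{(k_N)})\big)\Big] \le \Big(\big(\log(\nu_{N,q}^{\mathrm{ad}})/N\big)^{1/2}\cdot 2^{-1/2}\cdot \A_{k_N}^{2/(q+2)}\Big)^q\cdot I_{\nu_{N,q}^{\mathrm{ad}}},
\]
where $I_n = 1 + d\,2^{q/2}\int_{2^{-q/2}}^\infty \mathcal{G}_q\big(u\,(\log n)^{q/2};n\big)\,\mathrm{d}u$ satisfies $I_n\to 1$ and $\sup_n I_n\le c$ by Lemma~2 of~\cite{tmg2002}. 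The point is that the specific design of $\eta_\ell$ in~\eqref{eq:eta_l} makes the block-weighted bridge maximum reduce to an \emph{unweighted} one with $\nu_{N,q}^{\mathrm{ad}}$ bridges, up to the explicit prefactor $\A_{k_N}^{2/(q+2)}$; this is exactly your ``$\alpha_\ell^2/(\eta_\ell+1)$ is essentially constant'' observation, but the paper exploits it through a sharp deterministic inequality rather than an asymptotic equivalence. Since $I_{\nu_{N,q}^{\mathrm{ad}}}$ is a.s.\ bounded and tends to~$1$, the $\limsup$ then follows from the $L_q$-convergence of $\A_{k_N}^{2/(q+2)}$ (a consequence of~\eqref{eq:theorem_ad_convergence_in_Lq}) together with the elementary estimate $\log(\nu_{N,q}^{\mathrm{ad}})\le \log N + \A_{k_N}^{2q/(q+2)}$ and the computation of $\lim_N c(\widehat{X}_{N,q}^{\mathrm{ad}})\log(N)/(N\log c(\widehat{X}_{N,q}^{\mathrm{ad}}))$ from~\eqref{eq:inequality_totalNumber_nachOben}--\eqref{eq:inequality_totalNumber_nachUnten}. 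No subsequence device and no Fatou/uniform-integrability step are needed. Your anticipated ``main obstacle'' (two-sided control of block-weighted $\mathcal{M}_q$, plus floor effects) is precisely what the $I_\nu$ route circumvents; your approach could be made to work, but it would require proving that upper $\mathcal{M}_q$-bound separately, whereas the paper gets it for free from the cited reference.
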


\begin{proof}
Fix $N \in \N$ with $1 < k_N \le N$ for the moment. Due to the triangle inequality and assumption~\eqref{eq:theorem_ad_strong_convergence}, it holds that
\begin{equation}
\label{eq:firstserve}
\begin{split}
e_q\big( \widehat{X}_{N,q}^{\mathrm{ad}} \big) &= \Big\Vert \big\Vert X - \widehat{X}_{N,q}^{\mathrm{ad}} \big\Vert_\infty \Big\Vert_{L_q} 
\le \Big\Vert \big\Vert \widetilde{X}_{k_N} - \widehat{X}_{N,q}^{\mathrm{ad}} \big\Vert_\infty \Big\Vert_{L_q} + c \cdot k_N^{-1/2}. \\
\end{split}
\end{equation}
Note that for all $\ell \in \{ 0, \ldots, k_N-1 \}$ and for all $t \in (t_\ell^{(k_N)},t_{\ell+1}^{(k_N)}]$ we have
\begin{equation*}
\begin{split}
\widetilde{X}_{k_N}(t) - \widehat{X}_{N,q}^{\mathrm{ad}}(t)
&= \sigma_{k_N}\big( t_\ell^{(k_N)}, \widetilde{X}_{k_N}(t_\ell^{(k_N)}) \big) \cdot \big( W(t) - \widehat{W}_N^{\mathrm{ad}}(t) \big)
\end{split}
\end{equation*}
where $\widehat{W}_N^{\mathrm{ad}} : \Omega \times [0,T] \to \R^m$ denotes the piecewise-linear interpolation of $W$ at the adaptive sites~\eqref{eq:adaptive_discretization}.
Recall the definitions~\eqref{eq:tildeA_kN} and \eqref{eq:eta_l} of $\A_{k_N}$ and $\eta_\ell$, respectively.
Almost identically to the proof of equation (25) in Müller-Gronbach~\cite{tmg2002}, one shows that
\begin{equation}
\label{eq:helpmeplease}
\begin{split}
&\E\left[ \big\Vert \widetilde{X}_{k_N} - \widehat{X}_{N,q}^{\mathrm{ad}} \big\Vert_\infty^q \, \middle| \, \big( \xi, W(t_1^{(k_N)}), \ldots, W(t_{k_N}^{(k_N)}) \big) \right] \\
&\le \Big( \big( \log\big( \nu_{N,q}^{\mathrm{ad}} \big) / N \big)^{1/2} \cdot 2^{-1/2} \cdot \A_{k_N}^{2/(q+2)} \Big)^q \cdot I_{\nu_{N,q}^{\mathrm{ad}}}
\end{split}
\end{equation}
holds almost surely where
\[ I_{\nu_{N,q}^{\mathrm{ad}}} := \bigg( 1 + d \cdot 2^{q/2} \cdot \int_{2^{-q/2}}^\infty \mathcal{G}_q\big(u \cdot \log( \nu_{N,q}^{\mathrm{ad}} )^{q/2}; \nu_{N,q}^{\mathrm{ad}} \big) \, \mathrm{d}u \bigg). \]
From this we conclude
\begin{equation}
\label{eq:secondserve}
\begin{split}
&\Big( c\big( \widehat{X}_{N,q}^{\mathrm{ad}} \big) / \log\big( c\big( \widehat{X}_{N,q}^{\mathrm{ad}} \big) \big) \Big)^{1/2} \cdot \Big\Vert \big\Vert \widetilde{X}_{k_N} - \widehat{X}_{N,q}^{\mathrm{ad}} \big\Vert_\infty \Big\Vert_{L_q} \\
&\le 2^{-1/2} \cdot \left( \frac{c\big( \widehat{X}_{N,q}^{\mathrm{ad}} \big)}{\log\big( c\big( \widehat{X}_{N,q}^{\mathrm{ad}} \big) \big)} \cdot \frac{\log(N)}{N} \right)^{1/2} \cdot \Big\Vert \big( \log( \nu_{N,q}^{\mathrm{ad}} ) / \log(N) \big)^{1/2} \cdot \A_{k_N}^{2/(q+2)} \cdot I_{\nu_{N,q}^{\mathrm{ad}}}^{1/q} \Big\Vert_{L_q}.
\end{split}
\end{equation}

Our main task now is to prove that the limit of the right-hand side of \eqref{eq:secondserve} is bounded above by $C_q^{\mathrm{ad}}$ as $N$ tends to infinity. To this end, note first that
\begin{equation}
\label{eq:zwischenserve1}
\A_{k_N}^{2/(q+2)} \quad \xrightarrow[N \to \infty]{L_q} \quad \bigg( \int_0^T \big\vert \sigma\big( t, X(t) \big) \big\vert_{\infty,2}^2 \, \mathrm{d}t \bigg)^{1/(q+2)}
\end{equation}
holds due to assumption \eqref{eq:theorem_ad_convergence_in_Lq}.
We next separately analyze the asymptotics of the two relevant terms appearing in the right-hand side of \eqref{eq:secondserve}. First, straightforward calculations using the estimates~\eqref{eq:inequality_totalNumber_nachOben} and \eqref{eq:inequality_totalNumber_nachUnten} along with the limits \eqref{eq:limits_kN} and \eqref{eq:zwischenserve1} show
\begin{equation}
\label{eq:zwischenserve2}
\begin{split}
\lim_{N \to \infty} \frac{c\big( \widehat{X}_{N,q}^{\mathrm{ad}} \big)}{\log\big( c\big( \widehat{X}_{N,q}^{\mathrm{ad}} \big) \big)} \cdot \frac{\log(N)}{N}
&= \lim_{N \to \infty} \frac{c\big( \widehat{X}_{N,q}^{\mathrm{ad}} \big) / N}{1 + \log\big( c\big( \widehat{X}_{N,q}^{\mathrm{ad}} \big) / N \big) / \log(N)} \\
&= \bigg\Vert \bigg( \int_0^T \big\vert \sigma\big( t, X(t) \big) \big\vert_{\infty,2}^2 \, \mathrm{d}t \bigg)^{1/2} \bigg\Vert_{L_{2q/(q+2)}}^{2q/(q+2)}.
\end{split}
\end{equation}
Second, observe that \eqref{eq:inequality_totalNumber_nachOben}, the inequality $\log(1+x) \le x$ for all $x \in (-1,\infty)$, the inequality $\sqrt{1+x} \le 1+\sqrt{x}$ for all $x \in [0,\infty)$, and the triangle inequality yield
\begin{equation}
\label{eq:helpX}
\begin{split}
&\Big\Vert \big( \log( \nu_{N,q}^{\mathrm{ad}} ) / \log(N) \big)^{1/2} \cdot \A_{k_N}^{2/(q+2)} \cdot I_{\nu_{N,q}^{\mathrm{ad}}}^{1/q} \Big\Vert_{L_q} \\
&\le \Big\Vert \big(1+ \A_{k_N}^{2q/(q+2)} / \log(N) \big)^{1/2} \cdot \A_{k_N}^{2/(q+2)} \cdot I_{\nu_{N,q}^{\mathrm{ad}}}^{1/q} \Big\Vert_{L_q} \\
&\le \Big\Vert \A_{k_N}^{2/(q+2)} \cdot I_{\nu_{N,q}^{\mathrm{ad}}}^{1/q} \Big\Vert_{L_q} + \Big\Vert \A_{k_N} \cdot I_{\nu_{N,q}^{\mathrm{ad}}}^{1/q}  \cdot \big( \log(N) \big)^{-1/2} \Big\Vert_{L_q} \\
\end{split}
\end{equation}
for all $N \in \N$ with $1 < k_N \le N$. Furthermore, note that
$\nu_{N,q}^{\mathrm{ad}}$ tends to infinity as $N$ tends to infinity due to \eqref{eq:inequality_totalNumber_nachUnten}. Hence, Lemma 2 in Müller-Gronbach~\cite{tmg2002} implies
\begin{equation}
\label{eq:fourthserve}
I_{\nu_{N,q}^{\mathrm{ad}}} \quad \xrightarrow[N \to \infty]{\text{a.s.}} \quad 1
\end{equation}
and that
\begin{equation}
\label{eq:fourthserve2}
\sup_{N \in \N} I_{\nu_{N,q}^{\mathrm{ad}}} \le c
\end{equation}
holds almost surely.
Combining \eqref{eq:zwischenserve1}, \eqref{eq:fourthserve}, and \eqref{eq:fourthserve2} gives
\begin{equation}
\label{eq:fifthserve}
\A_{k_N}^{2/(q+2)} \cdot I_{\nu_{N,q}^{\mathrm{ad}}}^{1/q} \quad \xrightarrow[N \to \infty]{L_q} \quad \bigg( \int_0^T \big\vert \sigma\big( t, X(t) \big) \big\vert_{\infty,2}^2 \, \mathrm{d}t \bigg)^{1/(q+2)}
\end{equation}
and
\begin{equation}
\label{eq:fifthserve2}
\A_{k_N} \cdot I_{\nu_{N,q}^{\mathrm{ad}}}^{1/q} \cdot \big( \log(N) \big)^{-1/2} \quad \xrightarrow[N \to \infty]{L_q} \quad 0.
\end{equation}

Finally, combining \eqref{eq:firstserve}, \eqref{eq:secondserve}, \eqref{eq:zwischenserve2}, \eqref{eq:helpX}, \eqref{eq:fifthserve}, \eqref{eq:fifthserve2}, and \eqref{eq:limits_kN} finishes the proof of this lemma.
\end{proof}


Next, we prove an asymptotic upper bound for the errors of the equidistant modified EM schemes.

\begin{Lemma}
\label{lemma:limsup_eq}
Assume the setting of Theorem~\ref{th:theorem_eq}. Then it holds that
\[ \limsup_{N \to \infty} \big( N / \log(N) \big)^{1/2} \cdot e_q\big( \widehat{X}_N^{\mathrm{eq}} \big) \le C_q^{\mathrm{eq}}. \]
\end{Lemma}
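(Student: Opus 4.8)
The plan is to follow the structure of the proof of Lemma~\ref{lemma:limsup_ad}, which in the equidistant setting simplifies considerably since the number of evaluation sites is deterministic. First I would split the error by the triangle inequality,
\[
e_q\big( \widehat{X}_N^{\mathrm{eq}} \big) = \Big\Vert \big\Vert X - \widehat{X}_N^{\mathrm{eq}} \big\Vert_\infty \Big\Vert_{L_q} \le \Big\Vert \big\Vert X - \widetilde{X}_N \big\Vert_\infty \Big\Vert_{L_q} + \Big\Vert \big\Vert \widetilde{X}_N - \widehat{X}_N^{\mathrm{eq}} \big\Vert_\infty \Big\Vert_{L_q},
\]
and observe that by assumption~\eqref{eq:theorem_eq_strong_convergence} the first summand is $O(N^{-1/2})$, so that $(N/\log(N))^{1/2}$ times it vanishes as $N \to \infty$; it thus remains to show that $\limsup_{N \to \infty} (N/\log(N))^{1/2} \cdot \Vert \Vert \widetilde{X}_N - \widehat{X}_N^{\mathrm{eq}} \Vert_\infty \Vert_{L_q} \le C_q^{\mathrm{eq}}$.

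For this I would use the recursion~\eqref{eq:recursiveStructure}: since $\widehat{X}_N^{\mathrm{eq}}$ coincides with $\widetilde{X}_N$ at the grid points $t_\ell^{(N)}$ and both are affine in $W$ on each $(t_\ell^{(N)}, t_{\ell+1}^{(N)}]$, one obtains for $t$ in that subinterval that
\[
\widetilde{X}_N(t) - \widehat{X}_N^{\mathrm{eq}}(t) = \sigma_N\big( t_\ell^{(N)}, \widetilde{X}_N(t_\ell^{(N)}) \big) \cdot \big( W(t) - \widehat{W}_N^{\mathrm{eq}}(t) \big),
\]
where $\widehat{W}_N^{\mathrm{eq}} : \Omega \times [0,T] \to \R^m$ denotes the piecewise-linear interpolation of $W$ at the equidistant sites~\eqref{eq:equidistant_discretization}. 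Conditionally on $D_N := ( \xi, W(t_1^{(N)}), \ldots, W(t_N^{(N)}) )$, the restrictions of $W - \widehat{W}_N^{\mathrm{eq}}$ to the subintervals are independent and, after the obvious rescaling, distributed as $(T/N)^{1/2}$ times independent standard $m$-dimensional Brownian bridges on $[0,1]$. Writing $\alpha_\ell^{(N)} := \vert \sigma_N( t_\ell^{(N)}, \widetilde{X}_N(t_\ell^{(N)}) ) \vert_{\infty,2}$ and proceeding almost identically to the proof of equation~(25) in Müller-Gronbach~\cite{tmg2002} (now with constant step size $T/N$), I would establish the almost sure bound
\[
\E\Big[ \big\Vert \widetilde{X}_N - \widehat{X}_N^{\mathrm{eq}} \big\Vert_\infty^q \,\Big|\, D_N \Big] \le \Big( \big( \log(N)/N \big)^{1/2} \cdot (T/2)^{1/2} \cdot \max_{\ell \in \{ 0, \ldots, N-1 \}} \alpha_\ell^{(N)} \Big)^q \cdot I_N,
\]
where $I_N := 1 + d \cdot 2^{q/2} \int_{2^{-q/2}}^\infty \mathcal{G}_q\big( u \cdot \log(N)^{q/2} ; N \big) \, \mathrm{d}u$ is a deterministic sequence; here $\mathcal{M}_q(\alpha_0^{(N)},\ldots,\alpha_{N-1}^{(N)}) \le (\max_\ell \alpha_\ell^{(N)})^q \cdot \mathcal{M}_q(N)$ and the asymptotics of $\mathcal{M}_q(N)$ enter.

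Taking $L_q$-norms and multiplying by $(N/\log(N))^{1/2}$ then yields
\[
\big( N/\log(N) \big)^{1/2} \cdot \Big\Vert \big\Vert \widetilde{X}_N - \widehat{X}_N^{\mathrm{eq}} \big\Vert_\infty \Big\Vert_{L_q} \le (T/2)^{1/2} \cdot I_N^{1/q} \cdot \Big\Vert \max_{\ell \in \{ 0, \ldots, N-1 \}} \alpha_\ell^{(N)} \Big\Vert_{L_q}.
\]
By Lemma~2 in Müller-Gronbach~\cite{tmg2002} one has $I_N \to 1$, and assumption~\eqref{eq:theorem_eq_convergence_in_Lq} gives $\Vert \max_{\ell} \alpha_\ell^{(N)} \Vert_{L_q} \to \Vert \sup_{t \in [0,T]} \vert \sigma(t, X(t)) \vert_{\infty,2} \Vert_{L_q}$; hence the right-hand side converges to $(T/2)^{1/2} \cdot \Vert \sup_{t \in [0,T]} \vert \sigma(t, X(t)) \vert_{\infty,2} \Vert_{L_q} = C_q^{\mathrm{eq}}$, which together with the first step finishes the proof. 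The main obstacle is the conditional estimate in the second step: carrying it out rigorously requires reproducing the asymptotic analysis from \cite{tmg2002} of the $q$th moment of the maximum of many independent rescaled Brownian bridges — in particular reducing the matrix--vector expression $\sigma_N \cdot ( W - \widehat{W}_N^{\mathrm{eq}})$ to scalar bridges via the norm $\vert \cdot \vert_{\infty,2}$, controlling the corresponding tail through $\mathcal{G}_q$, and checking that the coordinate count $d$ enters only through the remainder $I_N$, which still converges to $1$. All remaining steps are routine, and, in contrast to the adaptive case, no care is needed for random quantities such as $\nu_{N,q}^{\mathrm{ad}}$ and $\delta_N$.
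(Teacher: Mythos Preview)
Your proposal is correct and follows essentially the same approach as the paper's proof: the same triangle-inequality split via $\widetilde{X}_N$, the same identification of $\widetilde{X}_N - \widehat{X}_N^{\mathrm{eq}}$ as $\sigma_N(\cdot)$ times a Brownian-bridge process, the same conditional bound with the deterministic factor $I_N$, and the same appeal to Lemma~2 in \cite{tmg2002} together with assumption~\eqref{eq:theorem_eq_convergence_in_Lq} to pass to the limit. The paper presents the conditional estimate by reference to the analogous estimates~\eqref{eq:firstserve} and~\eqref{eq:helpmeplease} in the adaptive proof, exactly as you anticipate.
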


\begin{proof}
Fix $N \in \N$ with $N > 1$ for the moment. Similarly to the estimates~\eqref{eq:firstserve} and \eqref{eq:helpmeplease} in the proof of Lemma~\ref{lemma:limsup_ad}, one successively shows
\begin{equation}
\label{eq:test1}
\begin{split}
e_q\big( \widehat{X}_N^{\mathrm{eq}} \big) &= \Big\Vert \big\Vert X - \widehat{X}_N^{\mathrm{eq}} \big\Vert_\infty \Big\Vert_{L_q} 
\le \Big\Vert \big\Vert \widetilde{X}_N - \widehat{X}_N^{\mathrm{eq}} \big\Vert_\infty \Big\Vert_{L_q} + c \cdot N^{-1/2} \\
\end{split}
\end{equation}
and that
\begin{equation*}
\begin{split}
&\E\left[ \big\Vert \widetilde{X}_N - \widehat{X}_N^{\mathrm{eq}} \big\Vert_\infty^q \; \middle| \; \big( \xi, W(t_1^{(N)}), \ldots, W(t_N^{(N)}) \big) \right] \\
&\le \bigg( (T/2)^{1/2} \cdot \big( \log(N)/N \big)^{1/2} \cdot \max_{\ell \in \{ 0, \ldots, N-1 \}} \big\vert \sigma_N\big( t_\ell^{(N)}, \widetilde{X}_N(t_\ell^{(N)}) \big) \big\vert_{\infty,2} \bigg)^q \cdot I_N
\end{split}
\end{equation*}
holds almost surely where
\[ I_N := \bigg( 1 + d \cdot 2^{q/2} \cdot \int_{2^{-q/2}}^\infty \mathcal{G}_q\big(u \cdot \log(N)^{q/2}; N\big) \, \mathrm{d}u \bigg). \]
Thus, we conclude that
\begin{equation}
\label{eq:limsup}
\begin{split}
&\big( N / \log(N) \big)^{1/2} \cdot \Big\Vert \big\Vert \widetilde{X}_N - \widehat{X}_N^{\mathrm{eq}} \big\Vert_\infty \Big\Vert_{L_q} \\
&\le (T/2)^{1/2} \cdot \bigg\Vert \max_{\ell \in \{ 0, \ldots, N-1 \}} \big\vert \sigma_N\big( t_\ell^{(N)}, \widetilde{X}_N(t_\ell^{(N)}) \big) \big\vert_{\infty,2} \bigg\Vert_{L_q} \cdot I_N^{1/q}.
\end{split}
\end{equation}

As a final step, we show that the right-hand side of \eqref{eq:limsup} tends to $C_q^\mathrm{eq}$ as $N$ tends to infinity. To this end, note first that
\begin{equation}
\label{eq:test3}
\max_{\ell \in \{ 0, \ldots, N-1 \}} \big\vert \sigma_N\big( t_\ell^{(N)}, \widetilde{X}_N(t_\ell^{(N)}) \big) \big\vert_{\infty,2} \quad \xrightarrow[N \to \infty]{L_q} \quad \sup_{t \in [0,T]} \big\vert \sigma\big( t, X(t) \big) \big\vert_{\infty,2}
\end{equation}
holds due to assumption~\eqref{eq:theorem_eq_convergence_in_Lq}.
Moreover, Lemma~2 in Müller-Gronbach~\cite{tmg2002} gives
\begin{equation}
\label{eq:test2}
\lim_{N \to \infty} I_N = 1.
\end{equation}

Finally, combining \eqref{eq:test1}, \eqref{eq:limsup}, \eqref{eq:test3}, and \eqref{eq:test2} finishes the proof of this lemma.
\end{proof}

\section{Future Work}
\label{sec:FutureWork}

Throughout this paper, we studied strongly asymptotically optimal approximations with respect to the particular $q$th mean supremum error \eqref{eq:error_criterion}. Besides, the $q$th mean $L_q$ distance of an approximation $\widehat{X}$, given by
\[ \widetilde{e}_q\big( \widehat{X} \big) := \bigg( \E\bigg[ \int_0^T \sum_{i=1}^d \big| X_i(t) - \widehat{X}_i(t) \big|^q \, \mathrm{d}t \bigg] \bigg)^{1/q} \]
for $q \in [1,\infty)$, is another error measure commonly analyzed in the literature. For SDEs whose coefficients as well as their partial derivatives are globally Lipschitz continuous, Müller-Gronbach~\cite{tmgHabil} showed that specific Milstein schemes relating to adaptive and to equidistant time discretizations perform strongly asymptotically optimal in the classes of adaptive and of equidistant approximations, respectively. To generalize these results to a wider class of SDEs, it appears very promising to switch from Milstein schemes to suitable coefficient-modified Milstein schemes. 
The classical Milstein schemes as well as certain tamed Milstein schemes (similarly to the ones defined in Gan and Wang~\cite{ganwang2013} or Kumar and Sabanis~\cite{sabanistamedMilstein}) might then represent two exemplary applications of such new results. The whole approach described above may constitute the object of future studies.

\appendix
\section{Properties of the Solution Process and of
the Continuous-time Tamed Euler Schemes}
\label{sec:Appendix}

In this appendix, we provide useful properties of the solution process $(X(t))_{t \in [0,T]}$ and the continuous-time tamed Euler schemes $(\widetilde{X}_N(t))_{t \in [0,T]}$, $N \in \N$, with $(\mu_N)_{N \in \N}$ and $(\sigma_N)_{N \in \N}$ as per~\eqref{eq:muN_sigmaN_tamedEuler}. More precisely, we prove boundedness of certain moments of the suprema of these processes as well as strong convergence of order $1/2$ for the continuous-time tamed Euler schemes.

As before, we use $c$ to denote unspecified positive constants that may vary at every occurrence and that may only depend on $T$, $d$, $m$, and the parameters and constants from the assumptions used in the respective propositions.

First, we consider the supremum of the solution of the SDE \eqref{eq:SDE} and prove finiteness of specific moments of this random variable under quite weak assumptions.


\begin{Proposition}
\label{prop:EsupX}
Let the Assumptions \hyperlink{ass:I}{\normalfont (I$_{p}$)}, \hyperlink{ass:locL}{\normalfont (locL)}, \hyperlink{ass:K}{\normalfont (K$_{p}$)}, and \hyperlink{ass:pG}{\normalfont (pG$_r^{\sigma}$)} be satisfied for some $p \in [2,\infty)$ and $r \in [1,\infty)$ with $p \ge 2r$. Then it holds that
\begin{equation*}
\E\bigg[ \sup_{t \in [0,T]} \big| X(t) \big|^{p-2r+2} \bigg] < \infty.
\end{equation*}
\end{Proposition}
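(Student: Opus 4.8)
The plan is to establish the moment bound via a standard Itô-formula/Gronwall argument applied to the function $x \mapsto |x|^{p-2r+2}$, exploiting the Khasminskii-type condition \hyperlink{ass:K}{(K$_p$)} to control the drift term and the growth condition \hyperlink{ass:pG}{(pG$_r^\sigma$)} together with the hypothesis $p \ge 2r$ to control the diffusion term. Write $\rho := p - 2r + 2 \in [2,\infty)$ (note $\rho \ge 2$ since $p \ge 2r$). First I would apply Itô's formula to $|X(t)|^\rho = (|X(t)|^2)^{\rho/2}$, using the existence of the unique solution with $\sup_{t}\E[|X(t)|^p] < \infty$ guaranteed by \hyperlink{ass:I}{(I$_p$)}, \hyperlink{ass:locL}{(locL)}, \hyperlink{ass:K}{(K$_p$)} (as recalled after the assumptions in Section~\ref{sec:Setting}). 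This yields, modulo a local martingale term,
\[
|X(t)|^\rho = |\xi|^\rho + \int_0^t \tfrac{\rho}{2}|X(s)|^{\rho-2}\Big( 2 X(s)^\top \mu(s,X(s)) + |\sigma(s,X(s))|^2 \Big)\,\mathrm{d}s + (\rho-2)\text{-terms} + M(t),
\]
where the $(\rho-2)$-terms come from the second derivatives and are of the form $\tfrac{\rho}{2}(\tfrac{\rho}{2}-1)|X(s)|^{\rho-4}|\sigma(s,X(s))^\top X(s)|^2 \cdot (\text{const})$, hence bounded by $c\,|X(s)|^{\rho-2}|\sigma(s,X(s))|^2$.

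The key estimate is to bound the integrand. Using \hyperlink{ass:K}{(K$_p$)} in the form $2x^\top\mu(s,x) + (p-1)|\sigma(s,x)|^2 \le C(1+|x|^2)$, I would write $2x^\top\mu + |\sigma|^2 = \big(2x^\top\mu + (p-1)|\sigma|^2\big) - (p-2)|\sigma|^2 \le C(1+|x|^2)$ since $p \ge 2$ makes $-(p-2)|\sigma|^2 \le 0$. Thus the drift-plus-first-order part of the integrand is $\le c\,|X(s)|^{\rho-2}(1+|X(s)|^2) \le c(1 + |X(s)|^\rho)$. For the leftover second-order $|\sigma|^2$ term, \hyperlink{ass:pG}{(pG$_r^\sigma$)} gives $|\sigma(s,x)|^2 \le c(1+|x|^{2r})$, so $|X(s)|^{\rho-2}|\sigma(s,X(s))|^2 \le c\,|X(s)|^{\rho-2}(1+|X(s)|^{2r}) \le c(1 + |X(s)|^{\rho-2+2r}) = c(1 + |X(s)|^p)$ — and here is precisely where $\rho - 2 + 2r = p$ is used, so that the resulting exponent is exactly $p$, which has bounded moments uniformly in time. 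Taking suprema over $[0,t]$, then expectations, the drift contributes $\int_0^t c(1 + \E[\sup_{u\le s}|X(u)|^\rho])\,\mathrm{d}s + \int_0^t c(1 + \sup_u \E[|X(u)|^p])\,\mathrm{d}s$, the latter being finite by the recalled moment bound. For the martingale term I would use the Burkholder–Davis–Gundy inequality: $\E[\sup_{s\le t}|M(s)|] \le c\,\E\big[(\int_0^t |X(s)|^{2\rho-2}|\sigma(s,X(s))|^2\,\mathrm{d}s)^{1/2}\big]$, and since $|X(s)|^{2\rho-2}|\sigma(s,X(s))|^2 \le c(1+|X(s)|^{2\rho-2+2r}) = c(1+|X(s)|^{2(\rho-1)+2r})$ with $2(\rho-1)+2r = 2p - 2r + 2 - 2 + 2r = 2p$... wait, this needs care; I would instead bound the BDG integrand by $c\,\sup_{s\le t}|X(s)|^\rho \cdot \int_0^t |X(s)|^{\rho-2}|\sigma(s,X(s))|^2 \,\mathrm{d}s \cdot (\text{lower order})$ and apply the Young inequality $ab \le \varepsilon a^2 + \tfrac{1}{4\varepsilon}b^2$ to absorb an $\varepsilon\,\E[\sup_{s\le t}|X(s)|^\rho]$ term into the left-hand side, leaving a remainder controlled by $\int_0^t (1 + \sup_u\E[|X(u)|^p])\,\mathrm{d}s < \infty$.

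Putting these together and localizing properly (stopping times $\tau_M = \inf\{t : |X(t)| \ge M\}$ to legitimize the Itô formula and make the local martingale a true martingale, then letting $M \to \infty$ via Fatou), I obtain $\E[\sup_{s\le t}|X(s)|^\rho \wedge (\text{truncation})] \le c + c\int_0^t \E[\sup_{u\le s}|X(u)|^\rho \wedge (\cdot)]\,\mathrm{d}s$, and Gronwall's inequality closes the argument, yielding $\E[\sup_{t\in[0,T]}|X(t)|^{p-2r+2}] < \infty$. The main obstacle I anticipate is the bookkeeping around the BDG/Young step: one must make sure every auxiliary exponent appearing stays $\le p$ (so that $\sup_t \E[|X(t)|^p] < \infty$ applies) while simultaneously isolating exactly one factor of $\sup_{s\le t}|X(s)|^\rho$ to absorb — this is where the precise arithmetic $p \ge 2r$ and the choice $\rho = p-2r+2$ are doing real work, and the localization must be handled carefully since a priori only the $p$th moments at fixed times, not the supremum, are known to be finite.
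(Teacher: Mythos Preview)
Your proposal is correct and follows essentially the same route as the paper: localize with stopping times $\tau_n=\inf\{t:|X(t)|\ge n\}$, apply It\^o's formula, use \hyperlink{ass:K}{(K$_p$)} for the drift, handle the martingale part via BDG together with the splitting $|X|^{2\rho-2}|\sigma|^2 \le \sup|X|^{\rho}\cdot|X|^{\rho-2}|\sigma|^2$ and Young's inequality, then invoke \hyperlink{ass:pG}{(pG$_r^\sigma$)} so that the remaining exponent is exactly $\rho-2+2r=p$, which is controlled by the a~priori bound $\sup_t \E[|X(t)|^p]<\infty$; finally pass $n\to\infty$ via Fatou.

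Two cosmetic differences are worth noting. First, the paper applies It\^o's formula to $(1+|X|^2)^{\overline p/2}$ rather than $|X|^{\rho}$; this sidesteps the singularity of the Hessian of $|x|^{\rho}$ at the origin when $\rho<4$ and makes the computation cleaner. Second, the paper does not need Gronwall: since $\overline p\le p$, the drift integral $\int_0^T \E[(1+|X(s)|^2)^{\overline p/2}]\,\mathrm{d}s$ is already finite by the fixed-time moment bound, so after absorbing the $\tfrac12\,\E[\sup(\cdot)]$ from the Young step one can subtract directly and conclude. Your Gronwall route works as well, but it does slightly more work than necessary.
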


\begin{proof}
Put $\overline{p} := p - 2r + 2 \in [2,p]$. For each $n \in \N$, observe that the mapping
\[ \tau_n : \quad \Omega \to [0,T], \quad \omega \mapsto T \wedge \inf\big\{ t \in [0,T] \; \big| \; n \le | X(t,\omega) | \big\}, \]
is a stopping time that satisfies
\begin{equation}
\label{eq:finitenessOfStoppedSolution}
\sup_{t \in [0,T]} \big| X(t \wedge \tau_n) \big| \le \max\big\{ n, |\xi| \big\}
\end{equation}
almost surely.

Fix $n \in \N$ for the moment. Employing Itô's formula and Assumption~\hyperlink{ass:K}{\normalfont (K$_{p}$)} yields that almost surely we have
\begin{equation*}
\begin{split}
&\left( 1 + \big| X(t \wedge \tau_n) \big|^2 \right)^{\overline{p}/2} \\
&\le \left( 1 + \big| \xi \big|^2 \right)^{\overline{p}/2} + c \cdot \int_0^t \left( 1 + \big| X(s \wedge \tau_n) \big|^2 \right)^{\overline{p}/2} \, \mathrm{d}s \\
&\hspace{0.4cm}+ \overline{p} \cdot \int_0^t \mathds{1}_{\{ s \le \tau_n \}} \cdot \left( 1 + \big| X(s \wedge \tau_n) \big|^2 \right)^{(\overline{p}-2)/2} \cdot X(s \wedge \tau_n)^\top \cdot \sigma\big( s \wedge \tau_n, X(s \wedge \tau_n) \big) \, \mathrm{d}W(s)
\end{split}
\end{equation*}
for all $t \in [0,T]$. Thus, Assumption \hyperlink{ass:I}{\normalfont (I$_{p}$)}, Fubini's theorem, and the moments estimate \eqref{eq:supEX_finite} give
\begin{equation}
\label{eq:woohoo}
\begin{split}
&\E\bigg[ \sup_{t \in [0,T]} \left( 1 + \big| X(t \wedge \tau_n) \big|^2 \right)^{\overline{p}/2} \bigg] \\
&\le c + \overline{p} \cdot \E\bigg[ \sup_{t \in [0,T]} \int_0^t \mathds{1}_{\{ s \le \tau_n \}} \cdot \left( 1 + \big| X(s \wedge \tau_n) \big|^2 \right)^{(\overline{p}-2)/2} \\
&\hspace{3.395cm}\cdot X(s \wedge \tau_n)^\top \cdot \sigma\big( s \wedge \tau_n, X(s \wedge \tau_n) \big) \, \mathrm{d}W(s) \bigg].
\end{split}
\end{equation}
Next, observe that the Burkholder--Davis--Gundy inequality and the Cauchy--Schwarz inequality imply
\begin{equation}
\label{eq:hattrick}
\begin{split}
&\E\bigg[ \sup_{t \in [0,T]} \int_0^t \mathds{1}_{\{ s \le \tau_n \}} \cdot \left( 1 + \big| X(s \wedge \tau_n) \big|^2 \right)^{(\overline{p}-2)/2} \\
&\hspace{1.92cm}\cdot X(s \wedge \tau_n)^\top \cdot \sigma\big( s \wedge \tau_n, X(s \wedge \tau_n) \big) \, \mathrm{d}W(s)\bigg] \\
&\le 32^{1/2} \cdot \E\bigg[ \bigg( \int_0^T \mathds{1}_{\{ s \le \tau_n \}} \cdot \left( 1 + \big| X(s \wedge \tau_n) \big|^2 \right)^{\overline{p}-1} \cdot \big| \sigma\big( s \wedge \tau_n, X(s \wedge \tau_n) \big) \big|^2 \, \mathrm{d}s \bigg)^{1/2} \bigg]. \\
\end{split}
\end{equation}
Moreover, Assumption \hyperlink{ass:pG}{\normalfont (pG$_r^\sigma$)} and the inequality $\sqrt{x \cdot y} \le x/(2\rho) + y\rho/2$ for all $x,y \in [0,\infty)$ and $\rho \in (0,\infty)$ yield
\begin{equation}
\label{eq:hattrick2}
\begin{split}
&\E\bigg[ \bigg( \int_0^T \mathds{1}_{\{ s \le \tau_n \}} \cdot \left( 1 + \big| X(s \wedge \tau_n) \big|^2 \right)^{\overline{p}-1} \cdot \big| \sigma\big( s \wedge \tau_n, X(s \wedge \tau_n) \big) \big|^2 \, \mathrm{d}s \bigg)^{1/2} \bigg] \\
&\le c \cdot \E\bigg[ \bigg( \sup_{t \in [0,T]} \left( 1 + \big| X(t \wedge \tau_n) \big|^2 \right)^{\overline{p}/2} \\
&\hspace{1.7cm}\cdot \int_0^T \mathds{1}_{\{ s \le \tau_n \}} \cdot \left( 1 + \big| X(s \wedge \tau_n) \big|^2 \right)^{\overline{p}/2-1} \cdot \left( 1 + \big| X(s \wedge \tau_n) \big|^{2r} \right) \, \mathrm{d}s \bigg)^{1/2} \bigg] \\
&\le c \cdot \E\bigg[ \bigg( \sup_{t \in [0,T]} \left( 1 + \big| X(t \wedge \tau_n) \big|^2 \right)^{\overline{p}/2} \cdot \int_0^T \mathds{1}_{\{ s \le \tau_n \}} \cdot \left( 1 + \big| X(s \wedge \tau_n) \big|^2 \right)^{p/2} \, \mathrm{d}s \bigg)^{1/2} \bigg] \\
&\le \frac{1}{2 \cdot 32^{1/2} \cdot \overline{p}} \cdot \E\bigg[ \sup_{t \in [0,T]} \left( 1 + \big| X(t \wedge \tau_n) \big|^2 \right)^{\overline{p}/2} \bigg] \\
&\hspace{0.4cm}+ c^2 \cdot 32^{1/2} \cdot \overline{p}/2 \cdot \E\bigg[ \int_0^T \mathds{1}_{\{ s \le \tau_n \}} \cdot \left( 1 + \big| X(s \wedge \tau_n) \big|^2 \right)^{p/2} \, \mathrm{d}s \bigg].
\end{split}
\end{equation}
Note that
\begin{equation}
\label{eq:hattrick3}
\E\bigg[ \int_0^T \mathds{1}_{\{ s \le \tau_n \}} \cdot \left( 1 + \big| X(s \wedge \tau_n) \big|^2 \right)^{p/2} \, \mathrm{d}s \bigg] \le \E\bigg[ \int_0^T \left( 1 + \big| X(s) \big|^2 \right)^{p/2} \, \mathrm{d}s \bigg] \le c
\end{equation}
holds, again, due to Fubini's theorem and \eqref{eq:supEX_finite}.
Combining the inequalities \eqref{eq:woohoo}, \eqref{eq:hattrick}, \eqref{eq:hattrick2}, and \eqref{eq:hattrick3} shows
\begin{equation*}
\begin{split}
\E\bigg[ \sup_{t \in [0,T]} \left( 1 + \big| X(t \wedge \tau_n) \big|^2 \right)^{\overline{p}/2} \bigg] &\le \frac{1}{2} \cdot \E\bigg[ \sup_{t \in [0,T]} \left( 1 + \big| X(t \wedge \tau_n) \big|^2 \right)^{\overline{p}/2} \bigg] + c.
\end{split}
\end{equation*}
To subtract the first summand of the right-hand side from the left-hand side, we need to ensure that these quantities are actually not infinite. For this purpose, we employ \eqref{eq:finitenessOfStoppedSolution} and Assumption~\hyperlink{ass:I}{\normalfont (I$_{p}$)} to conclude that
\begin{equation*}
\E\bigg[ \sup_{t \in [0,T]} \left( 1 + \big| X(t \wedge \tau_n) \big|^2 \right)^{\overline{p}/2} \bigg] \le \E\bigg[ \left( 1+\max\big\{ n, |\xi| \big\}^2 \right)^{\overline{p}/2} \bigg] < \infty.
\end{equation*}
Hence, we obtain
\begin{equation}
\label{eq:halftime}
\E\bigg[ \sup_{t \in [0,T]} \big| X(t \wedge \tau_n) \big|^{\overline{p}} \bigg] \le \E\bigg[ \sup_{t \in [0,T]} \left( 1 + \big| X(t \wedge \tau_n) \big|^2 \right)^{\overline{p}/2} \bigg] \le c.
\end{equation}

Using Fatou's lemma, we derive from \eqref{eq:halftime} that
\begin{equation*}
\begin{split}
\E\bigg[ \sup_{t \in [0,T]} \big| X(t) \big|^{\overline{p}} \bigg] &= \E\bigg[ \lim_{n \to \infty} \sup_{t \in [0,T]} \big| X(t \wedge \tau_n) \big|^{\overline{p}} \bigg]
\le \liminf_{n \to \infty} \E\bigg[ \sup_{t \in [0,T]} \big| X(t \wedge \tau_n) \big|^{\overline{p}} \bigg]
\le c,
\end{split}
\end{equation*}
which finishes the proof of this proposition.
\end{proof}

Next, we show an analogous result on moment bounds for the continuous-time tamed Euler schemes.

\begin{Proposition}
\label{prop:EsupTildeX}
Let the Assumptions \hyperlink{ass:I}{\normalfont (I$_{p}$)}, \hyperlink{ass:locL}{\normalfont (locL)}, \hyperlink{ass:K}{\normalfont (K$_{p}$)}, and \hyperlink{ass:pG}{\normalfont (pG$_{r}^{\mu}$)} be satisfied for some $p \in [2,\infty)$ and $r \in [1,\infty)$ with $p \ge r+1$. Moreover, let the functions $(\mu_N)_{N \in \N}$ and $(\sigma_N)_{N \in \N}$ be given by \eqref{eq:muN_sigmaN_tamedEuler}. Then it holds that
\begin{equation}
\label{eq:supEsupTildeX}
\sup_{N \in \N} \E\bigg[ \sup_{t \in [0,T]} \big| \widetilde{X}_N(t) \big|^{p-r+1} \bigg] < \infty.
\end{equation}
\end{Proposition}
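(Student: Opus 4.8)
The plan is to first establish a bound on the moments of the scheme \emph{at the grid points} that is uniform in $N$, and then to transfer this to the continuous-time supremum via the Itô representation~\eqref{eq:ItoStructure}. Set $\overline{p} := p - r + 1$; since $r \ge 1$ and $p \ge r+1$ we have $\overline{p} \in [2,p]$, and the two arithmetic facts $\overline{p} \le p$ and $\overline{p} + r - 1 = p$ are what make the estimates below close. As a preliminary step I would record the elementary consequences of the hypotheses and of the taming. Writing $h := T/N$ and $g_N(x) := 1 + h^{1/2}|x|^r \ge 1$, Assumption (pG$_r^{\mu}$) gives $|\mu_N(t,x)| \le c\,(1+|x|^r)$ and, uniformly in $x$, $h^{1/2}|\mu_N(t,x)| \le c\,h^{1/2}$; Assumption (K$_p$) combined with (pG$_r^{\mu}$) and $|x^\top\mu(t,x)| \le |x|\,|\mu(t,x)|$ yields the diffusion growth bound $|\sigma(t,x)| \le c\,(1+|x|^{(r+1)/2})$, and hence $|\sigma_N(t,x)| \le c\,(1+|x|^{(r+1)/2})$ together with the uniform-in-$x$ estimate $h^{1/2}|\sigma_N(t,x)| \le c$.

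\textbf{Discrete moment bound.} Abbreviating $Y_\ell := \widetilde{X}_N(t_\ell^{(N)})$ for $\ell \in \{0,\dots,N\}$, the recursion~\eqref{eq:recursiveStructure} reads $Y_{\ell+1} = Y_\ell + \mu_N(t_\ell^{(N)},Y_\ell)\,h + \sigma_N(t_\ell^{(N)},Y_\ell)\,\Delta W_\ell$ with $\Delta W_\ell := W(t_{\ell+1}^{(N)}) - W(t_\ell^{(N)})$ independent of $\mathcal{F}(t_\ell^{(N)})$. The heart of the proof is a one-step estimate of the form
\[
\E\!\left[(1+|Y_{\ell+1}|^2)^{\overline{p}/2}\,\big|\,\mathcal{F}(t_\ell^{(N)})\right] \le (1 + c\,h)\,(1+|Y_\ell|^2)^{\overline{p}/2} + c\,h ,
\]
with $c$ independent of $N$ and $\ell$. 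To obtain it I would expand $(1+|Y_{\ell+1}|^2)^{\overline{p}/2}$ by the elementary inequality $(a+b)^{\theta} \le a^{\theta} + \theta a^{\theta-1}b + c_\theta\big(a^{\theta-2}b^2 + |b|^{\theta}\big)$ applied with $\theta = \overline{p}/2$, $a = 1+|Y_\ell|^2 \ge 1$, and $b = 2Y_\ell^\top(Y_{\ell+1}-Y_\ell) + |Y_{\ell+1}-Y_\ell|^2$, and then take the conditional expectation. In the first-order term the martingale part of $b$ vanishes and one is left with $\tfrac{\overline{p}}{2}(1+|Y_\ell|^2)^{\overline{p}/2-1}\big(2h\,Y_\ell^\top\mu_N + h\,|\sigma_N|^2 + h^2|\mu_N|^2\big)$ (coefficients evaluated at $(t_\ell^{(N)},Y_\ell)$); the crucial point is to keep $2Y_\ell^\top\mu_N$ and $|\sigma_N|^2$ together, insert (K$_p$), and use $g_N(Y_\ell) \ge 1$ together with $\overline{p}-1 \le p-1$ to conclude $2Y_\ell^\top\mu_N + |\sigma_N|^2 \le c\,(1+|Y_\ell|^2)$, while $h^2|\mu_N|^2 \le c\,h$, which bounds the first-order term by $c\,h(1+|Y_\ell|^2)^{\overline{p}/2} + c\,h$. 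The second-order contributions $c_\theta(a^{\theta-2}b^2 + |b|^{\theta})$ involve higher powers of the frozen increment $h\mu_N + \sigma_N\Delta W_\ell$ and are the delicate part: they have to be controlled by a careful interplay of the deterministic bounds $h^{1/2}|\mu_N(t_\ell^{(N)},Y_\ell)| \le c\,h^{1/2}$, $h^{1/2}|\sigma_N(t_\ell^{(N)},Y_\ell)| \le c$, the Gaussian moments $\E[|\Delta W_\ell|^{2k}\,|\,\mathcal{F}(t_\ell^{(N)})] = c_k h^k$, and the taming denominators $g_N(Y_\ell)$, which cap the superlinear growth of $\mu$ and $\sigma$; here the identity $\overline{p} + r - 1 = p$ is exactly what matches the degree of the worst superlinear terms to the available moments. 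This is essentially the moment estimate of Sabanis~\cite{sabanis2016} — with which our scheme coincides in the autonomous, $T=1$ case — adapted to the present time-dependent coefficients and to the moment order $\overline{p}$. Iterating the one-step estimate (discrete Grönwall) over $\ell \in \{0,\dots,N\}$ then gives $\max_{0 \le \ell \le N}\E[(1+|Y_\ell|^2)^{\overline{p}/2}] \le e^{cT}\big(1 + \E[(1+|\xi|^2)^{\overline{p}/2}]\big)$, which is finite and independent of $N$ by Assumption (I$_p$) and $\overline{p} \le p$.

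\textbf{From the grid to the supremum.} For $t \in [t_\ell^{(N)},t_{\ell+1}^{(N)}]$ we have, from~\eqref{eq:recursiveStructure}, $\widetilde{X}_N(t) = Y_\ell + \mu_N(t_\ell^{(N)},Y_\ell)(t-t_\ell^{(N)}) + \sigma_N(t_\ell^{(N)},Y_\ell)\big(W(t)-W(t_\ell^{(N)})\big)$, whence
\[
\sup_{t \in [0,T]}|\widetilde{X}_N(t)|^{\overline{p}} \le c\max_{0 \le \ell \le N}|Y_\ell|^{\overline{p}} + c\sum_{\ell=0}^{N-1}\Big( h^{\overline{p}}|\mu_N(t_\ell^{(N)},Y_\ell)|^{\overline{p}} + |\sigma_N(t_\ell^{(N)},Y_\ell)|^{\overline{p}}\!\!\sup_{t \in [t_\ell^{(N)},t_{\ell+1}^{(N)}]}\!\!|W(t)-W(t_\ell^{(N)})|^{\overline{p}}\Big).
\]
Taking expectations, the first term is handled by the discrete moment bound just established; in the sum one uses $h^{\overline{p}}|\mu_N(t_\ell^{(N)},Y_\ell)|^{\overline{p}} \le c\,h^{\overline{p}/2}$, the $\mathcal{F}(t_\ell^{(N)})$-measurability of $\sigma_N(t_\ell^{(N)},Y_\ell)$ and its independence of $\sup_{t\in[t_\ell^{(N)},t_{\ell+1}^{(N)}]}|W(t)-W(t_\ell^{(N)})|$ — whose $\overline{p}$-th moment is $\le c\,h^{\overline{p}/2}$ by Doob's inequality — together with the taming estimate for $|\sigma_N|^{\overline{p}}$ and the discrete moment bound; the factor $h^{\overline{p}/2}$ compensates the $N$-dependence coming from $|\sigma_N|^{\overline{p}}$, so that (using once more $\overline{p}+r-1 = p$ and $\overline{p} \ge 2$) the $N$ summands are of order $N^{-1}$ and the sum stays bounded uniformly in $N$. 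Alternatively, one may apply the Burkholder–Davis–Gundy inequality directly to the martingale part of~\eqref{eq:ItoStructure} and run a Grönwall argument for $\E[\sup_{s\le t}|\widetilde{X}_N(s)|^{\overline{p}}]$, bounding the drift and quadratic-variation integrals with the same taming/growth estimates.

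\textbf{Main obstacle.} The one-step estimate — more precisely, showing that the second-order increment terms are genuinely $O(h)$ per step uniformly in $N$, which requires exploiting the exact form of the taming denominators $1+(T/N)^{1/2}|x|^r$ in~\eqref{eq:muN_sigmaN_tamedEuler} against the at-most-$(r+1)/2$ and $r$ growth of $\sigma$ and $\mu$, and is where the hypothesis $p \ge r+1$ is used in an essential way — is the only nonroutine step; the growth bound for $\sigma$, the discrete Grönwall, and the passage to the supremum are standard.
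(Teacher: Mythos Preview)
Your one-step estimate at order $\overline{p}=p-r+1$ does not close as stated, and this is the crucial gap. After you expand $(1+|Y_{\ell+1}|^2)^{\overline{p}/2}$ and take the conditional expectation, the second-order remainder already contains a term of the form
\[
h\,(1+|Y_\ell|^2)^{\overline{p}/2-1}\,|\sigma_N(t_\ell^{(N)},Y_\ell)|^2
\;\le\; c\,h\,(1+|Y_\ell|^2)^{\overline{p}/2-1}\,(1+|Y_\ell|^{r+1})
\;\le\; c\,h\,(1+|Y_\ell|^2)^{p/2},
\]
where the exponent $p/2=(\overline{p}+r-1)/2$ is strictly larger than $\overline{p}/2$ whenever $r>1$. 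The taming does not help here: the uniform bound $h^{1/2}|\sigma_N|\le c$ kills the factor $h$ you need for the discrete Gr\"onwall, while the untamed bound $|\sigma_N|^2\le c(1+|x|^{r+1})$ overshoots the exponent. So you cannot get the claimed right-hand side $(1+ch)(1+|Y_\ell|^2)^{\overline{p}/2}+ch$ in a conditional (pathwise) estimate. Your phrase ``matches the degree of the worst superlinear terms to the available moments'' is exactly right---but the ``available moments'' are order $p$, not $\overline{p}$, and you never establish them. The same issue reappears in your grid-to-supremum step: the summand $h^{\overline{p}/2}|\sigma_N|^{\overline{p}}$ contributes a factor $(1+|Y_\ell|^{(r+1)\overline{p}/2})$, and $(r+1)\overline{p}/2>p$ unless $p=r+1$, so the sum over $\ell$ is not $O(1)$ uniformly in~$N$ from the $\overline{p}$-moment bound alone.

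The paper proceeds in two stages that are exactly designed to avoid this circularity. First, it invokes (essentially from Sabanis~\cite{sabanis2016}) the \emph{pointwise} $p$-th moment bound $\sup_{N}\sup_{t}\E[|\widetilde{X}_N(t)|^{p}]<\infty$; this is where the discrete/tamed machinery is really used, and it is run at the full order $p$ for which (K$_p$) is calibrated. Second, it applies the continuous It\^o formula to $(1+|\widetilde{X}_N|^2)^{\overline{p}/2}$---which has no discrete remainder---and uses BDG plus the already-established $p$-moment bound to control the cross term $(\widetilde{X}_N(s)-\widetilde{X}_N(\underline{s}_N))^\top\mu_N$ and the stochastic integral, closing a Gr\"onwall inequality for $\E[\sup_{u\le t}(1+|\widetilde{X}_N(u)|^2)^{\overline{p}/2}]$. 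Your ``alternative'' (BDG on~\eqref{eq:ItoStructure} and Gr\"onwall) is in fact the paper's route; what is missing from your outline is the prior $p$-moment input that makes it close.
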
 

\begin{proof}
First of all, note that in the given setting the growth condition~\hyperlink{ass:pG}{\normalfont (pG$_{(r+1)/2}^{\sigma}$)} also holds true. Our main idea of proof is to show \eqref{eq:supEsupTildeX} by means of Gronwall's lemma.

As a first step, observe that for each $N \in \N$ the continuous-time tamed Euler scheme $\widetilde{X}_{N}$ satisfies
\begin{equation}
\label{eq:appA_last}
\E\bigg[ \sup_{t \in [0,T]} \big| \widetilde{X}_{N}(t) \big|^{p} \bigg] < \infty
\end{equation}
due to the taming of the drift and the diffusion coefficients in its construction, cf. Remark~3 in Sabanis~\cite{sabanis2016}. Note that one can not guarantee at the moment that this bound holds uniformly in~$N$; yet, the estimate \eqref{eq:appA_last} ensures the finiteness needed for applying Gronwall's lemma later on.

The next step is to establish the moment bound
\begin{equation}
\label{eq:appA_supsupEwidehatX}
\sup_{N \in \N} \sup_{t \in [0,T]} \E\Big[ \big| \widetilde{X}_{N}(t) \big|^p \Big] \le c.
\end{equation}
This is shown in a completely analogous manner to Lemma~2 in Sabanis~\cite{sabanis2016}, and we therefore omit a proof.
 
We now turn to estimates which allow to apply Gronwall's lemma in a final step. Put $\overline{p} := p-r+1 \in [2,p]$ as well as $\underline{t}_N := \lfloor tN/T \rfloor \cdot T/N$ for $t \in [0,T]$ and $N \in \N$. Fix $t \in [0,T]$ and $N \in \N$ for the moment.
First, applying Itô's formula to the Itô process~\eqref{eq:ItoStructure} and employing Assumption~\hyperlink{ass:K}{\normalfont (K$_{p}$)} yield
\begin{equation}
\label{eq:appA_hilf1}
\begin{split}
&\E\bigg[ \sup_{u \in [0,t]} \left( 1+ \big| \widetilde{X}_{N}(u) \big|^2 \right)^{\overline{p}/2} \bigg] \\
&\le \E\Big[ \big( 1 + |\xi|^2 \big)^{\overline{p}/2} \Big] + c \cdot \E\bigg[ \int_0^t \left( 1 + \big| \widetilde{X}_{N}(s) \big|^2 \right)^{(\overline{p}-2)/2} \cdot \left( 1 + \big| \widetilde{X}_{N}(\underline{s}_N) \big|^2 \right) \bigg] \, \mathrm{d}s \\
&\hspace{0.4cm}+ \E\bigg[ \int_0^t \left( 1 + \big| \widetilde{X}_{N}(s) \big|^2 \right)^{(\overline{p}-2)/2} \\
&\hspace{1.9cm}\cdot \bigg| \big( \widetilde{X}_{N}(s) - \widetilde{X}_{N}(\underline{s}_N) \big)^\top \cdot \frac{\mu\big( \underline{s}_N, \widetilde{X}_{N}(\underline{s}_N) \big)}{1+(T/N)^{1/2} \cdot \big| \widetilde{X}_{N}(\underline{s}_N) \big|^r} \bigg| \, \mathrm{d}s \bigg] \\
&\hspace{0.4cm}+ \overline{p} \cdot \E\bigg[ \sup_{u \in [0,t]} \int_0^u \left( 1 + \big| \widetilde{X}_{N}(s) \big|^2 \right)^{(\overline{p}-2)/2} \\
&\hspace{3.5cm}\cdot \widetilde{X}_{N}(s)^\top \cdot \frac{\sigma\big( \underline{s}_N, \widetilde{X}_{N}(\underline{s}_N) \big)}{1+(T/N)^{1/2} \cdot \big| \widetilde{X}_{N}(\underline{s}_N) \big|^r} \, \mathrm{d}W(s) \bigg].
\end{split}
\end{equation}
By the Young inequality, we obtain
\begin{equation}
\label{eq:appA_hilf2}
\begin{split}
&\E\bigg[ \int_0^t \Big( 1 + \big| \widetilde{X}_{N}(s) \big|^2 \Big)^{(\overline{p}-2)/2} \cdot \Big( 1 + \big| \widetilde{X}_{N}(\underline{s}_N) \big|^2 \Big) \, \mathrm{d}s \bigg] \\
&\le \int_0^t \E\bigg[ \sup_{u \in [0,s]} \Big( 1 + \big| \widetilde{X}_{N}(u) \big|^2 \Big)^{\overline{p}/2} \bigg] \, \mathrm{d}s. \\
\end{split}
\end{equation}
Moreover, the Cauchy--Schwarz inequality, the triangle inequality, Assumption~\hyperlink{ass:pG}{\normalfont (pG$_{r}^\mu$)}, the Young inequality, and \eqref{eq:appA_supsupEwidehatX} give
\begin{equation}
\label{eq:appA_hilf3}
\begin{split}
&\E\bigg[ \int_0^t \left( 1 + \big| \widetilde{X}_{N}(s) \big|^2 \right)^{(\overline{p}-2)/2} \cdot \bigg| \big( \widetilde{X}_{N}(s) - \widetilde{X}_{N}(\underline{s}_N) \big)^\top \cdot \frac{\mu\big( \underline{s}_N, \widetilde{X}_{N}(\underline{s}_N) \big)}{1+(T/N)^{1/2} \cdot \big| \widetilde{X}_{N}(\underline{s}_N) \big|^r} \bigg| \, \mathrm{d}s \bigg] \\
&\le c \cdot \sup_{u \in [0,T]} \E\bigg[ \Big( 1 + \big| \widetilde{X}_{N}(u) \big|^2 \Big)^{p/2} \bigg] \\
&\le c.
\end{split}
\end{equation}
Similarly to the derivations of \eqref{eq:hattrick} and \eqref{eq:hattrick2} in Lemma~\ref{prop:EsupX}, one utilizes the Burkholder--Davis--Gundy inequality, the Cauchy--Schwarz inequality, the growth condition~\hyperlink{ass:pG}{\normalfont (pG$_{(r+1)/2}^\sigma$)}, the inequality $\sqrt{x \cdot y} \le x/(2\rho) + y\rho/2$ for all $x,y \in [0,\infty)$ and $\rho \in (0,\infty)$, the Young inequality, and \eqref{eq:appA_supsupEwidehatX} to show
\begin{equation}
\label{eq:appA_hilf4}
\begin{split}
&\E\bigg[ \sup_{u \in [0,t]} \int_0^u \left( 1 + \big| \widetilde{X}_{N}(s) \big|^2 \right)^{(\overline{p}-2)/2} \cdot \widetilde{X}_{N}(s)^\top \cdot \frac{\sigma\big( \underline{s}_N, \widetilde{X}_{N}(\underline{s}_N) \big)}{1+(T/N)^{1/2} \cdot \big| \widetilde{X}_{N}(\underline{s}_N) \big|^r} \, \mathrm{d}W(s) \bigg] \\
&\le \frac{1}{2 \cdot \overline{p}} \cdot \E\bigg[ \sup_{u \in [0,t]} \Big( 1 + \big| \widetilde{X}_{N}(u) \big|^2 \Big)^{\overline{p}/2} \bigg] + c \cdot \sup_{u \in [0,T]} \E\bigg[ \Big( 1 + \big| \widetilde{X}_{N}(u) \big|^2 \Big)^{p/2} \bigg] \, \mathrm{d}s \\
&\le \frac{1}{2 \cdot \overline{p}} \cdot \E\bigg[ \sup_{u \in [0,t]} \Big( 1 + \big| \widetilde{X}_{N}(u) \big|^2 \Big)^{\overline{p}/2} \bigg] + c. \\
\end{split}
\end{equation}
Combining \eqref{eq:appA_hilf1}, Assumption~\hyperlink{ass:I}{\normalfont (I$_{p}$)}, \eqref{eq:appA_hilf2}, \eqref{eq:appA_hilf3}, and \eqref{eq:appA_hilf4} yields
\begin{equation*}
\begin{split}
\E\bigg[ \sup_{u \in [0,t]} \Big( 1 + \big| \widetilde{X}_{N}(u) \big|^2 \Big)^{\overline{p}/2} \bigg] &\le c + c \cdot \int_0^t \E\bigg[ \sup_{u \in [0,s]} \Big( 1 + \big| \widetilde{X}_{N}(u) \big|^2 \Big)^{\overline{p}/2} \bigg] \, \mathrm{d}s. \\
\end{split}
\end{equation*}
Applying Gronwall's lemma finally finishes the proof of this proposition.
\end{proof}

Lastly, the following proposition states that the continuous-time tamed Euler schemes converge strongly to the solution of the SDE \eqref{eq:SDE} with order $1/2$.

\begin{Proposition}
\label{prop:dist_X_tildeX}
Fix $q \in [1,\infty)$ and let the Assumptions \hyperlink{ass:I}{\normalfont (I$_{p}$)}, \hyperlink{ass:H}{\normalfont (H)}, \hyperlink{ass:K}{\normalfont (K$_{p}$)}, \hyperlink{ass:M}{\normalfont (M$_{a}$)}, and \hyperlink{ass:pL}{\normalfont (pL$_r^\mu$)} be satisfied for some $p, a \in [2,\infty)$ and $r \in [0,\infty)$ such that $p \ge 4r + 2$ and $q < \min\{ a, p/(2r+1) \}$. Moreover, let the functions $(\mu_N)_{N \in \N}$ and $(\sigma_N)_{N \in \N}$ be given by \eqref{eq:muN_sigmaN_tamedEuler}. Then there exists $C \in (0,\infty)$ such that for all $N \in \N$ it holds that
\begin{equation*}
\bigg\Vert \sup_{t \in [0,T]} \big| X(t) - \widetilde{X}_N(t) \big| \bigg\Vert_{L_q} \le C \cdot N^{-1/2}.
\end{equation*}
\end{Proposition}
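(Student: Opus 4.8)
The plan is to follow the standard monotone-coefficient / tamed-scheme route in the spirit of Sabanis~\cite{sabanis2016}, adapted to the $q$th mean and to the specific modifications~\eqref{eq:muN_sigmaN_tamedEuler}. First I would record the auxiliary consequences of the hypotheses and collect the relevant moment bounds. Using~\hyperlink{ass:H}{\normalfont (H)} to bound $|\mu(t,0)|$ and $|\sigma(t,0)|$ uniformly in $t$, one gets from~\hyperlink{ass:pL}{\normalfont (pL$_r^\mu$)} that~\hyperlink{ass:pG}{\normalfont (pG$_{r+1}^\mu$)} holds; moreover, combining~\hyperlink{ass:M}{\normalfont (M$_a$)} with~\hyperlink{ass:pL}{\normalfont (pL$_r^\mu$)} — via $(a-1)\,|\sigma(t,x)-\sigma(t,y)|^2\le C|x-y|^2+2|x-y|\,|\mu(t,x)-\mu(t,y)|\le C|x-y|^2(1+|x|^r+|y|^r)$ — yields~\hyperlink{ass:pL}{\normalfont (pL$_{r/2}^\sigma$)} and hence~\hyperlink{ass:pG}{\normalfont (pG$_{(r+2)/2}^\sigma$)} and~\hyperlink{ass:locL}{\normalfont (locL)}. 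With these at hand, Proposition~\ref{prop:EsupX} gives $\E[\sup_{t\in[0,T]}|X(t)|^{p-r}]<\infty$, Proposition~\ref{prop:EsupTildeX} gives $\sup_{N\in\N}\E[\sup_{t\in[0,T]}|\widetilde{X}_N(t)|^{p-r}]<\infty$, and the interior estimate~\eqref{eq:appA_supsupEwidehatX} gives $\sup_{N\in\N}\sup_{t\in[0,T]}\E[|\widetilde{X}_N(t)|^{p}]<\infty$. A short computation using $p\ge 4r+2$ shows $q<p/(2r+1)\le p-r$, so $e_N:=X-\widetilde{X}_N$ satisfies $\E[\sup_{t\in[0,T]}|e_N(t)|^q]<\infty$ for every $N\in\N$; this finiteness is what makes Gronwall's lemma applicable below (after a routine localization at $\tau_R:=\inf\{t\in[0,T]:|X(t)|\vee|\widetilde{X}_N(t)|\ge R\}$ followed by Fatou's lemma as $R\to\infty$).

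Next I would establish one-step increment bounds: from the recursion~\eqref{eq:recursiveStructure} together with $|\mu_N(t,x)|\le c\,(N/T)^{1/2}(1+|x|)$ and $|\sigma_N(t,x)|\le|\sigma(t,x)|\le c\,(1+|x|^{(r+2)/2})$, one shows that $\Vert\widetilde{X}_N(s)-\widetilde{X}_N(\underline{s}_N)\Vert_{L_\theta}\le c\,(T/N)^{1/2}$ for $s\in[0,T]$ and suitable $\theta$, where $\underline{s}_N:=\lfloor sN/T\rfloor\,T/N$ — the tamed drift contributing $(T/N)\cdot(N/T)^{1/2}=(T/N)^{1/2}$ and the diffusion contributing $(T/N)^{1/2}\cdot\Vert 1+|\widetilde{X}_N(\underline{s}_N)|^{(r+2)/2}\Vert_{L_\theta}$, both controlled by the pointwise moment bounds from the first step. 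Then I would decompose, for $\varphi\in\{\mu,\sigma\}$, the coefficient error $\varphi(s,X(s))-\varphi_N(\underline{s}_N,\widetilde{X}_N(\underline{s}_N))$ into: (i) $\varphi(s,X(s))-\varphi(s,\widetilde{X}_N(s))$; (ii) $\varphi(s,\widetilde{X}_N(s))-\varphi(\underline{s}_N,\widetilde{X}_N(s))$, bounded by~\hyperlink{ass:H}{\normalfont (H)} and $(s-\underline{s}_N)^{1/2}\le(T/N)^{1/2}$; (iii) $\varphi(\underline{s}_N,\widetilde{X}_N(s))-\varphi(\underline{s}_N,\widetilde{X}_N(\underline{s}_N))$, bounded by~\hyperlink{ass:pL}{\normalfont (pL$_r^\mu$)} resp.~\hyperlink{ass:pL}{\normalfont (pL$_{r/2}^\sigma$)} and the one-step increment estimate; and (iv) the taming error $\varphi(\underline{s}_N,\widetilde{X}_N(\underline{s}_N))-\varphi_N(\underline{s}_N,\widetilde{X}_N(\underline{s}_N))$, bounded by $(T/N)^{1/2}|\varphi(\underline{s}_N,\widetilde{X}_N(\underline{s}_N))|\,|\widetilde{X}_N(\underline{s}_N)|^r\le c\,(T/N)^{1/2}(1+|\widetilde{X}_N(\underline{s}_N)|^{2r+1})$.

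I would then apply Itô's formula to $t\mapsto(\varepsilon+|e_N(t)|^2)^{q/2}$ (recall $e_N(0)=0$) and let $\varepsilon\downarrow0$ at the end. The parts~(i) of the drift and of the diffusion combine, through the monotonicity estimate — which~\hyperlink{ass:M}{\normalfont (M$_a$)} provides as~\hyperlink{ass:M}{\normalfont (M$_\theta$)} for every $\theta\in[2,a]$ and, when $q<2$, is used via~\hyperlink{ass:M}{\normalfont (M$_2$)} after discarding a nonpositive Itô correction — into a contribution bounded by $c\,(\varepsilon+|e_N(s)|^2)^{q/2}$; the Burkholder--Davis--Gundy inequality handles the local-martingale part, the resulting supremum being absorbed into the left-hand side. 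The remaining contributions~(ii)--(iv), collected into $P_N(s)$, enter as $c\,(\varepsilon+|e_N(s)|^2)^{(q-1)/2}|P_N(s)|+c\,(\varepsilon+|e_N(s)|^2)^{q/2-1}|P_N(s)|^2$ and are treated by Young's inequality $ab\le\tfrac{q-1}{q}a^{q/(q-1)}+\tfrac1q b^q$ and its square analogue: the factors $(\varepsilon+|e_N|^2)^{q/2}$ go into the Gronwall term and the remainder is $\le c\,\E[\int_0^T|P_N(s)|^q\,\mathrm{d}s]$. By the one-step estimates and the moment bounds this remainder is $\le c\,N^{-q/2}$, and Gronwall's lemma then yields $\E[\sup_{t\in[0,T]}|e_N(t)|^q]\le c\,N^{-q/2}$, which is the assertion.

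The main obstacle is the exponent bookkeeping in the last two steps. Every power of $X$ and of $\widetilde{X}_N$ that appears — most critically the power $q(2r+1)$ from the taming error~(iv) and the power $q(3r+2)/2$ from the frozen-argument error~(iii) — must stay below the thresholds $p$ (for pointwise-in-time moments) and $p-r$ (for suprema), and these are exactly the inequalities encoded by $p\ge 4r+2$ and $q<\min\{a,p/(2r+1)\}$ (note $(3r+2)/2\le 2r+1$ for $r\ge0$, so the taming bound dominates). Arranging the Young splittings and the $L_\theta$-exponents in the one-step increment estimates so that everything lands inside this admissible range, uniformly in $N$, is the delicate part; the Itô / monotonicity / Burkholder--Davis--Gundy / Gronwall skeleton itself is routine.
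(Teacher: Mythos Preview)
Your proposal is correct and follows essentially the same route as the paper, which simply states that the proof of Theorem~3 in Sabanis~\cite{sabanis2016} carries over and omits the details. Your sketch unpacks exactly that argument---deriving \hyperlink{ass:pG}{\normalfont (pG$_{r+1}^\mu$)}, \hyperlink{ass:pL}{\normalfont (pL$_{r/2}^\sigma$)}, and \hyperlink{ass:pG}{\normalfont (pG$_{(r+2)/2}^\sigma$)} from the hypotheses (as the paper does in the proof of Corollary~\ref{cor:tamedEuler}), invoking Propositions~\ref{prop:EsupX} and~\ref{prop:EsupTildeX} for the moment bounds, and then running the It\^o/monotonicity/BDG/Gronwall machinery with the four-term coefficient decomposition---so there is nothing to add.
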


\begin{proof}
Essentially, the proof of Theorem 3 in Sabanis~\cite{sabanis2016} carries over here and is therefore omitted.
\end{proof}

\bibliographystyle{amsplain}
\bibliography{\jobname}

\end{document}